\documentclass[11pt,a4paper]{amsart}

\usepackage[T1]{fontenc}
\usepackage[utf8]{inputenc}
\usepackage{graphicx}
\graphicspath{{figures_cropped/}}

\usepackage[final]{microtype}
\microtypesetup{expansion=false} 

\usepackage{amsmath,amssymb,amsfonts,amsthm,mathtools}

\usepackage[a4paper,margin=2.7cm]{geometry}

\usepackage[colorlinks=true,linkcolor=blue,citecolor=blue,urlcolor=blue]{hyperref}

\usepackage{graphicx}
\usepackage{tikz}

\usepackage{amsmath}
\usepackage{amssymb}
\usepackage{amsfonts}
\usepackage{amsthm}
\usepackage{mathtools}

\usepackage[a4paper,margin=2.7cm]{geometry}

\usepackage{graphicx}
\usepackage{tikz}
\usetikzlibrary{arrows.meta,decorations.markings,calc}

\usepackage{microtype}
\usepackage{enumitem}
\usepackage{hyperref}

\newtheorem{theorem}{Theorem}[section]
\newtheorem{lemma}[theorem]{Lemma}
\newtheorem{proposition}[theorem]{Proposition}

\newtheorem{claim}[theorem]{Claim}
\theoremstyle{definition}
\newtheorem{definition}[theorem]{Definition}
\newtheorem{example}[theorem]{Example}
\newtheorem{remark}[theorem]{Remark}

\title[ON MEROMORPHIC PENCILS, CUSP SINGULARITIES AND HOLOMORPHIC FOLIATIONS]{ON MEROMORPHIC PENCILS, CUSP SINGULARITIES AND HOLOMORPHIC FOLIATIONS IN THE COMPLEX PLANE}
\author{Bruno Scárdua}
\date{}

\email{bruno.scardua@gmail.com}

\keywords{holomorphic foliation; pencil; singularity; first integral; cusp 
singularity}
\date{}
\subjclass[2000]{Primary 37F75, 57R30; Secondary 32M25, 32S65.}

\thanks{Partial financial support was received from Fundação Getúlio Vargas (FGV-Rio de Janeiro), through the  The School of Applied Mathematics (EMAP-FGV).
The authors have no competing interests to declare that are relevant to the content of this article.
}

\begin{document}

\maketitle


\begin{abstract}

We study polynomial holomorphic $1$-forms in $\mathbb{C}^2$ that are homologically trivial along the fibers of meromorphic pencils of the form
$
\phi = \frac{f^p}{g^q},
$
where $f,g$ are holomorphic functions (possibly  polynomials) in general position and $(p,q)=1$.
We first establish a homological characterization of relative exactness: if a polynomial $1$-form $\Omega$ has vanishing periods along every closed path contained in the fibers $\phi_c$, then $\Omega$ decomposes as
$\Omega = a\omega_0 + dh,$ where $\omega_0 = p gdf - q f dg,$
for suitable polynomials $a$ and $h$. In the homogeneous case, degree constraints force $a$ to be constant.
We then apply this integration principle to foliations leaving invariant plane curve singularities of cusp type
\[
f^p + g^q = 0.
\]
Under a natural genericity (Morse type) condition, we prove a globalization theorem showing that homological triviality along the associated pencil implies that $\Omega$ is a polynomial cusp basic form,
\[
\Omega = d(f^p + g^q) + \lambda (p gdf - q fdg),
\qquad \lambda \in \mathbb{C}.
\]
In particular, such foliations admit Liouvillian first integrals of hypergeometric type.

Our results provide a bridge between relative cohomology, the geometry of rational pencils, and the analytic structure of cusp foliations, yielding explicit normal forms and first integrals under homological hypotheses.
\end{abstract}

\tableofcontents
\section{Introduction}
\label{section:intro}
This work is devoted to the study of pencils of holomorphic foliations. These
pencils we consider being given by meromorphic functions
$
\phi=\frac{f^p}{g^q},
$
where $f,g$ are holomorphic functions (sometimes polynomial or even homogeneous
polynomial) and $p,q\in\mathbb{N}$ are relatively prime natural numbers.
The indeterminacy points of $\phi$, i.e., the points where both $f$ and $g$
vanish, are dicritical singularities of the pencil. Near such a dicritical
singularity the pencil is described by the curves
$
\frac{f^p}{g^q}=c, \quad c\in\mathbb{C}\cup\{\infty\},
$
these are unions of cusp type centered at the dicritical singularity.
We study perturbations of pencils in dimension two.  The local case of the pencil $x/y=c$ has already been studied in \cite{ScarduaIJM}. Some  original ideas go back to \cite{Mucino}. We look for conditions that
may characterize the existence of a meromorphic first integral for the
perturbation. Special attention is paid to the case of analytic one-parameter
deformations
$\Omega_t=\omega_0+t\Omega_1,$
of order one in the complex parameter $t\in(\mathbb{C},0)$, where
$
\omega_0:=pg\,df-qf\,dg
$
and $\Omega_1$ is a holomorphic $1$-form.

Moved by this goal we revisit and look for possible variants of Ilyashenko's
PhD thesis \cite{Ilyashenko}. In his thesis Ilyashenko proves that, under some
generic conditions, a polynomial $1$-form
\[
\Omega=A(z,w)\,dz+B(z,w)\,dw, \qquad A,B\in\mathbb{C}[z,w],
\]
is exact iff its restrictions $\Omega|_{\phi_c}$ to the fibers
$
\phi_c=\{P(z,w)=c\},
$
where $P(z,w)$ is a generic (Morse type) polynomial, are all exact. Since
$\Omega|_{\phi_c}$ is already closed (because $\Omega$ and $\phi_c$ are holomorphic),
the exactness of $\Omega|_{\phi_c}$ is equivalent to its vanishing in the first
homology group of $\phi_c$. 
This last condition can be written as
$
\int_\gamma \Omega = 0,
$
for every closed path $\gamma\subset\phi_c$, $c\in\mathbb{C}\cup\{\infty\}$.

As in \cite{Ilyashenko}, a genericity condition is required. We shall need the following notion:
\begin{definition}[Morse type pair]
\label{definition:*generic}
Any pair $f,g \in \mathbb C[z,w]$ of irreducible polynomials  is said to be a  {\it Morse type pair} if:
\begin{enumerate}
\item[{\rm($\mathcal M.1$)}] the projective curves
\[
S_f:=\overline{\{f(z,w)=0\}}\subset\mathbb{CP}^2
\quad\text{and}\quad
S_g:=\overline{\{g(z,w)=0\}}\subset\mathbb{CP}^2
\]
 meet only at transverse
points.

\item[{\rm($\mathcal M.2$)}] The polynomials $f(z,w),g(z,w)$ are
{of   Morse type} in the sense that their
singularities are of Morse type and
each of them exhibits no saddle-connection. Moreover, 
at least one of the curves $S_f$ or $S_g$ does not connect two dicritical
singularities.
\end{enumerate}

\end{definition}

The notion of Morse type  pair is quite natural from the point of view of dynamical systems. With this notion we obtain the following version of Theorem~1 in \cite{Ilyashenko}.

\begin{theorem}[first integration lemma]
\label{theorem:firsintegration}
Let $f(z,w), g(z,w)$ be complex polynomials
$f,g\in\mathbb{C}[z,w]$, $p, q \in \mathbb N$ with no common factor.
Let $\Omega = A(z,w)\,dz + B(z,w)\,dw$
be a polynomial $1$-form in $\mathbb{C}^2$. 
Assume that the pair $f,g$ is Morse type. 
Also assume that $\int_\gamma \frac{\Omega}{f g}=0
$
for every closed path $\gamma\subset\phi_c$,
and every $c$, where $\phi:=f^p/g^q$.
Then there is a pair of polynomials
$a,h\in\mathbb{C}[z,w]$
such that
$\frac{\Omega}{f g}
=
a \frac{\omega_0}{fg} + dh.
$
\end{theorem}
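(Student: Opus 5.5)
Following Ilyashenko's scheme, the plan is to build a primitive of $\Omega/(fg)$ along the fibers, show it is a global rational function, and then peel off the $\omega_0$ part. Set $\eta:=\Omega/(fg)$. This is a rational $1$-form with poles only on $S_f\cup S_g$. Observe that $\omega_0/(fg)=p\,df/f-q\,dg/g=d\log\phi$, so $\omega_0/(fg)$ vanishes on every fiber $\phi_c$.

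\textbf{Step 1: a fiberwise primitive.} Since the periods of $\eta$ along every closed path in every fiber $\phi_c$ vanish, integration along fibers from a base point defines a fiberwise primitive. Concretely, I choose a local holomorphic section $\Sigma$ transverse to the generic fibers, for example a small disc where $d\phi\neq 0$ away from $f=g=0$. For $x\in\phi_c$, set
\[
H(x)=\int_{\sigma(c)}^{x}\eta ,
\]
where the path lies in $\phi_c$ and $\sigma(c)=\Sigma\cap\phi_c$. By hypothesis $H$ is well defined and single valued on the complement of the singular fibers, the critical values of $\phi$ and $f=g=0$. It is holomorphic there by analytic dependence on parameters. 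By construction $dH-\eta$ vanishes on $\ker d\phi$, so
\[
dH-\eta=b\,d\log\phi=b\,\frac{\omega_0}{fg}
\]
for some function $b$ holomorphic on that open set.

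\textbf{Step 2: extension and growth.} I will show that $H$ extends across the critical fibers and the points at infinity, and is meromorphic with controlled poles. Here the Morse type hypothesis is used:
\begin{itemize}
\item By ($\mathcal M.2$), the critical points of $f$ and $g$ are nondegenerate and there are no saddle connections. Hence near each critical fiber the vanishing cycles are simple, and the vanishing-period hypothesis kills any monodromy of $H$ around critical values (Picard--Lefschetz).
\item By ($\mathcal M.1$), the dicritical points $f=g=0$ are transverse intersections. Near each one the pencil is locally $x^p/y^q=c$, and $H$ is bounded on the separatrix-free sectors, so Riemann extension applies after one blow-up sequence.
\item The condition that one of $S_f,S_g$ does not connect two dicritical points is what I expect to prevent the choice of $\sigma(c)$ from creating a multivalued additive term along the fibers through infinity.
\end{itemize}
Growth estimates, with $|H|$ polynomially bounded in $|x|$ off the poles $fg=0$, then give $H\in\mathbb{C}(z,w)$ with poles only along $f,g$ and at infinity.

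\textbf{Step 3: polynomiality.} I will show that $H$ in fact has no poles along $f=0$ or $g=0$. Near a generic point of $S_f$, the restriction of $\eta$ to a fiber has at most a simple pole on $S_f$ with residue computed by a small loop. That loop lies in the fiber, because the fibers near $S_f$ have the form $f^p=cg^q$ and pass around a point of $S_f$. Its period vanishes, so $H$ has no logarithmic part. The remaining polar part must be of the form $R(\phi)$. Subtracting such a function of $\phi$ is allowed, since $d(R(\phi))=R'(\phi)\phi\,d\log\phi$ is absorbed into the $b$-term, and this leaves a polynomial $h$.

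\textbf{Step 4.} The form $\eta-dh$ is then proportional to $d\log\phi$, that is $\Omega-fg\,dh=a\,\omega_0$ with $a$ rational. Comparing the components along $df$ and $dg$, and using that $f$ and $g$ are coprime and irreducible together with the transversality in ($\mathcal M.1$), shows that $a$ is polynomial. Indeed $\omega_0$ vanishes only at the isolated points $f=g=0$ and at the critical points, so a possible pole of $a$ would be a curve along which $\Omega-fg\,dh$ would have to vanish identically.

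The main obstacle is Step 2, specifically the behaviour at infinity and at the dicritical points, where the fibers are not compact. The fact that the fibers of $\phi$ all pass through $f=g=0$ is exactly what Ilyashenko's setting lacks.
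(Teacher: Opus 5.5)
\textbf{There is a genuine gap.} It sits in Step~3, and already in the Riemann-extension bullet of Step~2. The polar part of your fiberwise primitive $H$ along $fg=0$ cannot be removed. In fact the statement as written is false, so no argument can close this gap.

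Take any Morse type pair with a transverse common zero $\xi_0\in\mathbb{C}^2$. This is exactly the situation your Step~2 and the paper's proof treat; for instance the local model $f=x$, $g=y$. Let $\Omega=df$. A direct check gives
\[
\frac{df}{fg}=\frac{1}{pg}\,\frac{\omega_0}{fg}+d\Big(-\frac{q}{pg}\Big).
\]
A fiber $\phi_c$ with $c\neq 0,\infty$ meets $fg=0$ only at base points, and these are the only fibers on which $\Omega/(fg)$ is defined. On such a fiber $\Omega/(fg)$ restricts to $d(-q/(pg))$ with a single-valued primitive, so every period vanishes. The conclusion, however, says $df=a\,\omega_0+fg\,dh$ with $a,h$ polynomial. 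The right-hand side vanishes at $\xi_0$, while $df(\xi_0)\neq 0$.

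In your construction this example gives $H=-q/(pg)+R(\phi)$. It is unbounded at $\xi_0$, so Riemann extension does not apply there. Its pole along $g=0$ is not a function of $\phi$, so the ``remaining polar part is $R(\phi)$'' step fails.

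The geometric claim in Step~3 is also wrong. Near a generic point of $S_f$, a fiber with $c\neq0,\infty$ consists of $p$ sheets $f=\zeta(cg^q)^{1/p}$ running parallel to $S_f$. It meets $S_f$ only at base points. So no loop in a fiber encircles a generic point of $S_f$; such a small loop lies in a transversal, not in a leaf. The only poles of $\eta|_{\phi_c}$ are at the base points. In the normalization $x=\alpha s^q$, $y=s^p$ one gets, for example, $dx/(xy)=q\,s^{-p-1}\,ds$, a pole of order $p+1$ with zero residue. Vanishing periods kill residues, not higher-order polar parts.

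The paper takes a different route:
\begin{enumerate}
\item a local solution $h_U$ at a dicritical point via Proposition~\ref{proposition:homology};
\item extension along a neighbourhood of $S_f$ through linearizable saddles;
\item Levi extension;
\item Saito--de Rham division.
\end{enumerate}
It breaks at the same point. Implication (i)$\Rightarrow$(iii) of Proposition~\ref{proposition:homology} fails for $\eta=dx/(xy)$: the loops $\gamma_{(x_0,y_0)}$ give zero periods, yet (iii) would force $dx=\tilde a\,\omega_0+xy\,d\tilde h$ near $0$, which is impossible at the origin.

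A correct statement must either allow $a$ and $h$ to have poles along $fg=0$, as in the identity above, or add hypotheses that exclude forms such as $df$.
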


 Ilyashenko goes deep   into the problem of persistence of a polynomial first integral
under one-parameter deformations
$
\Omega_t=dP+t\Omega_1
$
in connection with the persistence of a center type singularity for the deformation.
As a basic ingredient, it is proven in \cite{Ilyashenko} that $\Omega_t$ is exact of
the form $\Omega_t=dP_t$ provided that $\Omega_t$ vanishes identically in the first
homology of the fibers $\phi_c$ and that this is the case if a center singularity
$\xi_0$ of $dP$ persists as a center singularity $\xi_t$ of $\Omega_t$.
We prove some possible versions of the above results, replacing the polynomial
$P$ by a rational function $\phi=f^p/g^q$ or the germ of
meromorphic function  $\phi = x^p / y^q $ in a neighbourhood of the origin, $0 \in \mathbb{C}^2.$
The next result can be seen as a version of the "Destruction-of-center theorem" (Corollary~1) in \cite{Ilyashenko}.

\begin{theorem}[persistence theorem]
\label{theorem:persistence}
Let \(f,g \in \mathbb C[z,w]\) be a Morse type pair of homogeneous polynomials  and 
\(p,q \in \mathbb N\), \(\langle p,q \rangle = 1\).
Let $\Omega_t = \Omega_0 + t \Omega_1
$
be an analytic deformation of
$
\Omega_0 = p g df - q f dg
$
by polynomial 1-forms of degree
$
\deg \Omega_t = \deg f + \deg g - 1$, where $t$ is a complex parameter.

Assume that the singularity \(0\) is a dicritical singularity of \(\Omega_t\) for all \(t\).
Then \(\Omega_t\) admits a rational first integral, $\forall t$.
\end{theorem}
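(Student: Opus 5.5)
We will follow Ilyashenko's scheme: first turn the dicriticity hypothesis into vanishing of periods along the fibres of $\phi=f^p/g^q$, then use Theorem~\ref{theorem:firsintegration} to write the perturbation in normal form, and finally use homogeneity to produce a first integral.

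\textbf{Step 1 (reduction to first order).} Write $n=\deg f$ and $m=\deg g$. Since $f$ and $g$ are homogeneous, $\Omega_0$ has homogeneous coefficients of degree $n+m-1$, and it is dicritical at $0$. Its leaves near $0$ are the fibres $\phi_c$, all passing through the origin. We expand the holonomy, or first-return data, of $\Omega_t$ along cycles $\gamma\subset\phi_c$ in powers of $t$. The first-order term is the usual Melnikov--Pontryagin integral
\[
\int_\gamma \frac{\Omega_1}{fg}.
\]
The factor $1/(fg)$ appears because $\omega_0/(fg)=p\,df/f-q\,dg/g=d\log\phi$ is the closed logarithmic form defining the pencil. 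Dicriticity of $0$ for every $t$ should force this first-order obstruction to vanish. Indeed, a nonzero period would make leaves near the cycle spiral rather than close up through $0$, so a one-parameter family of separatrices through $0$ could not persist.

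This is the main obstacle. The fibres $\phi_c$ pass through the indeterminacy point, so the cycles are not compact curves in a punctured neighbourhood. We would blow up $0$ until $\phi$ becomes a fibration near the dicritical divisor, following the local pencil analysis of \cite{ScarduaIJM}, and then argue that dicriticity means the exceptional divisor is transverse to the foliation. This trivialises the holonomy of $\Omega_t$ in a neighbourhood of the divisor. Analytic continuation along the generic fibres, using that $\phi$ is a rational fibration on $\mathbb{CP}^2$ minus finitely many special fibres, then gives $\int_\gamma \Omega_1/(fg)=0$ for every closed path in every $\phi_c$.

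\textbf{Step 2 (normal form).} By Theorem~\ref{theorem:firsintegration}, which applies because the pair $f,g$ is Morse type, there are polynomials $a,h$ with $\Omega_1=a\,\omega_0+fg\,dh$. We compare degrees with $\deg\Omega_1\le n+m-1$. Decomposing into homogeneous parts, each graded piece of $a$ and $h$ must be absorbed, which forces $a$ to be constant and $h$ of degree $0$. We also use that $\Omega_0$ and $\Omega_1$ can be taken homogeneous of degree $n+m-1$. Here we need the fact mentioned in the abstract that in the homogeneous case $a$ is constant. In the same way $h$ is forced to be a rational function of degree $0$ in $(z,w)$, hence a function of $\phi$ or of $z/w$, constant on the relevant fibres. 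If $h$ is not constant, we redo the argument with the exact part written as a multiple of $\omega_0$ plus a homogeneous-degree-zero correction.

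\textbf{Step 3 (first integral).} Then $\Omega_t=(1+ta)\,\omega_0+t\,fg\,dh$, so $\Omega_t/(fg)=(1+ta)\,d\log\phi+t\,dh$. This form is closed with logarithmic poles, and its residues along $f=0$ and $g=0$ are $(1+ta)p$ and $-(1+ta)q$, in rational ratio. Hence after multiplying by a constant we obtain $d\log\bigl(f^p g^{-q}e^{th/(1+ta)}\bigr)$. The degree count of Step 2 makes $h$ constant along the leaves, or absorbs it into $\phi$, so $\phi$ (possibly modified) is a rational first integral of $\Omega_t$ for all $t$. The delicate point remains Step 1, namely converting dicriticity into vanishing of all periods.
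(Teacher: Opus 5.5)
Your route fails at Step 2, whatever happens to Step 1. Suppose you have $\Omega_1=a\,\omega_0+fg\,dh$ with polynomial $a,h$. Every homogeneous piece of $a\omega_0$ and of $fg\,dh$ has degree $\ge \deg f+\deg g-1$, so your own graded count gives $\Omega_1=a_0\,\omega_0$. Hence $\Omega_t=(1+ta_0)\,\omega_0$, and $\phi$ itself would be the first integral.

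That conclusion is false under the theorem's hypotheses. Take $f=x$, $g=y$, $p<q$, for example $(p,q)=(2,3)$. This is a Morse type pair: at $[0:1:0]\in S_f$ one has $\phi=u^pv^{q-p}$, which is non-dicritical. Take $\Omega_1=x\,dx$.
\begin{itemize}
\item Then $\Omega_t=(py+tx)\,dx-qx\,dy$ is linear with eigenvalues $-p,-q$ for every $t$, so $0$ is a dicritical node for every $t$.
\item On the fibre $x^p=cy^q$, parametrized by $x=\alpha s^q$, $y=s^p$, the form $\Omega_1/(xy)=dx/y$ becomes $\alpha q\,s^{q-p-1}ds$, which has zero residue. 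So every period your Step 1 is meant to produce does vanish.
\item Yet $x\,dx\neq a\,\omega_0+xy\,dh$ for any holomorphic $a,h$, as you see by pulling back to $\{y=0\}$. The actual decomposition is $dx/y=\frac{1}{p-q}\frac{x}{y}\,\frac{\omega_0}{xy}+d\bigl(\frac{q}{q-p}\frac{x}{y}\bigr)$, with a pole along $g=0$.
\item The first integral of $\Omega_t$ is $x^p/\bigl(y-\frac{t}{q-p}x\bigr)^q$, which is not a function of $x^p/y^q$.
\end{itemize}
So the chain ``dicritical $\Rightarrow$ vanishing periods $\Rightarrow$ polynomial normal form $\Rightarrow$ $\phi$ is a first integral'' cannot be completed. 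The decomposition you import from Theorem~\ref{theorem:firsintegration} does not exist here. In any case the factor $e^{th/(1+ta)}$ in Step 3 would not give a rational function for nonconstant $h$. Step 1 is also only heuristic: in general, dicriticity at $0$ controls at most the small loops around the origin, not arbitrary cycles of the global fibres.

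The paper's proof does not go through periods at all. It writes $\Omega_t/(fg)$ in the local normal form, keeping the logarithmic terms $\lambda_t\,df/f+\mu_t\,dg/g$, and discards $a_t-\mathrm{const}$ and $dh_t$ by a degree count. It then uses dicriticity to force the residue ratio $(a_tp+\lambda_t)/(a_tq-\mu_t)$ to be a positive rational $m_t/n_t$, which gives the first integral $f^{m_t}/g^{n_t}$. So dicriticity enters there as a rationality condition on residues, not as a vanishing condition. The same example shows that discarding $dh_t$ is not automatic in that route either: the local $h_t$ is $\frac{tq}{q-p}\frac{x}{y}$. So the first integral need not be built from $f,g$.

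What does settle the statement as written is elementary:
\begin{itemize}
\item A Morse type pair consists of irreducible polynomials, and an irreducible homogeneous polynomial in two variables is linear.
\item Hence $\deg\Omega_t=1$, and since $0$ is singular, $\Omega_t$ is a linear form.
\item A linear singularity is dicritical exactly when its matrix is diagonalizable with eigenvalue ratio in $\mathbb{Q}_{>0}$.
\item In that case a quotient $\ell_1^{a}/\ell_2^{b}$, with $a,b\in\mathbb{N}$, of powers of two of its invariant lines (which depend on $t$) is a rational first integral.
\end{itemize}
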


The connection with the theory of cusp singularities comes from the normal
form
\[
\Omega(b) = d(x^p + y^q) + b(x,y)(p y\,dx - q x\,dy)
\]
where $b(x,y)$ is a meromorphic function (\cite{Cerveau-Moussu}, \cite{CerveauLoray1998}).
The function $b(x,y)$ is holomorphic if and only if there is no other separatrix (\cite{Camacho-Sad}) than the
cusp $x^p +  y^q = 0$ and there are no saddle-nodes in the reduction of the
singularity (\cite{Cerveau-Moussu}).
We prove the existence of a (global) polynomial cusp type normal form associated with
a polynomial $1$-form having a cusp type singularity at $0 \in \mathbb{C}^2$
and vanishing in the first homology group of the fibers
$\phi_c = \left\{ {\phi(z,w)} = c \right\}.$


\begin{theorem}[first globalization theorem]
\label{theorem:firstglobalizationcusp}
Let $f,g\in\mathbb{C}[z,w]$ be homogeneous
polynomials constituting a Morse type pair.
Let
\[
\Omega = A(z,w)\,dz + B(z,w)\,dw
\]
be a polynomial $1$-form. Assume that $\Omega$ defines a cusp form in a neighborhood of the origin 
$0  \in \mathbb C^2$. 
Suppose that
\[
\int_\gamma \Omega = 0
\]
for every closed path
$
\gamma \subset
\phi_c\,, 
\quad
\forall c\in\mathbb{C}\cup\{\infty\}.
$
Then $\Omega$ is a polynomial cusp form 
\[
\Omega
=    d(f^p+g^q) + a(pgdf - q f dg)\,,
\]
for some $a\in \mathbb C[z,w]$.
\end{theorem}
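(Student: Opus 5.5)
We will reduce to the first integration lemma (Theorem~\ref{theorem:firsintegration}), but applied to $\Omega$ itself rather than to $\Omega/(fg)$. The vanishing of the periods of $\Omega$ along all closed paths in the fibers $\phi_c$ is the hypothesis of Ilyashenko type. The same homological argument (the relative cohomology argument behind Theorem~\ref{theorem:firsintegration}) should give a decomposition
\[
\Omega = b\,\omega_0 + dH, \qquad b,H\in\mathbb{C}[z,w],\quad \omega_0=p g\,df-qf\,dg .
\]
If one insists on quoting Theorem~\ref{theorem:firsintegration} literally, one applies it to $fg\,\Omega$. On each fiber with $c\neq 0,\infty$, the periods of $fg\,\Omega$ are expressed, after using $f^p=cg^q$, through those of $\Omega$. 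This step is delicate, so we prefer to invoke the integration principle directly in the form $\Omega=b\omega_0+dH$. The Morse type hypothesis is exactly what is needed to run it.

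Next we identify $H$. Since $\Omega$ defines a cusp form near $0$, the normal form of Cerveau--Moussu/Cerveau--Loray gives, after a local change of coordinates, the shape $\Omega = d(x^p+y^q)+\beta(x,y)(py\,dx-qx\,dy)$. In our homogeneous setting, $f$ and $g$ play the roles of $x$ and $y$. We would interpret the hypothesis ``$\Omega$ defines a cusp form'' in the strong sense that
\[
\Omega = U\,d(f^p+g^q) + \beta\,\omega_0
\]
locally, with $U$ a unit which may be normalized to $U\equiv1$. Comparing the two expressions gives $d\bigl(H-(f^p+g^q)\bigr)\wedge\omega_0=0$ locally. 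Hence $H-(f^p+g^q)$ is, locally and then globally, a function of the pencil $\phi=f^p/g^q$. Because it is polynomial and $\phi$ is a nonconstant rational function with indeterminacy at $0$, this function must be constant. Concretely, we would decompose into homogeneous components. The components of $dH$ of low degree are forced by the cusp form, since the fibers $\phi_c$ accumulate at the dicritical point $0$. The remaining terms of $H$, except for $f^p+g^q$, must be absorbed into $b\,\omega_0$. We then use $\omega_0$-divisibility, i.e.\ the de Rham--Saito division lemma, together with $dF\wedge\omega_0 = 0$ only for $F$ constant on the fibers. Absorbing the constant, we get $H=f^p+g^q$ up to a multiple absorbed in $b\,\omega_0$. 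Setting $a=b$ then yields the stated formula.

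The main obstacle is the middle step: making the passage from the local cusp normal form to the global identity $H=f^p+g^q$ rigorous. The difficulty is that a polynomial $H$ with $dH\equiv d(f^p+g^q)$ modulo $\omega_0$ could differ by any polynomial $F$ with $dF\wedge\omega_0=0$. The key point to check is that no nonconstant polynomial is a first integral of the pencil. This holds because a polynomial first integral of $\omega_0$ would be a function of $f^p/g^q$. Such a function is regular at $0$ only if it is constant, as it takes every value along the curves $\phi_c$ near $0$. Degree bookkeeping in the homogeneous case then handles the rest.
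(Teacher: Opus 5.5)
Your argument is correct under the reading you adopt, but its decisive step is not the one the paper uses for this theorem.

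Both proofs start from the global decomposition $\Omega=b\,\omega_0+dH$. This is exactly Theorem~\ref{theorem:secondinteglemma} applied to $\Omega$ itself. Your detour through $fg\,\Omega$ is not delicate either: Theorem~\ref{theorem:firsintegration} applied to $fg\,\Omega$ has literally your hypothesis and gives $\Omega=a\,\omega_0/(fg)+dh$. Then $fg\mid a$ because $\Omega-dh$ is polynomial.

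The two proofs diverge at the next step.
\begin{itemize}
\item The paper uses only invariance of the curve $\{f^p+g^q=0\}$. Since $\omega_0\wedge d(f^p+g^q)=pq\,(f^p+g^q)\,df\wedge dg$, invariance gives $(f^p+g^q)\mid dH\wedge d(f^p+g^q)$. After normalizing $H(0)=0$, this yields $H=(f^p+g^q)H_1$. A degree bound $\deg\Omega\le\deg(f^p+g^q)$ then forces $H_1$ to be constant, giving $\Omega=\lambda\,[d(f^p+g^q)+\mu\,\omega_0]$ with constants $\lambda,\mu$.
\item You instead compare with the normalized local form $\Omega=d(f^p+g^q)+\beta\,\omega_0$. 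You then kill $F=H-(f^p+g^q)$ by the dicritical argument: $dF\wedge\omega_0=0$, every fiber $\phi_c$ accumulates at $0$, and $F$ is continuous there. This is precisely how the paper, via Stein factorization, proves Theorem~\ref{theorem:secondglobalizationcusp}.
\end{itemize}

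Your route needs no degree hypothesis. None appears in this statement; the paper's proof borrows it from Theorem~\ref{theorem:secondglobalizationcusp}. You also land exactly on the stated form, with polynomial $a$ and coefficient $1$ on $d(f^p+g^q)$. The paper's route needs only the weaker, geometric hypothesis that the cusp curve is invariant. It pays for this with the degree bound and an undetermined factor $\lambda$.

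Two small corrections.
\begin{itemize}
\item A nonconstant unit $U$ cannot be normalized away, so $U\equiv 1$ must be part of the hypothesis. For example, with $f=z$, $g=w$, $p=2$, $q=3$, the exact form $2\,d(z^2+w^3)$ is a cusp form with vanishing periods that does not have the stated shape.
\item Your paragraph on homogeneous components and de Rham--Saito division is unnecessary. The dicritical argument alone shows $F$ is constant.
\end{itemize}
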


The next result assures the existence of a liouvillian first integral, of hypergeometrical type, 
for polynomial forms, with a homology vanishing condition and admitting a cusp invariant affine curve:

\begin{theorem}[second globalization theorem]
\label{theorem:secondglobalizationcusp}
Let $f,g\in\mathbb{C}[z,w]$ be homogeneous
polynomials in general position and forming a Morse type pair, \, 
$p,q\in\mathbb{N}$, $\langle p,q\rangle=1$.

Let
\[
\Omega = A(z,w)\,dz + B(z,w)\,dw
\]
be a polynomial $1$-form leaving invariant  the  cusp 
$f^p + g^q=0$ and let $\deg \Omega \leq \deg (f^p + g^q)$. 
Suppose that
\[
\int_\gamma \Omega = 0
\]
for every closed path
\[
\gamma \subset
\phi_c
=
\left\{
\frac{f(z,w)^p}{g(z,w)^q}=c
\right\},
\quad
\forall c\in\mathbb{C}\cup\{\infty\}.
\]

Then $\Omega$ is a polynomial cusp basic form 
\[
\Omega
=
d(f^p + g^q)
+
\lambda(p g\,df - q f\,dg),
\qquad
\lambda \in\mathbb C.
\]
In particular, $\Omega$ admits a liouvillian hypergeometric  first integral. 

\end{theorem}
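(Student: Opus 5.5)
The plan is to reduce the second globalization theorem to Theorem~\ref{theorem:firstglobalizationcusp}, and then use homogeneity and degree counting to force the coefficient $a$ to be a constant.

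Write $d_f=\deg f$ and $d_g=\deg g$. Invariance of the cusp $f^p+g^q=0$ only makes sense when $f^p$ and $g^q$ have the same degree, so I take $p\,d_f=q\,d_g=:N$; then $F:=f^p+g^q$ is homogeneous of degree $N$.

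The first task is to check that $\Omega$ defines a cusp form near $0$, so that Theorem~\ref{theorem:firstglobalizationcusp} applies. Since $f,g$ are in general position, the separatrices at $0$ of a form leaving $F=0$ invariant are controlled by the cusp, and the homology hypothesis excludes additional separatrices and saddle-nodes in the reduction. I would argue this via the Cerveau--Moussu normal form: write $\Omega=h\,dF+b\,\omega_0$ locally, with $\omega_0=p g\,df-q f\,dg$. If this local step causes trouble, a fallback is to use Theorem~\ref{theorem:firsintegration} directly. It applied to $\Omega'=fg\,\Omega$ gives $\Omega=a\,\omega_0+fg\,dh$ for polynomials $a,h$, and invariance of $F=0$ then constrains $h$ to be a function of $F$ up to multiples of $\omega_0$. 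I expect this step, establishing the cusp-form hypothesis (or the equivalent structure via the fallback), to be the main obstacle.

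Granting the cusp form, Theorem~\ref{theorem:firstglobalizationcusp} gives
\[
\Omega=dF+a\,\omega_0,\qquad a\in\mathbb C[z,w].
\]
Next I compare degrees. The coefficients of $dF$ have degree $N-1$. The form $\omega_0$ is homogeneous with coefficients of degree $d_f+d_g-1$, and it is not identically zero because $f,g$ are coprime and in general position. I decompose $a=\sum_j a_j$ into homogeneous components. Then $a_j\,\omega_0$ has coefficients homogeneous of degree $j+d_f+d_g-1$, and these pieces sit in distinct degrees for distinct $j$, so no cancellation occurs among them. The hypothesis $\deg\Omega\le N$ allows the top coefficient degree to be at most $N-1$ or $N$, depending on the convention for $\deg\Omega$. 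Hence every $a_j$ with $j+d_f+d_g>N$ vanishes, unless that component cancels against $dF$.

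Cancellation against $dF$ is impossible: $dF$ has no multiple of $\omega_0$ as a factor, since $dF\wedge\omega_0\neq0$ by general position, equivalently $F$ is not a first integral of $\omega_0$. So the only freedom left is in the low-degree components, $j\le N-d_f-d_g$. I would exclude the components with $j\ge1$ by invoking the homology hypothesis once more. On the fibers $\phi_c$, where $\omega_0$ restricts to zero, we have $\Omega|_{\phi_c}=dF|_{\phi_c}$, so the homology condition alone cannot kill $a$. The argument must instead come from the degree convention. Using the paper's convention that $\deg\Omega$ is the degree of the associated foliation, the bound $\deg\Omega\le\deg F$ should force $\deg a=0$, as in the homogeneous remark of the abstract. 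This gives $a=\lambda\in\mathbb C$.

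Finally, for the first integral, I would check that $\Omega=dF+\lambda\,\omega_0$ is a cusp basic form. Dividing by $F$ (or by $g^{q}$) and using $\omega_0/(fg)=d\log(f^p/g^q)$ gives an integrating factor. Setting $u=f^p/g^q$, the form becomes $g^q\big[(1+u)\,d\log g^q+du+\lambda\,u\,(fg)\,g^{-q}\,d\log u\big]$. Because of homogeneity, this reduces to a linear equation in $\log g$ whose coefficients are rational in $u$. Its solution is a Liouvillian first integral involving $\int u^{\alpha}(1+u)^{\beta}\,du$, an incomplete Beta, i.e.\ hypergeometric, function.
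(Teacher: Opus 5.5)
\textbf{The step that $a$ is constant is not proved, and it cannot be.} You correctly observe that the homology hypothesis cannot see $a$, since $\omega_0$ restricts to zero on every $\phi_c$. After that, the sentence ``the bound $\deg\Omega\le\deg F$ should force $\deg a=0$'' is only an assertion, and your own degree count shows why it fails: the bound removes only the high-degree components $a_j$. Homogeneity together with general position at $0$ forces $f,g$ to be independent linear forms. So $\deg\omega_0=1$ while $\deg F=\max(p,q)$, which leaves room for a non-constant $a$.

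Concretely, take $f=z$, $g=w$, $p=2$, $q=3$ (the pencil $z^2/w^3$ of the paper's own example, a Morse type pair) and
\[
\Omega=d(z^2+w^3)+z\,(2w\,dz-3z\,dw).
\]
This $\Omega$ satisfies every hypothesis:
\begin{itemize}
\item the cusp is invariant, since $dF\wedge\omega_0=-6F\,dz\wedge dw$;
\item all periods on the fibers vanish, by Lemma~\ref{lemma:homologyglobal};
\item $\Omega$ has degree $2<3$;
\item it is even a normalized cusp form at $0$.
\end{itemize}
Yet $a=z$ is not constant. The paper closes this step by citing Lemma~\ref{lemma:aisconstant}, but that lemma assumes $\Omega=a\,\omega_0$ and $\deg\Omega=\deg\omega_0$, and neither holds here. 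So the paper's proof has the same hole. What the hypotheses, plus a local normal form at $0$, actually give is $\Omega=dF+a\,\omega_0$ with $a\in\mathbb{C}[z,w]$ and $\deg a\le\deg F-\deg\omega_0$. That is the conclusion of Theorem~\ref{theorem:firstglobalizationcusp}, not a constant $\lambda$.

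\textbf{Your first step has a similar problem.} The reduction to Theorem~\ref{theorem:firstglobalizationcusp} needs $\Omega$ to be a normalized cusp form $dF+b\,\omega_0$ at $0$, and this does not follow from invariance of the cusp. Both $2\,dF$ and $\omega_0$ itself leave $F=0$ invariant, have vanishing periods and satisfy the degree bound, yet neither is of the form $dF+\lambda\omega_0$. Moreover, $\omega_0$ is dicritical at $0$, so the homology hypothesis does not exclude extra separatrices as you claim. The paper does not derive the local normal form either; it takes it as given. Then, instead of invoking Theorem~\ref{theorem:firstglobalizationcusp}, it compares that form with the global decomposition $\Omega=a\,\omega_0+dh$ from Theorem~\ref{theorem:secondinteglemma} to get $a=b$ and $h=F$.

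Two smaller points:
\begin{itemize}
\item Your fallback misreads Theorem~\ref{theorem:firsintegration}. Applied to $fg\,\Omega$ it yields $\Omega=a\,\omega_0/(fg)+dh$, not $\Omega=a\,\omega_0+fg\,dh$; the relevant tool is Theorem~\ref{theorem:secondinteglemma}.
\item The assumption $p\deg f=q\deg g$ is not forced by invariance. Since $f,g$ must be linear, it would force $p=q=1$ and exclude every genuine cusp.
\end{itemize}
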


\section{The Cohomological equation}
\label{section:cohomoeq}
In this preparatory section we study the cohomological equation in the local case with meromorphic first integral of type $x^p/y^q$. More precisely,  we consider a pair of relative prime numbers $p,q\in\mathbb N$, $\langle p,q\rangle=1$, local coordinates $(x,y)$ at the origin $0\in\mathbb C^2$,
 the $1$-form $\omega_0 = p y\,dx - q x\,dy$,
and   a meromorphic $1$-form $\eta=\eta(x,y)$, with simple poles, contained in $(xy=0)$ and defined in a neighborhood of $0\in\mathbb C^2$.
Write $\eta
=
\sum_{n,m\ge1} a_{nm} x^n y^m \, dx
+
\sum_{n,m\ge1} b_{nm} x^n y^m \, dy
$
and
$
h
=
\sum_{n,m\ge0} h_{nm} x^n y^m.
$
We shall consider the {\it cohomological equation} 
\begin{equation}
\label{equation:cohomological}
\eta\wedge\omega_0 = dh\wedge\omega_0,
\end{equation}
for some meromorphic function $h(x,y)$ defined in a neighborhood of $0\in\mathbb C^2$. Similar situations have been studied in \cite{[Cerveau--Berthier]}, \cite{ScarduaIJM} and \cite{LeonScarduaJS}. 
Let us do some calculations with power series.

\[
\eta \wedge \omega_0
=
\left(
\sum_{n,m\ge1} a_{nm} x^n y^m dx
+
\sum_{n,m\ge1} b_{nm} x^n y^m dy
\right)
\wedge
(py\,dx - qx\,dy)
\]

\[
=
\sum_{n,m\ge1}
-q a_{nm} x^{n+1} y^m dx\wedge dy
-
\sum_{n,m\ge1}
p b_{nm} x^n y^{m+1} dx\wedge dy
+ (q a_{1,0}+p b_{0,1})
\]

\[
=
-
\left[
q \sum_{m\ge0}\sum_{n\ge1}
a_{n-1,m} x^n y^m dx\wedge dy
+
p \sum_{n\ge0}\sum_{m\ge1}
b_{n,m-1} x^n y^m dx\wedge dy
\right]
-
(q a_{1,0}+p b_{0,1})
\]

\bigskip

Let us write

\[
\varphi(x)
=
\sum_{m\ge0} a_{m-1,-1} x^m,
\qquad
\psi(y)
=
\sum_{m\ge0} b_{-1,m-1} y^m .
\]

\[
\eta\wedge \omega_0
=
-
\left[
q y^{-1}\varphi(x)
+
p x^{-1}\psi(y)
+
\sum_{n,m\ge0}
(q a_{n-1,m} + p b_{n,m-1}) x^n y^m
\right]
dx\wedge dy
\]

\bigskip

\[
\varphi(x)
=
a_{-1,-1}x^0
+
\sum_{m\ge1} a_{m-1,-1}x^m
=
a_{-1,-1} + \sum_{m\ge1} a_{m-1,-1}x^m
\]

\[
\psi(y)
=
b_{-1,-1}y^0
+
b_{-1,0}y^1
+
\sum_{m\ge2} b_{-1,m-1}y^m .
\]

We have that

\[
-
\left[
q y^{-1}\varphi(x)
+
p x^{-1}\psi(y)
\right] dx\wedge dy
=
(\varphi(x)dx + \psi(y)dy)
\wedge
\left(
\frac{p\,dx}{x} + \frac{q\,dy}{y}
\right)
\]

\[
=
(\varphi(x)dx + \psi(y)dy)
\wedge
d(x^p y^q).
\]

However,

\[
\varphi(x) = a_{-1,-1} + \Phi'(x)
\quad \text{for some } \Phi(x),
\]

and

\[
\psi(y) = b_{-1,-1} + \Psi'(y)
\quad \text{for some } \Psi(y).
\]

\bigskip

We may therefore write

\[
-
(\eta\wedge \omega_0)
=
+
\sum_{n,m\ge0}
(q a_{n-1,m} + p b_{n,m-1}) x^n y^m dx\wedge dy
\]

\[
+
(\Phi'(x)dx + \Psi'(y)dy)
\wedge
\frac{d(x^p y^q)}{x^p y^q}
\]

\[
+
(q a_{1,0}+p b_{0,1}) dx\wedge dy
\]

\[
+
\left(
q y^{-1} a_{-1,-1}
+
p x^{-1} b_{-1,-1}
\right) dx\wedge dy .
\]

On the other hand,

\[
h = \sum_{n,m\ge0} h_{nm} x^n y^m
\]

\[
dh =
\sum_{n,m\ge0} n h_{nm} x^{n-1} y^m dx
+
\sum_{n,m\ge0} m h_{nm} x^n y^{m-1} dy
\]

\[
dh \wedge \omega_0
=
dh \wedge (p y\,dx - q x\,dy)
\]

\[
=
\sum_{n,m\ge0}
(-nq\, h_{nm} x^n y^m
-
mp\, h_{nm} x^n y^m) dx\wedge dy
\]

\[
=
-
\sum_{n,m\ge0}
(nq + mp) h_{nm} x^n y^m dx\wedge dy .
\]

Therefore, $\eta \wedge \omega_0 = dh \wedge \omega_0$ is equivalent to
\[
\sum (q a_{n-1,m} + p b_{n,m-1}) x^n y^m dx\wedge dy
=
\sum (nq+mp) h_{n,m} x^n y^m dx\wedge dy
\]

and this is equivalent to

\[
q a_{n-1,m} + p b_{n,m-1} = (nq+mp) h_{n,m}.
\]

This last equation has solution

\[
h_{n,m} = \frac{q a_{n-1,m} + p b_{n,m-1}}{nq+mp}
\]

if and only if
\[
q a_{n-1,m} + p b_{n,m-1} = 0
\quad \text{every time that} \quad
nq+mp=0.
\]

\begin{definition}[resonance] We shall refer to the equation $nq+mp=0$ as the {\it resonance equation}
and to a pair $(n,m)$ satisfying this equation as {\it a resonance}  or a {\it resonant pair}.
\end{definition}

Next we study the possible resonances, according to $p,q$.

\noindent{\bf 1st case} : $\quad p>1,\; q>1.$

The resonance equation $nq+mp=0$ gives us $nq=-mp$ and then $n,m$ have opposite
signals.

From $nq=-mp$ we get $p \mid nq \Rightarrow p \mid n \Rightarrow n=kp$ for some $k\in\mathbb{N}$.

From $nq=-mp$ we have $kp\,q=-mp \Rightarrow m=-kq$.

\[
\Rightarrow (n,m)=k(p,-q).
\]

For
\[
\begin{cases}
n\ge -1\\
m\ge -1
\end{cases}
\]
we obtain $k=0$ so that the only resonance occurs for the pair $(n,m)=(0,0)$.

\noindent{\bf 2nd case} : $\quad p=1<q.$

We always have the trivial solution $(n,m)=(0,0)$.

This trivial solution is associated with a logarithmic part

\[
a_{n-1,m} x^{n-1}y^m dx + b_{n,m-1} x^n y^{m-1} dy
=
a_{-1,0}\frac{dx}{x} + b_{0,-1}\frac{dy}{y}.
\]

Let us look for the other solutions.

From $nq=-mp=-m$ and from $n\ge -1, m\ge -1$
we obtain $n>0$ and $m<0$. Therefore $m=-1$ so that $n=q$.

The pair $(n,m)=(-1,q)$ is associated with the term

\[
a_{n-1,m} x^{n-1}y^m dx + b_{n,m-1} x^n y^{m-1} dy
\]

\[
=
a_{-2,q} x^{-2} y^q dx + b_{-1,q-1} x^{-1} y^{q-1} dy.
\]

Since $a_{-2,q}=0$

\[
=
b_{-1,q-1} x^{-1} y^{q-1} dy
=
b_{-1,q-1}\frac{y^{q-1}}{x} dy.
\]

\noindent{\bf 3rd case} : $\quad q=1<p.$

Analogously to the 2nd case we obtain $(n,m)\in\{(0,0),(p,-1)\}$.

These resonances are associated with the logarithmic part
\[
a_{-1,0}\frac{dx}{x} + b_{0,-1}\frac{dy}{y}
\]
and the part
\[
a_{p-1,-1} \frac{x^{p-1}}{y} dx.
\]

\noindent{\bf 4th case:} $\quad p=q=1$

The resonance equation becomes
\[
nq+mp=0 \iff n+m=0.
\]

Since $n,m\ge -1$ the only possible solutions are given by
\[
(n,m)\in\{(0,0),(1,-1),(-1,1)\}
\]
and these are associated to the logarithmic part
\[
a_{-1,0}\frac{dx}{x} + b_{0,-1}\frac{dy}{y}
\quad \text{for } (n,m)= (0,0).
\]

For $(n,m)=(1,-1)$ terms

\[
a_{n-1,m} x^{n-1}y^m dx + b_{n,m-1} x^n y^{m-1} dy
\]

\[
=
a_{0,-1}\frac{dx}{y} + b_{1,-2} x y^{-2} dy
=
a_{0,-1}\frac{dx}{y}
\]
because we are working with order of the poles $\le 1$.

For $(n,m)=(-1,1)$ we have

\[
a_{n-1,m} x^{n-1}y^m dx + b_{n,m-1} x^n y^{m-1} dy
\]

\[
=
a_{-2,1} x^{-2} y dx + b_{-1,0}\frac{dy}{x}
=
b_{-1,0}\frac{dy}{x}.
\]

\begin{definition}[resonant part]
Given the $1$-form $\eta$ we shall denote by $R(\eta)$ the {\it resonant part} of $\eta$
which is the sum of monomials

\[
a_{n-1,m} x^{n-1}y^m dx + b_{n,m-1} x^n y^{m-1} dy
\]

where $(n,m)$ runs through the resonant pairs $(n,m)$:

\[
R(\eta) =
\sum_{nq+mp=0}
\left(
a_{n-1,m} x^{n-1}y^m dx + b_{n,m-1} x^n y^{m-1} dy
\right).
\]
\end{definition}

\subsection{Non-logarithmic resonances}
We now investigate further the possible resonances in the non-logarithmic part of the 
form $\eta$ above. Given the $1$-form
\[
\eta=\sum a_{nm}x^n y^m dx + \sum b_{nm}x^n y^m dy
\]

we define

\[
\Phi(x):=\sum_{m\ge 0} a_{m-1,-1} x^{m-1}y^{-1} dx
\]

and

\[
\Psi(y):=\sum_{m\ge 0} b_{-1,m-1} x^{-1}y^{m-1} dy.
\]

Let us investigate the possible resonances in $\Phi(x)$ and $\Psi(y)$.

\noindent{\bf 1st case} : $\quad p>1,\; q>1.$

The resonance equation is $qn+mp=0$ and the coefficient condition is
\[
q a_{m-1,m} + p b_{m,m-1} = 0.
\]

For the series that defines $\Phi(x)$ we have

\[
q(m-1)+(-1)p=0
\iff
q(m-1)=p.
\]

\[
\Rightarrow p\mid(m-1)
\Rightarrow m-1=kp
\Rightarrow qkp=p
\Rightarrow qk=1
\Rightarrow \text{contradiction}.
\]

Thus there are no resonances in $\Phi(x)$.

Let us check what happens for $\Psi(y)$.

For the series that defines $\Psi(y)$ we have
\[
q(-1)+(m-1)p=0
\]
and again we reach a contradiction.

Therefore, also in this case there are no resonances in $\Psi(y)$.

\noindent{\bf 2nd case} : $\quad p=1<q.$

$\Phi(x)$ has resonances given by

\[
q(m-1)+(-1)p=0
\Rightarrow q(m-1)=1
\Rightarrow q=1
\]

and again we reach a contradiction,
and therefore no resonances for $\Phi(x)$.

$\Psi(y)$ has resonances given by

\[
q(-1)+(m-1)p=0
\Rightarrow -q+m-1=0
\Rightarrow m=q+1.
\]

This gives the resonant term

\[
b_{-1,q-1} x^{-1}y^{q-1}dy
=
b_{-1,q-1}\frac{y^{q-1}}{x}dy.
\]

\noindent{\bf 3rd case} : $\quad q=1<p.$

Analogously to the 2nd case we have resonances in $\Phi(x)$ given by

\[
a_{p-1,-1}\frac{x^{p-1}}{y}dx
\]

and no resonances in $\Psi(y)$.

\noindent{\bf 4th case} : $\quad p=q=1.$

The resonances in $\Phi(x)$ are given by terms

\[
a_{m-1,m-1}x^{m-1}y^{-1}dx
\]

and

\[
q(m-1)+(-1)p=0
\Rightarrow m-1-1=0
\Rightarrow m=1.
\]

This gives the resonant term

\[
a_{0,-1}x^{0}y^{-1}dx
=
a_{0,-1}\frac{dx}{y}.
\]

The resonances in $\Psi(y)$ are given by

\[
q(-1)+p(m-1)=0
\Rightarrow -1+m-1=0
\Rightarrow m=1,
\]

giving the resonant term

\[
b_{-1,0}x^{-1}y^{0}dy
=
b_{-1,0}\frac{dy}{x}.
\]

All these resonances have already been identified in our previous study.

\subsection{Final normal form}

From what we have seen above we have:
\begin{proposition} Let $\eta, \omega_0, p,q$ be as above. As for what concerns the 
cohomology equation $ \eta \wedge \omega_0 = dh \wedge \omega_0$ we have the following cases:

\noindent{1st case} : $\quad p>1,q>1 \Rightarrow$ the only resonant pair is $(n,m)=(0,0)$

\[
R(\eta)
=
a_{-1,0} x^{-1}y^0 dx + b_{0,-1} x^0 y^{-1} dy
=
a_{-1,0}\frac{dx}{x} + b_{0,-1}\frac{dy}{y}.
\]

\noindent{2nd case} : $\quad p=1<q \Rightarrow$ the resonant pairs are
\[
(n,m)\in\{(0,0),(-1,q)\}
\]

\[
R(\eta)
=
a_{-1,0}\frac{dx}{x} + b_{0,-1}\frac{dy}{y}
+
a_{-2,q} x^{-2} y^q dx
+
b_{-1,q-1} x^{-1} y^{q-1} dy.
\]

$\Rightarrow R(\eta)
=
a_{-1,0}\frac{dx}{x}
+
b_{0,-1}\frac{dy}{y}
+
b_{-1,q-1}\frac{y^{q-1}}{x}dy.$

\noindent{3rd case} : $\quad q=1<p \Rightarrow$ the resonant pairs are
\[
(n,m)\in\{(0,0),(p,-1)\}
\]

\[
R(\eta)
=
a_{-1,0}\frac{dx}{x}
+
b_{0,-1}\frac{dy}{y}
+
a_{p-1,-1}\frac{x^{p-1}}{y}dx.
\]

\noindent{4th case} : $\quad p=q=1 \Rightarrow$ the resonant pairs are
\[
(n,m)\in\{(0,0),(1,-1),(-1,1)\}
\]

\[
R(\eta)
=
a_{-1,0}\frac{dx}{x}
+
b_{0,-1}\frac{dy}{y}
+
a_{0,-1}\frac{dx}{y}
+
b_{1,-2}\frac{x\,dy}{y^2}
+
a_{-2,1}\frac{y\,dx}{x^2}
+
b_{-1,0}\frac{dy}{x}.
\]

\[
\Rightarrow R(\eta)
=
a_{-1,0}\frac{dx}{x}
+
b_{0,-1}\frac{dy}{y}
+
a_{0,-1}\frac{dx}{y}
+
b_{-1,0}\frac{dy}{x}.
\]
\end{proposition}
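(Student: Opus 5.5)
The proposition is essentially a bookkeeping statement that collects the computations above, and I would prove it in that spirit. The first step is to fix the reduction. By the coefficient comparison already carried out, the cohomological equation \eqref{equation:cohomological} is equivalent to the family of scalar equations
\[
q a_{n-1,m} + p b_{n,m-1} = (nq+mp)\,h_{n,m}.
\]
These can be solved termwise with $h_{n,m} = (q a_{n-1,m}+p b_{n,m-1})/(nq+mp)$ whenever $nq+mp\neq 0$. So the only obstructions come from monomials $a_{n-1,m}x^{n-1}y^m\,dx + b_{n,m-1}x^ny^{m-1}\,dy$ indexed by resonant pairs, and these make up $R(\eta)$.

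The allowed range of indices comes from the hypothesis that $\eta$ has at most simple poles on $(xy=0)$. Every coefficient $a_{ij}$, $b_{ij}$ with an index below $-1$ vanishes, and the relevant pairs satisfy $n,m\ge -1$. The proof therefore reduces to solving $nq+mp=0$ with $n,m\ge -1$ and $\gcd(p,q)=1$. This diophantine step is the heart of the argument, although it is elementary. Since $p\mid nq$ and $\gcd(p,q)=1$, we get $p\mid n$; write $n=kp$, which forces $m=-kq$. The constraints $n,m\ge -1$ then give $kp\ge -1$ and $kq\le 1$.

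I would split into the four cases.
\begin{enumerate}
\item[$\bullet$] If $p,q>1$, then $k=0$, so the only resonant pair is $(0,0)$.
\item[$\bullet$] If $p=1<q$, then $k\in\{0,-1\}$, since $k=1$ would give $m=-q<-1$. The resonant pairs are $(0,0)$ and $(-1,q)$.
\item[$\bullet$] If $q=1<p$, the same argument by symmetry gives $(0,0)$ and $(p,-1)$.
\item[$\bullet$] If $p=q=1$, then $k\in\{-1,0,1\}$, giving $(0,0)$, $(1,-1)$ and $(-1,1)$.
\end{enumerate}

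For each resonant pair I substitute into the definition of $R(\eta)$ and discard the coefficients carrying an index $\le -2$, because they vanish by the simple-pole assumption.
\begin{enumerate}
\item[$\bullet$] For $(0,0)$ one gets the logarithmic part $a_{-1,0}\,dx/x+b_{0,-1}\,dy/y$.
\item[$\bullet$] For $(-1,q)$ the coefficient $a_{-2,q}$ vanishes, leaving $b_{-1,q-1}y^{q-1}\,dy/x$.
\item[$\bullet$] For $(p,-1)$ one gets $a_{p-1,-1}x^{p-1}\,dx/y$, since $b_{p,-2}=0$.
\item[$\bullet$] For $(1,-1)$ and $(-1,1)$ the coefficients $b_{1,-2}$ and $a_{-2,1}$ vanish, leaving $a_{0,-1}\,dx/y$ and $b_{-1,0}\,dy/x$.
\end{enumerate}
This produces exactly the displayed formulas: first the full expressions, then the simplified ones after the vanishing coefficients are dropped.

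Finally, I would cross-check the result against the analysis of $\Phi$ and $\Psi$ in the subsection on non-logarithmic resonances. That analysis shows that no further resonances arise from the polar series along $x=0$ or $y=0$, so the list is complete.

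The main obstacle is not mathematical depth but consistency of conventions. The paper's indexing shifts between $a_{nm}$ for $n,m\ge1$ and pole terms with negative indices, and I must make sure the simple-pole hypothesis is applied uniformly. Otherwise spurious terms such as $b_{1,-2}$ would survive. I would state this convention explicitly at the outset.
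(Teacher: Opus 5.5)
Your proposal is correct and follows the paper's route. You reduce to the coefficient equations $q a_{n-1,m}+p b_{n,m-1}=(nq+mp)h_{n,m}$, solve $nq+mp=0$ under $n,m\ge -1$, and substitute the resonant pairs into $R(\eta)$, dropping coefficients with an index $\le -2$ by the simple-pole assumption. Your uniform parametrization $(n,m)=k(p,-q)$ with $kp\ge -1$ and $kq\le 1$ is the paper's first-case argument applied to all four cases at once, and it is cleaner than the paper's case-by-case treatment of $p=1<q$. There the paper's intermediate line ``$m=-1$ so that $n=q$'' is a slip, although the paper still arrives at the correct pair $(-1,q)$.
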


Proceeding with the above notation, by construction the $1$-form $\tilde{\eta} := \eta - R(\eta)$ has no resonances and therefore
we can solve the cohomological equation:
\[
\tilde{\eta}\wedge \omega_0 = dh \wedge \omega_0
\]
has a solution $h$.

Thus, according to this remark and following the construction of
$\tilde{\eta}$ starting from $\eta$ we obtain:

\noindent{1st case} : $\quad (p>1,q>1)$ :

\[
\tilde{\eta}
=
\eta
-
a_{-1,0}\frac{dx}{x}
-
b_{0,-1}\frac{dy}{y}.
\]

\noindent{2nd case} : $\quad (1=p<q) :$

\[
\tilde{\eta}
=
\eta
-
b_{-1,q-1}\frac{y^{q-1}}{x}dy
-
a_{-1,0}\frac{dx}{x}
-
b_{0,-1}\frac{dy}{y}.
\]

\noindent{3rd case} : $\quad (q=1<p) :$

\[
\tilde{\eta}
=
\eta
-
a_{p-1,-1}\frac{x^{p-1}}{y}dx
-
a_{-1,0}\frac{dx}{x}
-
b_{0,-1}\frac{dy}{y}.
\]

\noindent{4th case} : $\quad (p=q=1) :$

\[
\tilde{\eta}
=
\eta
-
a_{-1,0}\frac{dx}{x}
-
b_{0,-1}\frac{dy}{y}
-
a_{0,-1}\frac{dx}{y}
+
b_{-1,0}\frac{dy}{x}.
\]

 Since $\tilde{\eta}:=\eta-R(\eta)$ satisfies
\[
\tilde{\eta}\wedge \omega_0 = dh\wedge \omega_0,
\]
by Saito-de Rham division lemma \cite{[Saito]} we obtain

\[
\tilde{\eta} = dh + a \omega_0
\]
for some holomorphic function $a(xy)$.

Using this combined with the description of $R(\eta)$ we obtain:

\begin{proposition} If $\eta,\omega_0$ are as above then there are functions
$a,h\in\mathcal{O}_2$ such that:
\begin{itemize}
\item[{\rm(1)}] if $p>1,q>1$ then
\[
\eta
=
a_{-1,0}\frac{dx}{x}
+
b_{0,-1}\frac{dy}{y}
+
a\omega_0
+
dh.
\]

\item[{\rm(2)}] if $1=p<q$ then
\[
\eta
=
a_{-1,0}\frac{dx}{x}
+
b_{0,-1}\frac{dy}{y}
+
b_{-1,q-1}\frac{y^{q-1}}{x}dy
+
a\omega_0
+
dh.
\]

\item[{\rm(3)}] if $1=q<p$ then
\[
\eta
=
a_{-1,0}\frac{dx}{x}
+
b_{0,-1}\frac{dy}{y}
+
a_{p-1,-1}\frac{x^{p-1}}{y}dx
+
a\omega_0
+
dh.
\]

\item[{\rm(4)}] if $p=q=1$ then
\[
\eta
=
a_{-1,0}\frac{dx}{x}
+
b_{0,-1}\frac{dy}{y}
+
b_{-1,0}\frac{dy}{x}
+
a_{0,-1}\frac{dx}{y}
+
a\omega_0
+
dh.
\]
\end{itemize}
\end{proposition}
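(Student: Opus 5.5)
The plan is to assemble the proposition from the two ingredients just established: the explicit description of the resonant part $R(\eta)$ in the previous proposition, and the solvability of the cohomological equation \eqref{equation:cohomological} once the resonances are removed.

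\textbf{Step 1: isolate the resonant part.} Fix one of the four cases for $(p,q)$. Set $\tilde\eta:=\eta-R(\eta)$, where $R(\eta)$ is read off from the previous proposition. Since all resonant monomials have been subtracted, every coefficient of $\tilde\eta$ attached to a pair $(n,m)$ with $nq+mp=0$ vanishes. The compatibility condition $q a_{n-1,m}+p b_{n,m-1}=0$ then holds trivially at resonances.

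\textbf{Step 2: solve the cohomological equation.} Define
\[
h_{n,m}=\frac{q a_{n-1,m}+p b_{n,m-1}}{nq+mp}
\]
for non-resonant $(n,m)$, and $h_{0,0}=0$. The non-logarithmic terms with a pole, collected into $\Phi$ and $\Psi$, were shown to be non-resonant, so they are handled the same way. They contribute terms of the form $\Phi'(x)$ and $\Psi'(y)$, which are again absorbed into $h$.

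This step needs convergence of $h=\sum h_{nm}x^ny^m$. Because $p,q\ge1$, we have $nq+mp\ge \min(p,q)(n+m)$ away from resonances, so $|nq+mp|\ge 1$ except at finitely many index pairs with a negative entry. Hence $|h_{nm}|\le C(|a_{n-1,m}|+|b_{n,m-1}|)$, and $h$ converges on the same polydisc as $\eta$. There are no small divisors.

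The main obstacle is checking that $h$ is genuinely holomorphic, rather than only meromorphic. For that I would verify the index ranges: $n,m\ge -1$ forces $h_{nm}$ with a negative index to come only from resonant or vanishing coefficients. This is where the simple-pole assumption is used.

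\textbf{Step 3: divide by $\omega_0$.} We now have $(\tilde\eta-dh)\wedge\omega_0=0$. The form $\omega_0=py\,dx-qx\,dy$ has an isolated singularity at $0$, so its coefficients form a regular sequence. First clear the simple poles along $xy=0$ by multiplying by $xy$. The Saito--de Rham division lemma \cite{[Saito]} then gives $xy(\tilde\eta-dh)=\tilde a\,\omega_0$. Next, restrict to $x=0$ and to $y=0$ to show that $xy$ divides $\tilde a$. This uses that $\tilde\eta$ has no residual pole terms left after removing $R(\eta)$, and that $\omega_0$ restricts to a nonzero multiple of $dy$ on $\{y\ne0\}$ (respectively $dx$). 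We obtain $\tilde\eta=dh+a\,\omega_0$ with $a\in\mathcal O_2$.

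\textbf{Step 4: substitute back.} Insert $\eta=R(\eta)+a\omega_0+dh$ using the four explicit forms of $R(\eta)$ from the previous proposition. This yields items (1)--(4) directly.
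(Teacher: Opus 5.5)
\textbf{There is a genuine gap.} Your route is the paper's: subtract $R(\eta)$, solve $q a_{n-1,m}+p b_{n,m-1}=(nq+mp)h_{n,m}$ off the resonances, then divide by $\omega_0$ via Saito--de Rham. The convergence part of Step 2 is fine, since $nq+mp$ is a nonzero integer off resonances.

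The gap is exactly where you locate the ``main obstacle'', and the index-range argument you propose there is false. For $\eta$ with simple poles, the polar coefficients $a_{n-1,-1}$ and $b_{-1,m-1}$ are in general non-resonant. The coefficient equation then gives
\[
h_{n,-1}=\frac{q\,a_{n-1,-1}}{qn-p},\qquad h_{-1,m}=\frac{p\,b_{-1,m-1}}{pm-q},
\]
which are typically nonzero, so $h$ has genuine poles on $xy=0$. Likewise, terms such as $a_{-1,m}x^{-1}y^{m}dx$ with $m\ge1$ survive in $\tilde\eta=\eta-R(\eta)$. So the claim in Step 3 that $\tilde\eta$ has ``no residual pole terms'', and hence $xy\mid\tilde a$, also fails, and $a$ comes out meromorphic.

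This cannot be repaired, because with $a,h\in\mathcal{O}_2$ the statement is false as written. In that case $a\omega_0+dh$ is holomorphic, so the identity would force $\eta-R(\eta)$ to be holomorphic. Concretely, take $p=2$, $q=3$. For $\eta=dx/y$ and for $\eta=y\,dx/x$ one has $R(\eta)=0$ (case (1)), while your procedure produces
\[
\frac{dx}{y}=d\Big(\frac{3x}{y}\Big)-\frac{1}{y^{2}}\,\omega_0,\qquad \frac{y\,dx}{x}=\frac{3}{2}\,dy+\frac{1}{2x}\,\omega_0 .
\]
Neither form admits a decomposition with holomorphic $a,h$.

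What Steps 1--3 actually prove is the same identity with weaker regularity, provided you clear denominators by a power of $xy$ before applying Saito--de Rham. Then $h$ has at most simple poles along $xy=0$, and $a$ is meromorphic with poles on $xy=0$. Getting holomorphic $a,h$ needs an extra hypothesis, for instance that $\eta-R(\eta)$ is holomorphic. The paper's own argument follows your outline and simply asserts solvability before invoking the division lemma, so it does not supply the missing step either.
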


\section{Homological conditions}
\label{section:homoleq}
In this section we study the homological conditions associated to the local pencil
\[
\frac{x^p}{y^q}=c,\qquad c\in\mathbb{P}^1.
\]

Let us fix some notation:
given $(x_0,y_0)\in\mathbb{C}^2$ we put

\[
x(t)=x_0 e^{2\pi i q t},
\qquad
y(t)=y_0 e^{2\pi i p t},
\qquad
t\in[0,1].
\]

Then

\[
\frac{(x(t))^p}{(y(t))^q}
=
\frac{x_0^p}{y_0^q}.
\]

Put $\gamma(t)=\gamma_{(x_0,y_0)}(t)=(x(t),y(t))$.
Let $\phi:=x^p/y^q$ and put
\[
\phi_c=\left\{\frac{x^p}{y^q}=c\right\}
\]
for $c\in\mathbb{C}\cup\{\infty\}$.
Then $\gamma\subset \phi_c$ for $c=\dfrac{x_0^p}{y_0^q}$.


\subsection{Lemmata}

In this part we compute some  line integrals associated to the resonant terms in the 
cohomological equation \eqref{equation:cohomological}.
We consider 

\[
\delta(t) = (x_0 e^{2\pi i q t},\, y_0 e^{2\pi i p t}), \quad 0 \le t \le 1.
\]

\[
\int_\gamma \frac{y^{q-1}}{x^p} dy
=
\int_0^1
\frac{y_0^{q-1} e^{2\pi i p (q-1)t} \cdot 2\pi i p\, y_0 e^{2\pi i p t}}
{x_0^p e^{2\pi i q p t}} dt
\]

\[
=
2\pi i p \frac{y_0^q}{x_0^p}
\int_0^1
e^{2\pi i p q t}
e^{-2\pi i q p t} dt
=
2\pi i p \frac{y_0^q}{x_0^p}
\int_0^1 dt.
\]

For \(p=1\) we have

\[
\int_\gamma \frac{y^{q-1}}{x} dy
=
2\pi i \frac{y_0^q}{x_0}.
\]

Similarly, for \(q=1\) we have

\[
\int_\gamma \frac{x^{p-1}}{y} dx
=
2\pi i \frac{x_0^p}{y_0}.
\]

For \(p=q=1\):

\[
\int \frac{dx}{y}
=
\int_0^1
\frac{2\pi i x_0 e^{2\pi i t}}
{y_0 e^{2\pi i t}} dt
=
2\pi i \frac{x_0}{y_0}
\int_0^1 dt
=
2\pi i \frac{x_0}{y_0}.
\]

\[
\int \frac{dy}{x}
=
2\pi i \frac{y_0}{x_0}.
\]

\begin{lemma}

Let
\[
R := \lambda \frac{df}{f} + \mu \frac{dg}{g}
+ c \frac{g^{q-1}}{f} dg + \tilde c \frac{f^{p-1}}{g} df
+ \alpha \frac{df}{g} + \beta \frac{dg}{f}.
\]

Assume that
\[
\int_{\gamma_c} R = 0, \quad \forall \gamma_c \subset \phi_c.
\]

Then
\[
c = \tilde c = \alpha = \beta = 0
\quad \text{and} \quad
\lambda q + \mu p = 0.
\]
\end{lemma}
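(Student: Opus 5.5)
The plan is to reduce to the local model $f=x$, $g=y$ near a transverse intersection point of $S_f$ and $S_g$, where $\phi = x^p/y^q$. We then integrate $R$ over the explicit loops $\gamma_{(x_0,y_0)}(t)=(x_0e^{2\pi i q t}, y_0 e^{2\pi i p t})$ of the previous subsection, which lie in the fibre $\phi_c$, $c = x_0^p/y_0^q$. By ($\mathcal M.1$) the curves meet transversally, so such points exist. Near one of them we may take $(f,g)$ as local coordinates $(x,y)$, and the loops $\gamma$ with $|x_0|,|y_0|$ small lie in $\phi_c$. Hence the hypothesis gives $\int_\gamma R=0$ for all small $(x_0,y_0)$ with $x_0y_0\neq 0$.

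The core step is to compute each term along $\gamma$ as a function of $(x_0,y_0)$.
\begin{itemize}
\item The logarithmic part gives $\int_\gamma \lambda\,dx/x+\mu\,dy/y = 2\pi i(\lambda q+\mu p)$, a constant.
\item For the other terms we use the computations of the Lemmata subsection, reading the integrand monomial by monomial. The integral of $x^{n}y^{m}\,dx$ (or $dy$) over $\gamma$ vanishes unless $q(n+1)+pm=0$ (resp. $qn+p(m+1)=0$); when it survives it equals a nonzero multiple of $x_0^{n+1}y_0^m$ (resp. $x_0^ny_0^{m+1}$).
\item For $c\,y^{q-1}\,dy/x$ the condition is $-q+pq=0$, i.e. $p=1$, and the value is $2\pi i\,c\,p\,y_0^q/x_0$.
\item For $\tilde c\, x^{p-1}dx/y$ it survives only if $q=1$, with value $2\pi i\tilde c\, q\, x_0^p/y_0$.
\item For $\alpha\,dx/y$ it survives only if $q=p$, i.e. $p=q=1$, with value $2\pi i\alpha\, x_0/y_0$.
\item Similarly $\beta\,dy/x$ gives $2\pi i\beta\, y_0/x_0$ when $p=q=1$.
\end{itemize}

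Summing, $0=\int_\gamma R$ becomes an identity between Laurent monomials in $(x_0,y_0)$, valid on an open set. The monomials $1$, $y_0^q/x_0$, $x_0^p/y_0$, $x_0/y_0$, $y_0/x_0$ are pairwise distinct as Laurent monomials, even in the overlapping case $p=q=1$: there $y_0/x_0$ appears from both $c$ and $\beta$, and $x_0/y_0$ from both $\tilde c$ and $\alpha$. Comparing coefficients therefore gives $\lambda q+\mu p=0$ in all cases. In the generic case $p,q>1$ the terms with $c,\tilde c,\alpha,\beta$ are exact on the fibres locally and yield no condition. The same happens in $p=1<q$ for $\alpha,\beta,\tilde c$, and in $q=1<p$ for $\alpha,\beta,c$. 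In the case $p=q=1$ we only obtain $c+\beta=0$ and $\tilde c+\alpha=0$.

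This is the main obstacle. The local loops only detect the resonant coefficients, so $c=\tilde c=\alpha=\beta=0$ cannot come from these loops alone. My plan is to use global cycles in the fibres $\phi_c$ of the rational pencil. These are the loops around the other intersection points of $S_f\cap S_g$, together with loops near a point where $f=0$ but $g\neq0$ (and vice versa). The term $\tilde c f^{p-1}df/g$ then has residue along $\{g=0\}$ varying with the fibre, and $c\,g^{q-1}dg/f$ has residue along $\{f=0\}$.

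Concretely, near a point of $\{f=0\}$ with $g\neq 0$, a fibre $\phi_c$ for $c$ small is a curve close to $\{f=0\}$. I would take a loop there encircling a further transverse point and compare the resulting periods with the monodromy/coefficient identity. The genericity ($\mathcal M.2$), with at least one of $S_f, S_g$ not connecting two dicritical points, should supply a cycle on which $g^{q-1}dg/f$ has nonzero period while the logarithmic part pairs trivially. That would force $c=0$, and symmetrically $\tilde c=0$, $\alpha=\beta=0$. If this global argument fails, I would fall back to treating the statement as holding in the resonant cases, which is all the cohomological equation of the previous section actually requires.
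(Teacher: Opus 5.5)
Your local computation follows the same route as the paper's proof: coordinates $f=x$, $g=y$ at a transverse point, and periods over $\gamma_{(x_0,y_0)}$. Your bookkeeping is actually the correct one. The paper's displayed identity assigns the monomials $y_0^q/x_0^p$, $x_0^p/y_0^q$, $y_0/x_0$, $x_0/y_0$ to the four extra terms for all $(p,q)$. But $2\pi i p\,y_0^q/x_0^p$ is the period of $y^{q-1}dy/x^p$, not of $y^{q-1}dy/x$, and, as you found, these terms have nonzero periods along $\gamma$ only in the resonant cases.

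The genuine gap is your global step, and it cannot be closed, because the statement read literally is false. For $c\in\mathbb C^*$, on $\phi_c$ minus the base points one has $g^q=f^p/c$ and $q\,dg/g=p\,df/f$, hence
\[
\frac{g^{q-1}}{f}\,dg\big|_{\phi_c}=\frac{p}{qc}\,f^{p-2}\,df\big|_{\phi_c},
\qquad
\frac{f^{p-1}}{g}\,df\big|_{\phi_c}=\frac{qc}{p}\,g^{q-2}\,dg\big|_{\phi_c}.
\]
These are restrictions of exact polynomial forms as soon as $p\ge2$, resp.\ $q\ge2$. Likewise, on every fibre with $p\ne q$ one has $\frac{df}{g}=\frac{q}{q-p}\,d(f/g)$ and $\frac{dg}{f}=\frac{p}{p-q}\,d(g/f)$. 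So in the non-resonant cases these terms have zero period on \emph{every} cycle of \emph{every} fibre, local or global. No genericity condition such as $(\mathcal M.2)$ can detect them, and that hypothesis is not even available here, since the lemma is set for germs via an embedding $(\mathbb C^2,0)\to(\mathbb C^m,0)$. For instance, $p=2$, $q=3$, $R=\frac{g^2}{f}\,dg$ satisfies the hypothesis with $c=1$.

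For $p=q=1$ it is worse. The forms $\frac{g^{q-1}}{f}dg$ and $\frac{dg}{f}$ are literally the same, as are $\frac{f^{p-1}}{g}df$ and $\frac{df}{g}$. So $R$ depends only on $c+\beta$ and $\tilde c+\alpha$; taking $c=1$, $\beta=-1$ gives $R=0$, and no argument can separate the coefficients.

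What is true, and what your local computation already proves completely, is your fallback, the case-wise statement:
\begin{itemize}
\item $\lambda q+\mu p=0$ always;
\item $c=0$ when $p=1<q$;
\item $\tilde c=0$ when $q=1<p$;
\item $\alpha=\beta=0$ when $p=q=1$, with $R=\lambda\frac{df}{f}+\mu\frac{dg}{g}+\alpha\frac{df}{g}+\beta\frac{dg}{f}$ (duplicates merged).
\end{itemize}
This is exactly the form in which the lemma is used later, since each case of Proposition~\ref{proposition:cohomologyeqdimm} carries only its own resonant term. So state that version and conclude, as you did, by comparing coefficients of the distinct Laurent monomials in the relevant case. Drop the global argument.
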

\begin{proof}

Since \(f,g\) are in general position we may choose an embedding
\[
\delta : (\mathbb C^2,0) \to (\mathbb C^m,0)
\]
such that
\[
f^* = f \circ \delta,
\quad
g^* = g \circ \delta
\]
form a system of local coordinates
\[
(f^*, g^*) = (x,y).
\]

Then from \(\int_\gamma R = 0\) we obtain

\[
\int_{\gamma_c^*} R^* = 0
\]

where $R^*=\delta ^* R$, \, \(\gamma_c^*\) is any closed path in
\[
\phi_c^* = \{ \phi^* = c \}
\]
for
\[
\phi^* = x^p / y^q.
\]

Using the computations before this lemma we have

\[
0 = \lambda q + \mu p
+ c \, 2\pi i p \frac{y_0^q}{x_0^p}
+ \tilde c \, 2\pi i q \frac{x_0^p}{y_0^q}
+ \alpha \, 2\pi i \frac{y_0}{x_0}
+ \beta \, 2\pi i \frac{x_0}{y_0}, \forall x_0,y_0.
\]

This implies
\[
c = \tilde c = \alpha = \beta = 0
\quad \text{and} \quad
\lambda q + \mu p = 0.
\]
\end{proof}


    \subsection{Homology vanishing and normal forms}

For
\[
\eta=\sum_{n,m\ge1} a_{nm}x^n y^m dx + \sum_{n,m\ge1} b_{nm}x^n y^m dy
\]

 we obtain

\[
\gamma^*\eta
=
\sum_{n,m\ge 1}
2\pi i\, q\, a_{nm}\,
x_0^n e^{2\pi i q n t}\,
y_0^m e^{2\pi i p m t}\,
x_0 e^{2\pi i q t}\, dt
+
\sum_{n,m\ge 1}
2\pi i\, p\, b_{nm}\,
x_0^n e^{2\pi i q n t}\,
y_0^m e^{2\pi i p m t}\,
y_0 e^{2\pi i p t}\, dt
\]

\[
=
2\pi i \sum_{n,m\ge 1}
q a_{nm} x_0^{n+1} y_0^m
e^{2\pi i(q(n+1)+pm)t} dt
+
2\pi i \sum_{n,m\ge 1}
p b_{nm} x_0^{n} y_0^{m+1}
e^{2\pi i(qn+p(m+1))t} dt
\]

\[
=
2\pi i \sum_{n,m\ge 0}
q a_{n-1,m} x_0^n y_0^m
e^{2\pi i(qn+pm)t} dt
+
2\pi i \sum_{n,m\ge 0}
p b_{n,m-1} x_0^n y_0^m
e^{2\pi i(qn+pm)t} dt .
\]

Hence
\[
\int_{\gamma_c} \eta = 0
\quad \text{for } c=\phi(x_0,y_0),
\]
we get

\[
0
=
\sum_{n,m\ge 0}
\int_0^1
q a_{n-1,m} e^{2\pi i(qn+pm)t} x_0^n y_0^m dt
+
\sum_{n,m\ge 0}
\int_0^1
p b_{n,m-1} x_0^n y_0^m
e^{2\pi i(qn+pm)t} dt
\]

\[
=
\sum_{n,m\ge 0}
\int_0^1
(q a_{n-1,m}+p b_{n,m-1})
x_0^n y_0^m
e^{2\pi i(qn+pm)t} dt
\]

\[
+
\sum_{m\ge 0}
\int_0^1
q a_{n-1,-1}
e^{2\pi i(qn-p)t}
x_0^n y_0^{-1} dt
+
\sum_{m\ge 0}
\int_0^1
p b_{-1,m-1}
x_0^{-1} y_0^m
e^{2\pi i(-q+mp)t} dt .
\]

Let us calculate the integrals separately:

\[
\sum_{n,m\ge 0}
\int_0^1
(q a_{n-1,m}+p b_{n,m-1})
x_0^n y_0^m
e^{2\pi i(qn+pm)t} dt
\]

\[
=
\sum_{\substack{n,m\ge 0\\ qn+pm\neq 0}}
+
\sum_{\substack{n,m\ge 0\\ qn+pm=0}} .
\]

Observe that

if $qn+pm=0$ then
\[
\int_0^1 e^{2\pi i(qn+pm)t} dt = 1,
\]

if $qn+pm\neq 0$ then
\[
\int_0^1 e^{2\pi i(qn+pm)t} dt
=
\frac{1}{2\pi i(qn+pm)}
\left[e^{2\pi i(qn+pm)t}\right]_0^1
=
0.
\]

thus we have

\[
\int_{\gamma_c} \eta
=
\sum_{\substack{n,m\ge 0\\ qn+pm=0}}
(q a_{n-1,m}+p b_{n,m-1})
x_0^n y_0^m
\]

\[
+
\sum_{m\ge 0}
\int_0^1
q a_{n-1,-1}
e^{2\pi i(qn-p)t}
x_0^n y_0^{-1} dt
\]

\[
+
\sum_{m\ge 0}
\int_0^1
p b_{-1,m-1}
x_0^{-1} y_0^m
e^{2\pi i(-q+mp)t} dt .
\]

where

\[
\sum_{m\ge 0}
\int_0^1
q a_{n-1,-1}
e^{2\pi i(qn-p)t}
x_0^n y_0^{-1} dt
=
\sum_{m\ge 0}
q a_{n-1,-1} x_0^n y_0^{-1}
\int_0^1 e^{2\pi i(qn-p)t} dt .
\]

Observe that

\[
\int_0^1 e^{2\pi i(qn-p)t} dt
=
\begin{cases}
0 & \text{if } qn\neq p,\\
1 & \text{if } qn=p.
\end{cases}
\]

Moreover $p=qn \Rightarrow$
\[
\text{if } q\ge 2 \text{ then } q\mid p \text{ and } n=1,
\quad
\text{or, if } q=1, \text{ then } n=p.
\]

Analogously,

\[
\sum_{m\ge 0}
\int_0^1
p b_{-1,m-1}
x_0^{-1} y_0^m
e^{2\pi i(-q+mp)t} dt
=
\sum_{m\ge 0}
p b_{-1,m-1} x_0^{-1} y_0^m
\int_0^1 e^{2\pi i(-q+mp)t} dt .
\]

Observe that

\[
\int_0^1 e^{2\pi i(-q+mp)t} dt
=
\begin{cases}
0 & \text{if } -q+mp\neq 0,\\
1 & \text{if } q=mp.
\end{cases}
\]

Moreover $q=mp \Rightarrow$
\[
\text{if } p\ge 2 \text{ then } m=1 \text{ and } q=p,
\quad
\text{because } (p,q)=1,
\quad
\text{or, if } p=1, \text{ then } m=q.
\]

Thus we have different cases to consider:

Case $p\ge 1,\; q\ge 1$:

We have, by the calculations above,

\[
\int_{\gamma_c}\eta
=
\sum_{\substack{n,m\ge 0\\ qn+pm=0}}
(q a_{n-1,m}+p b_{n,m-1}) x_0^n y_0^m
\]

\[
+
q a_{0,-1} x_0 y_0^{-1}
+
p b_{-1,0} x_0^{-1} y_0 .
\]

and then

\[
\int_{\gamma_c}\eta=0 \quad \forall \gamma_c
\quad \text{iff}
\]

\[
\begin{cases}
q a_{n-1,m}+p b_{n,m-1}=0
& \forall n,m\ge 0 \text{ such that } qn+pm=0,\\[6pt]
a_{0,-1}=0,\quad b_{-1,0}=0.
\end{cases}
\]

Case $p=q=1$ :

We have for the integrals

\[
\int_0^1 e^{2\pi i(qm-p)t}dt
=
\int_0^1 e^{2\pi i(m-1)t}dt
=
\begin{cases}
0 & \text{if } m\neq 1,\\
1 & \text{if } m=1.
\end{cases}
\]

Moreover

\[
\int_0^1 e^{2\pi i(-q+mp)t}dt
=
\int_0^1 e^{2\pi i(m-1)t}dt
=
\begin{cases}
0 & \text{if } m\neq 1,\\
1 & \text{if } m=1.
\end{cases}
\]

Then we have

\[
\int_{\gamma_c}\eta=0,\ \forall \gamma_c
\quad \text{if and only if}
\]

\[
\begin{cases}
q a_{n-1,m}+p b_{n,m-1}=0
& \forall n,m\ge0 \text{ such that } n+m=0,
\text{ i.e. } (n,m)\in\{(0,0),(-1,1),(1,-1)\},\\[6pt]
a_{1,-1}=0,\quad b_{-1,1}=0.
\end{cases}
\]

Summarizing we have:

\begin{proposition}
\label{proposition:homology}

Let $\eta,\omega_0$ be as above.

The following conditions are equivalent:

\noindent{\rm(i)}
\[
\int_\gamma \eta =0,\ \forall \gamma=\gamma_{(x_0,y_0)}\subset\phi_c,\ \forall c.
\]

\noindent{\rm(ii)}
\[
\exists\, a,h\in\mathcal{O}_2
\quad \text{such that}
\quad
\eta=a\omega_0+dh.
\]
\noindent{\rm(iii)}
\[
\exists\, \tilde a,\tilde h\in\mathcal{O}_2
\quad \text{such that}
\quad
\eta=\tilde a\frac{\omega_0}{xy}+d \tilde h.
\]

\end{proposition}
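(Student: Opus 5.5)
We prove the equivalences in the cycle (ii)$\Rightarrow$(iii)$\Rightarrow$(i)$\Rightarrow$(ii). Throughout, $\eta$ is the meromorphic $1$-form of Section~\ref{section:cohomoeq}, with at most simple poles along $(xy=0)$, and $\omega_0=py\,dx-qx\,dy$.

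\textbf{(ii)$\Rightarrow$(iii).} Given $\eta=a\omega_0+dh$, set $\tilde a:=xy\,a$ and $\tilde h:=h$. Then $\tilde a\,\omega_0/(xy)=a\omega_0$, and both functions are holomorphic.

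\textbf{(iii)$\Rightarrow$(i).} Along each loop $\gamma=\gamma_{(x_0,y_0)}$ the function $\phi=x^p/y^q$ is constant. Since $\omega_0/(xy)=p\,dx/x-q\,dy/y$ is proportional to $d\phi/\phi$, the pull-back $\gamma^*(\omega_0/(xy))$ vanishes identically. It remains to check that $\int_\gamma d\tilde h=0$, which holds because $\tilde h$ is single-valued and $\gamma$ is closed. The loops $\gamma_{(x_0,y_0)}$ avoid the axes when $x_0y_0\neq0$, so all the quantities involved are well defined there.

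\textbf{(i)$\Rightarrow$(ii).} This is the main step, and it uses the normal form of the second Proposition of Section~\ref{section:cohomoeq}. That Proposition writes
\[
\eta=R+a\omega_0+dh,
\]
where $R$ is a combination of $dx/x$, $dy/y$ and the non-logarithmic resonant terms $y^{q-1}dy/x$, $x^{p-1}dx/y$, $dx/y$, $dy/x$, present according to the four cases for $(p,q)$. Integrate $\eta$ over $\gamma_{(x_0,y_0)}$. The term $a\omega_0$ contributes nothing by the argument for (iii)$\Rightarrow$(i), and $dh$ contributes nothing since $h$ is single-valued. Hence $\int_\gamma R=0$ for every $(x_0,y_0)$. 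The computations preceding the Lemma of Section~\ref{section:homoleq} (equivalently, the Lemma itself with $f=x$, $g=y$) evaluate these integrals as
\[
2\pi i\left(q\,a_{-1,0}+p\,b_{0,-1}\right)+2\pi i\,c\,\frac{y_0^q}{x_0^p}+\cdots .
\]
The terms shown are a constant plus non-constant monomials in $x_0^{\pm1},y_0^{\pm1}$, and the omitted terms are further monomials of the same kind with distinct exponents. Since the total vanishes identically in $(x_0,y_0)$, all non-logarithmic coefficients must vanish, and in addition $q\,a_{-1,0}+p\,b_{0,-1}=0$. With these relations the logarithmic part becomes
\[
a_{-1,0}\frac{dx}{x}+b_{0,-1}\frac{dy}{y}=\frac{a_{-1,0}}{p}\cdot\frac{\omega_0}{xy}.
\]
This holds because $\omega_0/(xy)=p\,dx/x-q\,dy/y$ and $b_{0,-1}=-q\,a_{-1,0}/p$. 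It follows that
\[
\eta=\left(a+\frac{a_{-1,0}}{p\,xy}\right)\omega_0+dh .
\]

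\textbf{Main obstacle.} The expected difficulty is that this coefficient $a+a_{-1,0}/(p\,xy)$ is meromorphic, not holomorphic, when $a_{-1,0}\neq0$. This is exactly why statement (iii), with the divisor $xy$, is the natural intermediate one. To get from (i) to (ii) with $a\in\mathcal O_2$, I would first try to show $a_{-1,0}=0$. The idea is that the hypothesis on $\eta$ (its expansion runs over $n,m\ge1$ in the stated series) makes the residues of $\eta$ along the axes vanish, which removes the logarithmic part. If that fails, I would prove (i)$\Leftrightarrow$(iii) directly from the computation above and regard (ii) as holding after absorbing $1/(xy)$, i.e.\ interpret $\mathcal O_2$ as allowing this simple pole. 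Pinning down this point precisely is where the care is needed.
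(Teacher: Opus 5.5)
\textbf{Verdict: genuine gap.} Your implications (ii)$\Rightarrow$(iii) and (iii)$\Rightarrow$(i) are correct and coincide with the paper's. The gap is (i)$\Rightarrow$(ii): you stop at $\eta=(a+a_{-1,0}/(p\,xy))\,\omega_0+dh$ and leave the pole unresolved, and neither of your fallbacks can close it.

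\textbf{(i)$\Rightarrow$(ii) fails when $\eta$ has poles.} Suppose $\eta$ genuinely has poles on $xy=0$. This is the only setting in which $a_{-1,0}$, $b_{0,-1}$ and $R(\eta)$ are non-trivial, and it is the one used later with $\eta=\Omega/(xy)$. Take $\eta=\omega_0/(xy)=p\,dx/x-q\,dy/y$. Then $\gamma^*\eta=(2\pi i\,pq-2\pi i\,qp)\,dt=0$, and $\eta$ satisfies (iii) with $\tilde a=1$, $\tilde h=0$. Yet $\eta$ is not holomorphic, while $a\omega_0+dh$ is holomorphic for $a,h\in\mathcal{O}_2$, so (ii) fails. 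Hence $a_{-1,0}=0$ cannot be derived from (i).

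\textbf{The residue is not the only obstruction.} Take $\eta=y\,dx/x$. It has $R(\eta)=0$ and $\int_\gamma\eta=2\pi i q\,y_0\int_0^1e^{2\pi i pt}\,dt=0$, but it is not holomorphic. So the normal form you invoke, $\eta=R(\eta)+a\omega_0+dh$ with $a,h\in\mathcal{O}_2$, does not hold for meromorphic $\eta$. Here the correct decomposition is $\eta=\frac{y}{p}\,\frac{\omega_0}{xy}+d\bigl(\frac{q}{p}\,y\bigr)$.

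\textbf{Your fallback (i)$\Leftrightarrow$(iii) also fails.} Deriving it from that normal form breaks down once arbitrary simple poles are allowed. For $(p,q)\neq(1,1)$, the form $\eta=dx/y$ satisfies (i), because $\gamma^*\eta$ is a multiple of $e^{2\pi i(q-p)t}\,dt$. It does not satisfy (iii). In $\eta=\tilde a\,\omega_0/(xy)+d\tilde h$, the $dy$-component forces $\tilde a=y\tilde h_y/q$. The $dx$-component multiplied by $xy$ then reads $x=p\,y^2\tilde h_y/q+xy\,\tilde h_x$, which is impossible on $y=0$. Here the cohomological equation needs a meromorphic $h$, as Section~\ref{section:cohomoeq} in fact allows.

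\textbf{What is actually true.}
\begin{itemize}
\item For holomorphic $\eta$, all three conditions hold for every $\eta$. Each monomial $x^ny^m\,dx$ or $x^ny^m\,dy$ with $n,m\ge0$ integrates to zero over $\gamma$, because $q(n+1)+pm>0$ and $qn+p(m+1)>0$; and $R(\eta)=0$. So your first fallback works only where the statement is vacuous.
\item For logarithmic $\eta=\alpha\,dx/x+\beta\,dy/y$ with $\alpha,\beta\in\mathcal{O}_2$, one has (i)$\Leftrightarrow q\alpha(0)+p\beta(0)=0\Leftrightarrow$(iii), while (ii) forces $\eta$ to be holomorphic.
\end{itemize}

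\textbf{The paper's proof has the same gap.} It takes the same route as yours: Fourier coefficients along $\gamma$, resonance conditions, solvability of the cohomological equation, then Saito--de Rham division. It passes over exactly this point, since the division produces a coefficient $a$ with poles on $xy=0$. So it does not supply the missing step, and the statement has to be corrected rather than proved as written.
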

\begin{proof}
\noindent{ $(i)\implies(ii)$}. 
Assume that $\int_\gamma\eta=0,\ \forall\gamma\subset\phi_c,\ \forall c$.

Choose
\[
\gamma=\gamma_{(x_0,y_0)}
=
(x_0e^{2\pi iqt},y_0e^{2\pi ipt})
\]

and write

\[
\eta=\sum_{n,m\ge1}(a_{nm}x^ny^m dx+b_{nm}x^ny^m dy).
\]

Then

\[
\gamma^*\eta
=
\sum_{n,m\ge1}
a_{nm}x_0^ne^{2\pi iqnt}y_0^me^{2\pi ipmt}(2\pi iq)e^{2\pi iqt}dt
\]

\[
+
\sum_{n,m\ge1}
b_{nm}x_0^ne^{2\pi iqnt}y_0^me^{2\pi ipmt}(2\pi ip)e^{2\pi ipt}dt
\]

\[
=
\sum_{n,m\ge1}
a_{nm}x_0^{n+1}y_0^m
e^{2\pi it[(n+1)q+mp]}dt
\]

\[
+
\sum_{n,m\ge1}
b_{nm}x_0^ny_0^{m+1}
e^{2\pi it[nq+(m+1)p]}dt.
\]

Therefore

\[
\gamma^*\eta
=
2\pi i
\sum_{n,m\ge0}
q a_{n-1,m}x_0^ny_0^m
e^{2\pi it[nq+mp]}dt
\]

\[
+
2\pi i
\sum_{n,m\ge0}
p b_{n,m-1}x_0^ny_0^m
e^{2\pi it[nq+mp]}dt
\]

\[
=
2\pi i
\sum_{n,m\ge0}
(q a_{n-1,m}+p b_{n,m-1})
x_0^ny_0^m
e^{2\pi it[nq+mp]}dt
\]

\[
+
2\pi i
\sum_{m\ge0}
q a_{m-1,-1}
x_0^my_0^{-1}
e^{2\pi it[mq-p]}dt
\]

\[
+
2\pi i
\sum_{m\ge0}
p b_{-1,m-1}
x_0^{-1}y_0^m
e^{2\pi it[-q+mp]}dt.
\]

Now we make a few remarks:

\[
\int_0^1 e^{2\pi it[mq-p]}dt
=
\begin{cases}
0 & \text{if } mq\neq p,\\
1 & \text{if } mq=p.
\end{cases}
\]

If $p=mq$ then $\langle p,q\rangle=q$ so that
$q=1$ and then $m=p$.

This shows that

\[
\int_0^1 e^{2\pi it[mq-p]}dt=0
\quad \text{except for the case } q=1 \text{ and } p=m.
\]

In particular $q\ge2\Rightarrow
\int_0^1 e^{2\pi it[mq-p]}dt=0,\ \forall m\ge0.
$

While, if $q=1$,

\[
\int_0^1
\sum_{m\ge0}
q a_{m-1,-1}
x_0^my_0^{-1}
e^{2\pi it[mq-p]}dt
=
q a_{p-1,-1}x_0^py_0^{-1}
=
a_{p-1,-1}x_0^py_0^{-1}.
\]

We continue.

\[
\int_0^1 e^{2\pi it[mp-q]}dt
=
\begin{cases}
0 & \text{if } mp\neq q,\\
1 & \text{if } mp=q.
\end{cases}
\]

If $q=mp$ then $\langle p,q\rangle=p=1$ so that
$p=1$ and $m=q$.

This shows that

\[
\int_0^1 e^{2\pi it[mp-q]}dt=0
\quad \text{except for the case } p=1 \text{ and } m=q.
\]

In particular $p\ge2\Rightarrow
\int_0^1
\sum_{m\ge0}
p b_{-1,m-1}
x_0^{-1}y_0^m
e^{2\pi it[mp-q]}dt
=0.
$

While if $p=1$,

\[
\int_0^1
\sum_{m\ge0}
p b_{-1,m-1}
x_0^{-1}y_0^m
e^{2\pi it[mp-q]}dt
=
p b_{-1,q-1}x_0^{-1}y_0^q
=
b_{-1,q-1}x_0^{-1}y_0^q.
\]

Finally, our last computation is: for $m,n\ge0$

\[
\int_0^1 e^{2\pi it(mq+mp)}dt
=
\begin{cases}
0 & \text{if } mq+mp\neq 0,\\
1 & \text{if } mq+mp=0.
\end{cases}
\]

so that

\[
\int_0^1
\sum_{n,m\ge0}
(q a_{n-1,m}+p b_{n,m-1})
x_0^n y_0^m
e^{2\pi it[mq+mp]}dt
=
\sum_{\substack{n,m\ge0\\ mq+mp=0}}
(q a_{n-1,m}+p b_{n,m-1})
x_0^n y_0^m.
\]

Using all the above computations we obtain:

\[
\int_{\gamma_c}\eta
=
2\pi i
\sum_{\substack{n,m\ge0\\ mq+mp=0}}
(q a_{n-1,m}+p b_{n,m-1})
x_0^n y_0^m
+
R(x_0,y_0),
\]

where:

• $p>1,\; q>1 \Rightarrow R(x_0,y_0)=0.$

• $p=1 \Rightarrow
R(x_0,y_0)=2\pi i\, b_{-1,q-1} x_0^{-1} y_0^q.$

• $q=1 \Rightarrow
R(x_0,y_0)=2\pi i\, a_{p-1,-1} x_0^p y_0^{-1}.$

Therefore the hypothesis
\[
\int_\gamma \eta=\int_{\gamma_c}\eta=0,
\quad \forall\gamma\subset\phi_c,
\]
implies on each case:

• $p>1,\; q>1 \Rightarrow
q a_{n-1,m}+p b_{n,m-1}=0
\quad \forall n,m\ge0 \text{ such that } mq+mp=0.$

• $p=1<q \Rightarrow
q a_{n-1,m}+p b_{n,m-1}=0
\quad \forall n,m\ge0 \text{ such that } mq+mp=0
\quad \text{AND ALSO }
b_{-1,q-1}=0.$

• $q=1<p \Rightarrow
q a_{n-1,m}+p b_{n,m-1}=0
\quad \forall n,m\ge0 \text{ such that } mq+mp=0
\quad \text{AND ALSO }
a_{p-1,-1}=0.$

We consider now the case $p=q=1$:

Again, we have

\[
\int_0^1 e^{2\pi it[mq-p]}dt
=
\int_0^1 e^{2\pi it[m-1]}dt
=
\begin{cases}
0 & \text{if } m\neq1,\\
1 & \text{if } m=1.
\end{cases}
\]

this implies

\[
\int_0^1
\sum_{m\ge0}
q a_{m-1,-1}
x_0^m y_0^{-1}
e^{2\pi it[mq-p]}dt
=
q a_{0,-1} x_0 y_0^{-1}.
\]

Analogously,

\[
\int_0^1
\sum_{m\ge0}
p b_{-1,m-1}
x_0^{-1} y_0^m
e^{2\pi it[mp-q]}dt
=
p b_{-1,0} x_0^{-1} y_0.
\]

Thus in this case $p=q=1$ we obtain

\[
\int_{\gamma_c}\eta
=
2\pi i
\sum_{\substack{n,m\ge0\\ mq+mp=0}}
(q a_{n-1,m}+p b_{n,m-1})
x_0^n y_0^m
\]

\[
+
2\pi i
\left[
q a_{0,-1} x_0 y_0^{-1}
+
p b_{-1,0} x_0^{-1} y_0
\right].
\]

The conclusion is

\[
\int_\gamma\eta=0,
\quad \forall\gamma\subset\phi_c,\ \forall c
\]

\[
\Longleftrightarrow
\begin{cases}
q a_{n-1,m}+p b_{n,m-1}=0
& \forall n,m\ge0 \text{ such that } mq+mp=0,\\[6pt]
a_{0,-1}=0,\quad b_{-1,0}=0.
\end{cases}
\]

Using the above computations we conclude that
(1) $\Rightarrow$ the conditions for the solution
of the cohomological equation are verified,
i.e., (i) $\Rightarrow$ (ii).

\noindent{$(ii)\implies(iii)$}. 
By hypothesis  $\exists\, a,h\in\mathcal{O}_2
\quad \text{such that}
\quad
\eta=a\omega_0+dh.
$
Thus $\eta=xya\frac{\omega_0}{xy}+dh=\tilde a\frac{\omega_0}{xy}+dh$ with $\tilde a \in \mathcal O_2$, proving (iii). 
Assume now that $\eta=\tilde a\frac{\omega_0}{xy}+dh$ for some $\tilde a ,\tilde h \in \mathcal O_2$. 
Given a closed path $\gamma \subset \phi_c$ we have $\int_\gamma \tilde a \frac{\omega_0}{xy} =0$ because $\phi_c$ is invariant by $\omega_0$. Also we have $\int_\gamma d\tilde h =0$ by the Fundamental theorem of Calculus. 
Thus we have $(iii) \implies (i)$. 
\end{proof}

\section{Polynomial case}
\label{section:polyncase}

We shall begin with a simple remark about a global version of Proposition~\ref{proposition:homology}.

\begin{lemma}
\label{lemma:homologyglobal}
Given polynomials $f(z,w),g(z,w)$ and relatively prime numbers
$p,q\in\mathbb{N}$ we put
\[
\omega_0 = p g\,df - q f\,dg.
\]
Let $a(z,w)$ be a given polynomial.
For a given polynomial $h(z,w)$ we finally define
\[
\Omega = a \omega_0 + dh.
\]
Then
\[
\int_{\gamma_c}\Omega = 0
\]
for every closed path $\gamma_c \subset \phi_c
=
\left\{\frac{f(z,w)^p}{g(z,w)^q}=c\right\}
\subset \mathbb{C}^2.
$
\end{lemma}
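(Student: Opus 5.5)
The plan is to treat the two summands of $\Omega=a\omega_0+dh$ separately, by linearity of the line integral. For the exact part $dh$ there is nothing to do: $h$ is a polynomial, hence a single-valued holomorphic function on all of $\mathbb{C}^2$. For any closed piecewise smooth path $\gamma_c$, the fundamental theorem of calculus gives $\int_{\gamma_c}dh=h(\gamma_c(1))-h(\gamma_c(0))=0$. This holds in particular for paths contained in $\phi_c$.

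For the term $a\omega_0$, the key observation is that $\omega_0$ restricts to zero on every fiber $\phi_c$. I would first verify this on the open set where $fg\neq0$. There the function $\phi=f^p/g^q$ is holomorphic and
\[
d\phi=\frac{f^{p-1}}{g^{q+1}}\,(p g\,df-q f\,dg)=\frac{f^{p-1}}{g^{q+1}}\,\omega_0 ,
\]
so $\omega_0$ is a nonvanishing function multiple of $d\phi$ there. Along any smooth path $\gamma_c$ lying in a level set $\{\phi=c\}$ we have $\gamma_c^*d\phi=d(\phi\circ\gamma_c)=0$, hence $\gamma_c^*\omega_0=0$, and therefore $\gamma_c^*(a\omega_0)=(a\circ\gamma_c)\,\gamma_c^*\omega_0=0$. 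Integrating gives $\int_{\gamma_c}a\omega_0=0$.

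It remains to handle the special fibers $c=0$ and $c=\infty$, namely $\{f=0\}$ and $\{g=0\}$, together with paths that pass through the indeterminacy set $\{f=g=0\}$. I expect this bookkeeping to be the only real obstacle. Here I would use that $\omega_0$ is a polynomial form, so it suffices to show $\gamma^*\omega_0=0$ for paths inside these curves. On $\{f=0\}$, we have $\gamma^*f\equiv0$, so $\gamma^*df=0$, and then $\gamma^*\omega_0=p\,(g\circ\gamma)\,\gamma^*df-q\,(f\circ\gamma)\,\gamma^*dg=0$. The curve $\{g=0\}$ is treated symmetrically. A path in the fiber closure $\overline{\phi_c}$ that passes through a point of $\{f=g=0\}$ is then covered by continuity: it can be split into finitely many arcs lying in $\{fg\neq0\}\cap\phi_c$ plus isolated points, and on each arc $\gamma^*\omega_0=0$ pointwise.

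Combining the two parts, $\int_{\gamma_c}\Omega=\int_{\gamma_c}a\omega_0+\int_{\gamma_c}dh=0$ for every closed path $\gamma_c\subset\phi_c$ and every $c\in\mathbb{C}\cup\{\infty\}$. This is the global analogue of the implication (iii)$\Rightarrow$(i) in Proposition~\ref{proposition:homology}.
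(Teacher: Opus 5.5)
Your proof is correct and is essentially the paper's argument: $\int_{\gamma_c}dh=0$ by the fundamental theorem of calculus, and $\gamma_c^*\omega_0=0$ because $\omega_0$ is a nonvanishing multiple of $d\phi$ off $\{fg=0\}$. The paper phrases this as differentiating $f(\gamma(t))^p/g(\gamma(t))^q$ in $t$, which tacitly divides by $f$ and $g$.

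Your extra treatment of $\{f=0\}$, $\{g=0\}$ and the indeterminacy set is more careful than the paper's. However, the ``finitely many arcs plus isolated points'' splitting is not justified: a path can return to an indeterminacy point infinitely often, and if $f,g$ share a factor that set is not discrete. The clean fix is to note that $\omega_0=pg\,df-qf\,dg$ vanishes at every point of $\{f=g=0\}$, so $\gamma^*\omega_0=0$ holds pointwise for every $t$.
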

\begin{proof}
\[
\int_{\gamma_c} dh = 0
\]
as we know from the Fundamental Theorem of Calculus.

Now
\[
\int_{\gamma_c} a \omega_0
=
\int_0^1
a(\gamma(t))\, \omega_0(\gamma(t))\cdot \gamma'(t)\, dt
\]
for any parametrization $\gamma(t)$, $0\le t\le1$ of $\gamma_c\subset\phi_c$.

But if we write $\gamma(t)=(z(t),w(t))$
then since $\gamma\subset\phi_c
=
\left\{\frac{f(z,w)^p}{g(z,w)^q}=c\right\}$
we conclude that
\[
f(\gamma(t))^p = c\, g(\gamma(t))^q,
\quad \forall t,
\]
and therefore
\[
\frac{d}{dt}
\left(
\frac{f(\gamma(t))^p}{g(\gamma(t))^q}
\right)=0.
\]

This implies
\[
p\,\frac{d}{dt}\big(f(\gamma(t))\big)\,g(\gamma(t))
-
q\,f(\gamma(t))\,\frac{d}{dt}\big(g(\gamma(t))\big)
=0,
\]
and then
\[
\omega_0(\gamma(t))\cdot \gamma'(t)=0.
\]

Therefore
\[
\int_{\gamma_c} a \omega_0 = 0.
\]
\end{proof}

Let us investigate the possible converses of Lemma~\ref{lemma:homologyglobal}.
In what follows we consider  a polynomial $1$-form 
$
\Omega(x,y)=A(x,y)dx+B(x,y)dy
$
in the complex plane, polynomials $f(z,w), g(z,w)\in\mathbb{C}[z,w]$ and natural numbers   $p,q\in\mathbb{N}$ with $\langle p,q\rangle=1$.
Let us denote by $\phi$ the rational function
$
\phi(z,w)=\frac{(f(z,w))^p}{(g(z,w))^q}.
$
and by $\omega_0$ the polynomial $1$-form
\[
\omega_0(z,w)=p g df - q f dg.
\]
For any value $c\in\mathbb{C}\cup\{\infty\}$ the curve $\phi_c\subset \mathbb C^2$ is given by
\[
\phi_c=\{\phi(z,w)=c\}.
\]
 The singular set of $\omega_0$ is 
\[
\mathrm{Sing}(\omega_0)
=
\{\xi\in\mathbb{C}^2;\ A(\xi)=B(\xi)=0\}.
\]
and the foliation $\mathcal{F}=\mathcal{F}(\omega_0)$ induced by
$\omega_0$ in the complex projective plane
\[
\mathbb{P}^2=\mathbb{C}^2\cup \ell_\infty,
\]
where $\ell_\infty\cong\mathbb{P}^1$ is
the line at infinity.

 We assume that $\deg A=\deg B=k$. Moreover, we shall assume $(f,g)$ is a Morse type pair, in the sense of Definition~\ref{definition:*generic}, as a consequence we have: 
$\mathrm{sing}(\omega_0)$ is finite (and discrete) so that
\[
\mathrm{sing}(\mathcal{F})
=
\mathrm{sing}(\omega_0)\cup(\mathrm{sing}(\mathcal{F})\cap \ell_\infty)
\]
which is also finite.

\begin{remark}
\begin{enumerate}
\item Our hypotheses allow the curves  $S_f$ and $S_g$
to connect a saddle-singularity to a dicritical
singularity.
\item the dicritical singularities of
$\mathcal{F}=\mathcal{F}(\omega_0)$
are given by the indeterminacy points
of the map
\[
\phi:\mathbb{CP}^2 \longrightarrow \mathbb{C}\cup\{\infty\},
\qquad
\phi=\frac{f^p}{g^q}.
\]
\item the non-dicritical singularities of $\mathcal F$ admit local holomorphic 
first integrals (\cite{mattei-moussu}).
\end{enumerate}
\end{remark}
Let us check our hypothesis above in
a concrete example:

\begin{example}
We depict the global picture of the pencil
\[
\phi=\frac{f^2}{g^3}=\frac{x^2}{y^3}.
\]
on $\mathbb CP^2$ (see figure~\ref{fig:paper-fig1}). The origin of the system $(x,y)$ is a dicritical singularity. 
We consider the systems of coordinates $(u,v)$ and $(r,s)$ given by
\[
x=u^{-1},\quad y=v\,u^{-1}
\quad\Rightarrow\quad
\frac{x^2}{y^3}=c
\iff
\frac{1/u^2}{v^3/u^3}=c
\]

\[
\iff
u=c v^3
\qquad
\text{(dicritical singularity)}.
\]

\[
r=\frac{1}{y},\quad s=\frac{x}{y}
\quad\Rightarrow\quad
\frac{x^2}{y^3}=c
\iff
\frac{s^2/r^2}{1/r^3}=c
\]

\[
\iff
 s^2r=c
\qquad
\text{(non-dicritical saddle)}.
\]

\begin{figure}[htbp]
\centering
\includegraphics[width=0.4\linewidth]{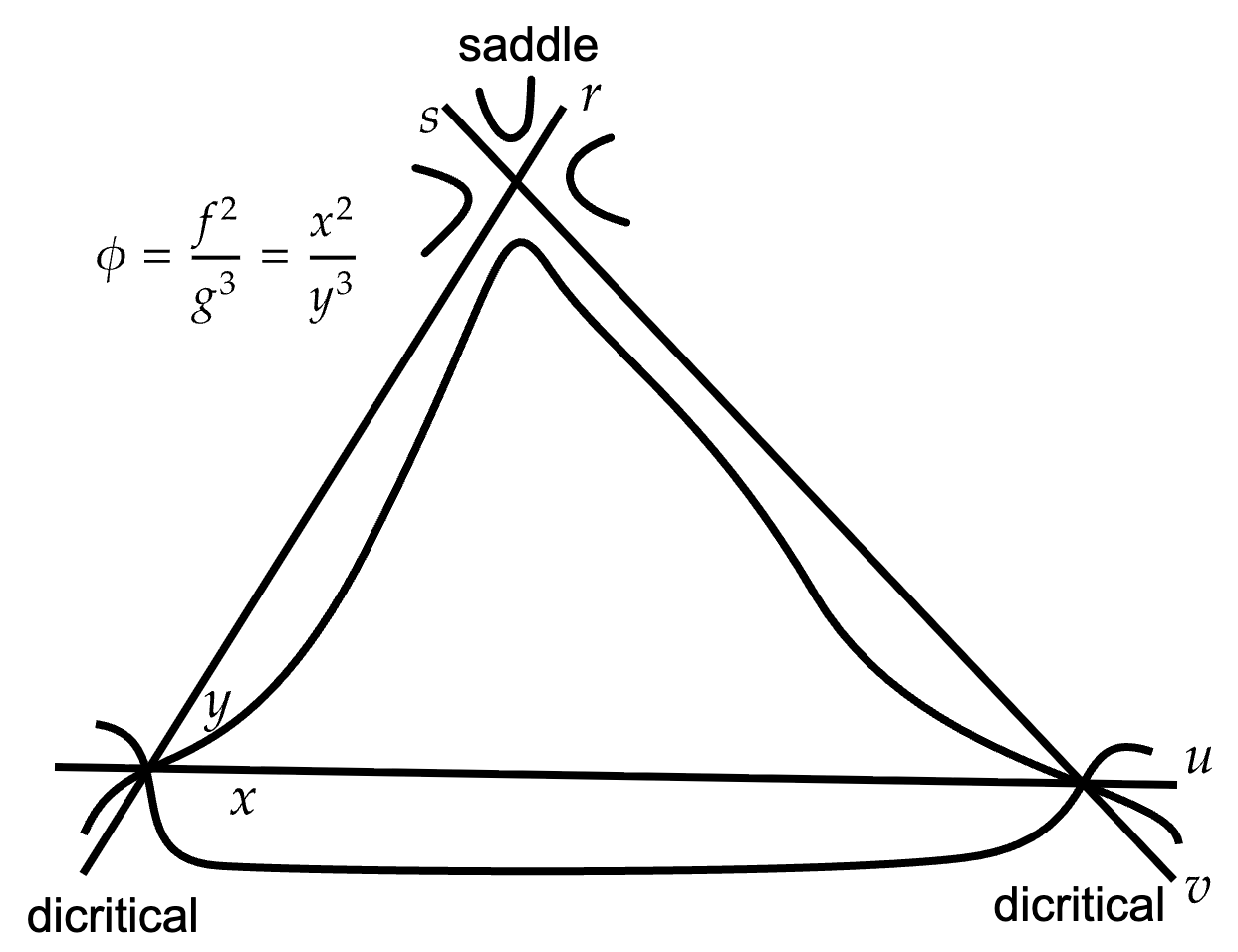}
\caption{Global picture of $x^2 / y^3 =c$ on $\mathbb CP^2$.}
\label{fig:paper-fig1}
\end{figure}

\end{example}

Now we go back to the situation
in the beginning. We also consider the
following homological hypothesis:

Assume that
\begin{equation}
\int_\gamma \frac{\Omega}{f g}=0
\label{equation:cohofg}
\end{equation}
for every closed path $\gamma\subset\phi_c$,
for every $c$.

\begin{proof}[Proof of Theorem~\ref{theorem:firsintegration}]

We assume that the curve
\[
S_f:=\overline{\{f(z,w)=0\}}\subset\mathbb{CP}^2
\]
exhibits a single dicritical singularity,
say $\xi_0\in S_f\cap S_g$
and $\xi_0\in\mathbb{C}^2$ (see figure~\ref{fig:paper-fig2}).

We choose local coordinates $(x,y)\in U\subset\mathbb{C}^2$
centered at $\xi_0$ such that


$f(x,y)=x$, $g(x,y)=y$ and therefore
\[
\phi(x,y)=\frac{x^p}{y^q}.
\]
We have
\[
\omega_0 = p y\,dx - q x\,dy.
\]

From the hypothesis in the homology \eqref{equation:cohofg}
\[
\int_{\gamma_0}\frac{\Omega}{f g}=0
\ \ \text{if }\gamma_0 \text{ is any closed path}
\]
contained in $U\cap\phi_c$, $\forall c\in\mathbb{C}\cup\{\infty\}$.

\begin{figure}[htbp]
\centering
\includegraphics[width=0.25\linewidth]{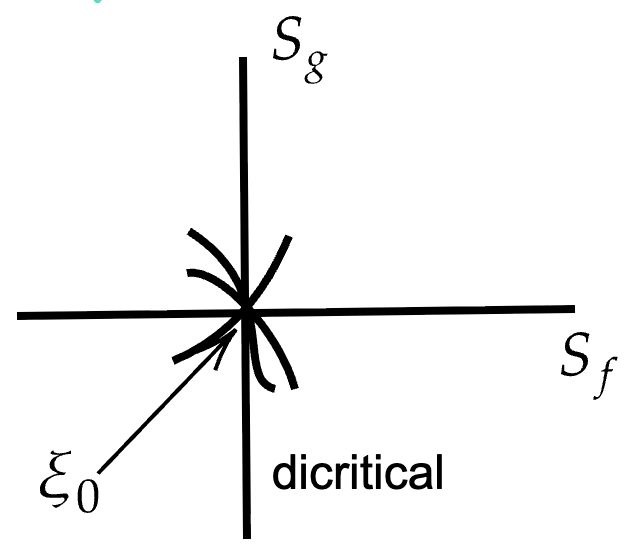}
\caption{The dicritical singularity $\xi_0 \in S_f \cap S_g$.}
\label{fig:paper-fig2}
\end{figure}


This implies by Proposition~\ref{proposition:homology} that for
\[
\eta:=\frac{\Omega}{f g}=\frac{\Omega(x,y)}{x y}
\]
we have
\[
\eta\wedge \omega_0 = dh_U\wedge \omega_0
\]
or, equivalently,
\begin{equation}
\label{equation:hU}
\eta\wedge \frac{\omega_0}{fg} = dh_U\wedge \frac{\omega_0}{fg}
\end{equation}
for some holomorphic function $h_U:U\to\mathbb{C}$.

Now we aim to extend the function
\[
h_U:U\to\mathbb{C}
\]
to the whole projective plane.

This will be done in two steps:

First step: We can extend $h_U$ to a
neighborhood of $S_f\setminus(S_f\cap\mathrm{sing}(\mathcal{F}))=:S_f^{*}$.

We fix a point $Q\in S_f\cap U$, $Q\neq \xi_0$ and
a transverse disk $\Sigma_Q$ centered at $Q$, $\Sigma_Q\subset U$.

Given any point $Q_1\in S_f^{*}$ we choose a
transverse disk $\Sigma_{Q_1}$ centered at $Q_1$ (see figure~\ref{fig:paper-fig3}).
\begin{figure}[htbp]
\centering
\includegraphics[width=0.35\linewidth]{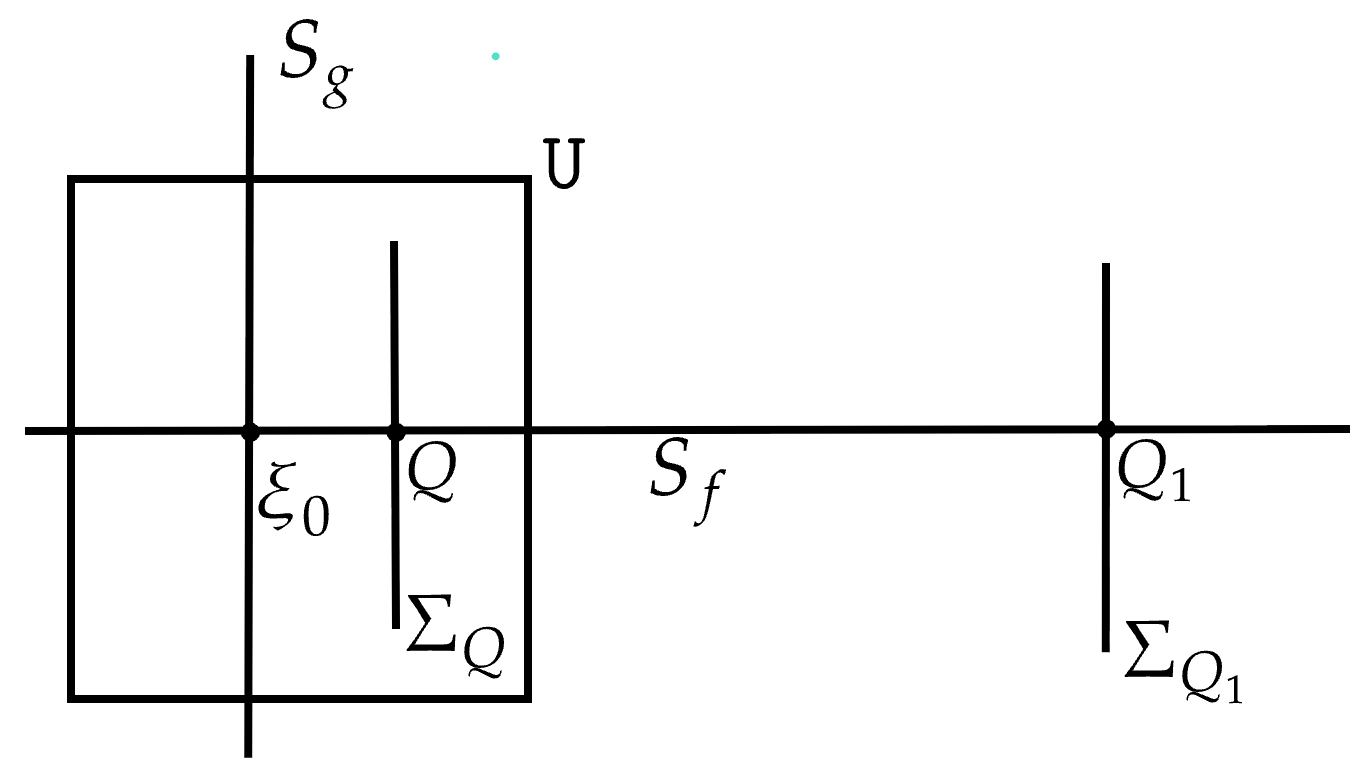}
\caption{Preparation for the construction of the extension.}
\label{fig:paper-fig3}
\end{figure}

Take any path
\[
\delta_ Q^{Q_1}:[0,1]\longrightarrow S_f^{*}
\]
such that $\delta_ Q^{Q_1}(0)=Q$, $\delta_ Q^{Q_1}(1)=Q_1$ (cf. figure~\ref{fig:paper-fig4}).

\begin{figure}[htbp]
\centering
\includegraphics[width=0.4\linewidth]{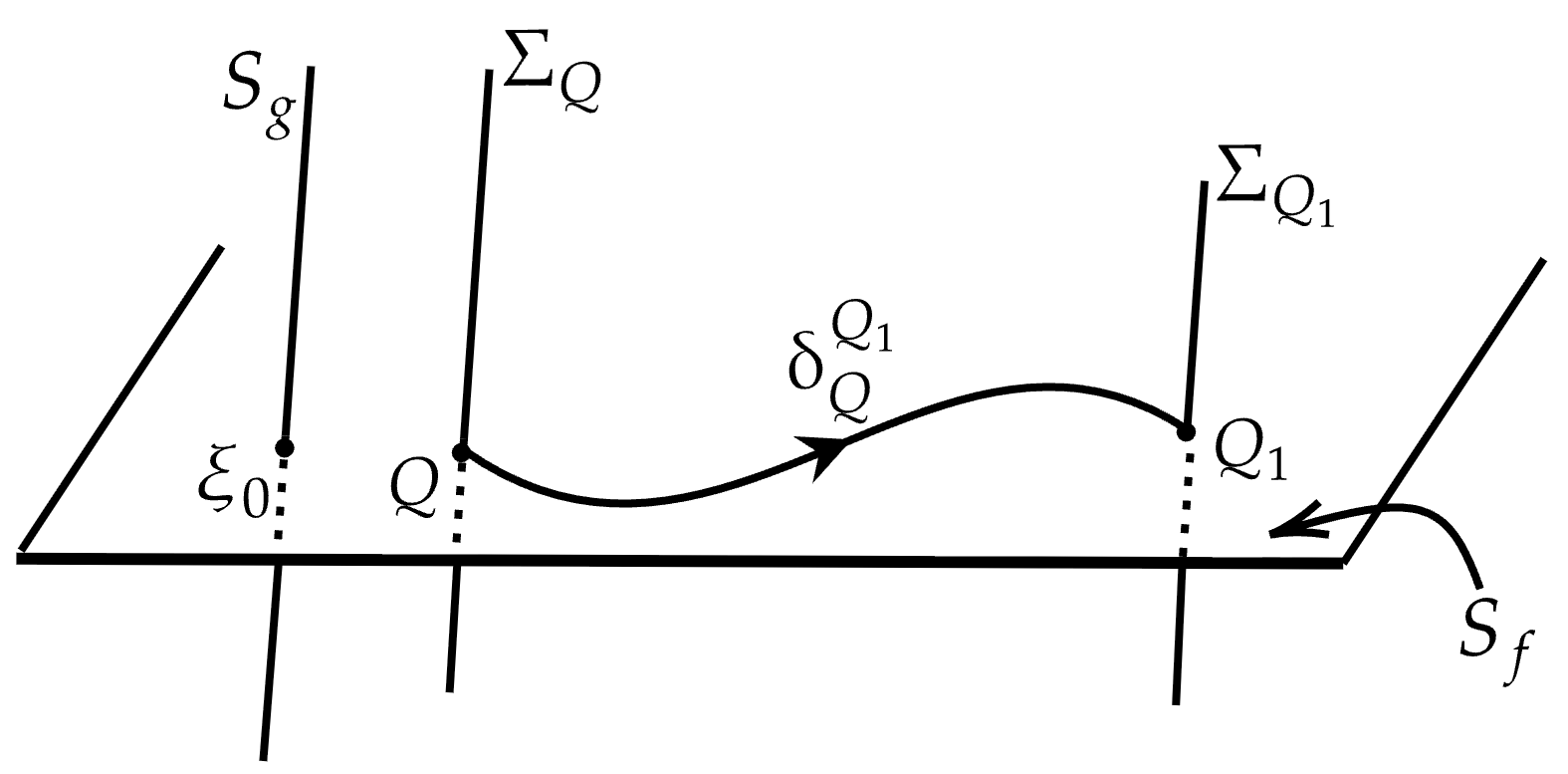}
\label{fig:paper-fig4}
\caption{A path $\delta_Q^{Q_1}$ in $S_f^{*}$ joining $Q$ to $Q_1$, with transverse disks.}
\end{figure}

We then define
\[
h(Q_1):=\int_{\delta_ Q^{Q_1}}\Omega + h_U(Q).
\]

Notice that $\Omega$ is holomorphic so that
its restriction to each curve $\phi_c$ is closed.

Since $\int_\gamma\Omega=0$, $\forall$ closed path
\[
\gamma\subset S_f^{*}
\]
it follows that $h(Q_1)$ is well-defined and does not depend on
the chosen path $\delta_ Q^{Q_1}$ joining $Q$ to $Q_1$
in $S_f^{*}$.

If we take a point $\tilde{Q}_1\in\Sigma_{Q_1}$ close
enough to $Q_1$ then the curve $\phi$ that
contains $\tilde{Q}_1$ (put $c=\phi(\tilde{Q}_1)$)
will also intersect $\Sigma_Q$ at a point $\tilde{Q}$
close to $Q$ (figure~\ref{fig:paper-fig5}).
\begin{figure}[htbp]
\centering
\includegraphics[width=0.4\linewidth]{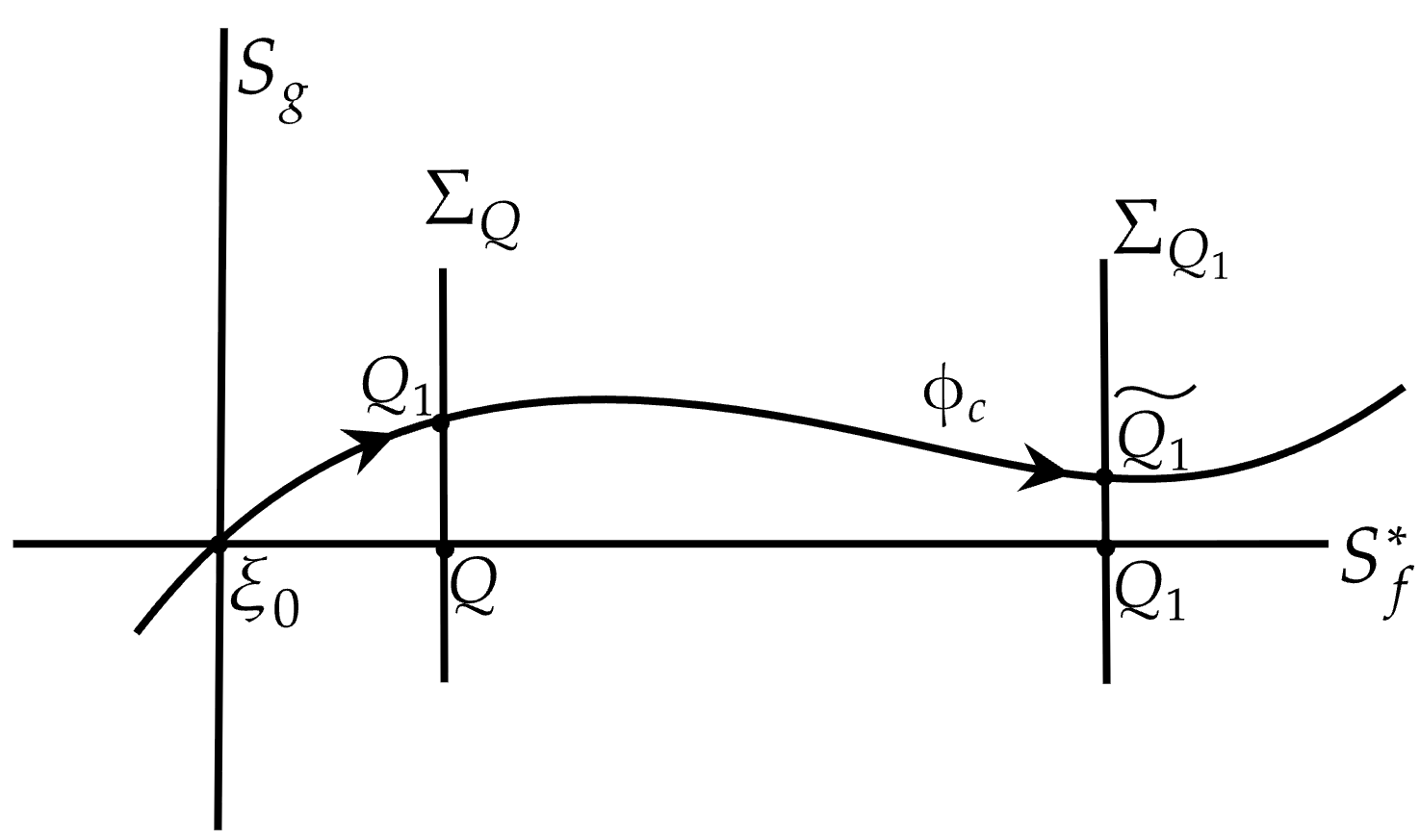}
\caption{A nearby fiber $\phi_c$ intersecting both transverse disks at $\tilde Q$ and $\tilde Q_1$.}
\label{fig:paper-fig5}
\end{figure}

Using these arguments we extend
$h$ to a neighborhood of $S_f^{*}$,
the regular point of $S_f$, together with $U\ni \xi_0$.

\begin{figure}[htbp]
\centering
\includegraphics[width=0.45\linewidth]{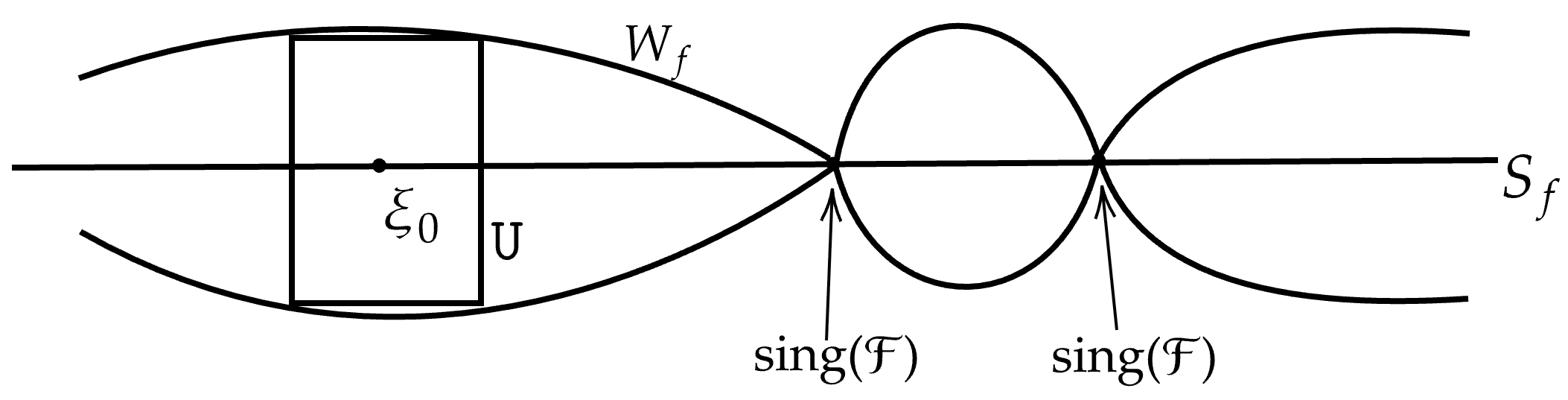}
\caption{Neighborhood $W_f$ of $S_f^{*}$ together with $U\ni \xi_0$.}
\label{fig:paper-fig6}
\end{figure}
We shall denote by $W_f$ such neighborhood (figure~\ref{fig:paper-fig6}).
Take another singular point $P_1\in S_f\cap\mathrm{sing}(\mathcal{F})$.
By the genericity hypothesis of $f(z,w)$ and $g(z,w)$
there are two options:

{\bf 1st}. $P_1$ is non-dicritical. In this case, because of the Morse type pair hypothesis of $f$ and $g$, 
the singularity is a linearizable saddle.
From the saddle-linearizable case (cf. \cite{scarduaIlyshenko}) 
 we know that the restrictions of
$h$ to a disk transverse to $S_f^{*}$ and
centered at a point $Q_1\approx P_1$, $Q_1\neq P_1$
extends to a neighborhood of
the singularity of $P_1$ satisfying still the
cohomological equation.

This shows that $h$ extends to

Then we define
\[
h(\tilde Q_1):=h(\tilde Q)+\int_{\delta_{\tilde Q}^{\tilde Q_1}}\Omega
\]
for any path
\[
\delta_{\tilde Q}^{\tilde Q_1}:[0,1]\to \phi_c
\]
joining $\delta_{\tilde Q}^{\tilde Q_1}(0)=\tilde Q$
to $\delta_{\tilde Q}^{\tilde Q_1}(1)=\tilde Q_1$.

\begin{figure}[htbp]
\centering
\includegraphics[width=0.45\linewidth]{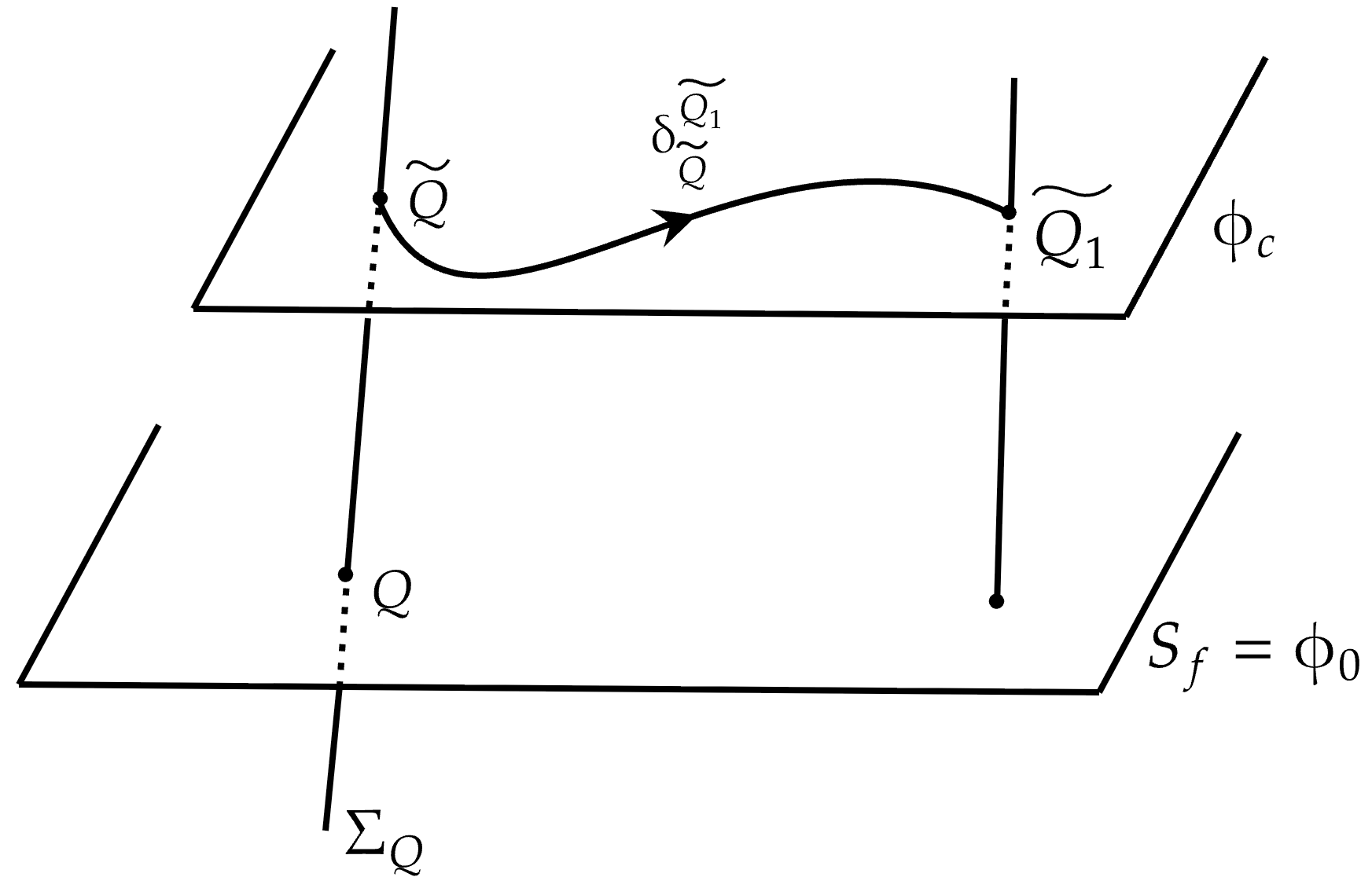}
\label{fig:paper-fig7sete}
\caption{Definition of $h(\tilde Q_1)$ using a path in the fiber $\phi_c$.}
\end{figure}

Observe that $h$ and $h_U$ coincide
in $U$ whenever $h$ is also defined, 
a neighborhood of $P_1$.

{\bf 2nd}. $P_1$ is a dicritical singularity.

In this case $S_f$ connects two dicritical
singularities $(\xi_0 \text{ and } P_1)$.
This case is excluded by hypothesis.
This ends the proof of the existence of
$h$ in a neighborhood $W$ of $S_f$ on $\mathbb{CP}^2$.

By Levi extension theorem  \cite{[C-LN-Sannals]},\cite{[Si]}
this function $h$ extends to $\mathbb{CP}^2$
and it is therefore rational.
By construction $h$ is holomorphic in the affine
part of $S_f$ and therefore $h$ is polynomial.

We have now obtained a polynomial
$h(z,w)\in\mathbb{C}[z,w]$ such that

\[
\frac{\Omega}{f g}\wedge \omega_0
=
dh \wedge \omega_0 .
\]
i.e.,
\[
\eta\wedge \frac{\omega_0}{fg} = dh_U\wedge \frac{\omega_0}{fg}
\]

The theorem follows from Saito-de Rham
lemma \cite{[Saito]}.
\end{proof}

A very similar proof, also based in Proposition~\ref{proposition:homology}, can be given for the following:

\begin{theorem}[second integration lemma]
\label{theorem:secondinteglemma}
Let $f(z,w), g(z,w)$ be homogeneous polynomials
$f,g\in\mathbb{C}[z,w]$ with no common factor.
Suppose that $f,g$ are in general position
and satisfy the Morse type pair condition.

Let
\[
\Omega = A(z,w)\,dz + B(z,w)\,dw
\]
be a polynomial $1$-form in $\mathbb{C}^2$
and assume that
\begin{equation}
\label{equation:homology}
\int_{\gamma_c}\Omega = 0
\quad \text{for every closed path } \gamma_c \subset \phi_c
\quad \forall c\in\mathbb{C}\cup\{\infty\}.
\end{equation}
where where
\[
\phi=f^p/g^q \in \mathbb C(z,w)
\]
Then there are polynomials
\[
a(z,w),\, h(z,w)\in\mathbb{C}[z,w]
\]
such that
\[
\Omega = a \omega_0 + dh
\]
for
\[
\omega_0 = p g\,df - q f\,dg.
\]

If moreover we have
\[
\deg \omega_0 = \deg \Omega
\]
then we may take $a=a_0\in\mathbb{C}$
and
\[
\deg h \le \deg \omega_0 + 1.
\]
\end{theorem}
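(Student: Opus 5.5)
The plan follows the proof of Theorem~\ref{theorem:firsintegration}, now applied directly to $\Omega$ instead of $\Omega/(fg)$. Homogeneity makes the origin the distinguished dicritical point: $f$ and $g$ are homogeneous, in general position and transverse, so at $0\in\mathbb{C}^2$ we choose local coordinates in which $f=x$, $g=y$ (after the embedding trick used in the Lemma of Section~\ref{section:homoleq}). The pencil is then locally $x^p/y^q=c$. The hypothesis \eqref{equation:homology} restricted to the small loops $\gamma_{(x_0,y_0)}$ gives condition (i) of Proposition~\ref{proposition:homology}. Hence near $0$ we get $\Omega=a_U\omega_0+dh_U$ with $a_U,h_U\in\mathcal O_2$, and in particular $\Omega\wedge\omega_0=dh_U\wedge\omega_0$ on a neighborhood $U$ of $0$.

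Next $h_U$ must be globalized, exactly as in the first and second steps of the proof of Theorem~\ref{theorem:firsintegration}.
\begin{enumerate}
\item Fix $Q\in S_f\cap U$ and set $h(\tilde Q_1)=h(\tilde Q)+\int_\delta\Omega$ along paths $\delta$ in the fibers $\phi_c$. This is well defined because $\Omega$ has zero periods on every closed path in every fiber. Since nearby fibers meet both transverse disks, the construction defines $h$ on a neighborhood of $S_f^*$.
\item At the non-dicritical singularities on $S_f$, the Morse type hypothesis makes them linearizable saddles, and the extension across them follows \cite{scarduaIlyshenko}.
\item The case where $S_f$ passes through a second dicritical point is excluded by $(\mathcal M.2)$.
\end{enumerate}
Levi extension \cite{[C-LN-Sannals]},\cite{[Si]} then extends $h$ to a rational function on $\mathbb{CP}^2$. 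It is holomorphic on the affine part, so it is a polynomial, and it satisfies $\Omega\wedge\omega_0=dh\wedge\omega_0$ on all of $\mathbb{C}^2$. Since $\mathrm{sing}(\omega_0)$ is finite (codimension two), the Saito--de Rham division lemma \cite{[Saito]} gives a global $a$ with $\Omega-dh=a\omega_0$. It is a priori holomorphic on $\mathbb{C}^2$, and it is polynomial because it is a quotient of polynomial coefficients that is entire. This proves the first assertion.

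For the degree statement I would argue by homogeneous components. Let $d=\deg f+\deg g$, so $\omega_0$ is a homogeneous form of degree $d-1$, with coefficients of degree $d-1$. Write $a=\sum a_j$ and $h=\sum h_j$ in homogeneous parts. If $\deg a=j>0$, the top-degree piece of $\Omega=a\omega_0+dh$ in degree $d-1+j>\deg\Omega$ must vanish, which gives $a_j\omega_0+dh_{d+j}=0$. Contracting with the radial vector field $R$ and using $i_R\omega_0=(p\deg f-q\deg g)fg$ by Euler's formula, together with $i_R dh_{d+j}=(d+j)h_{d+j}$, we can write $h_{d+j}$ in terms of $a_jfg$. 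Then $dh_{d+j}=-a_j\omega_0$ says the closed form $a_j\omega_0$ is exact. Expanding $d(a_j\omega_0)=0$ yields $da_j\wedge\omega_0=-(p+q)a_j\,df\wedge dg$, which forces $a_j$ to be a first integral-type solution. Comparing degrees, or restricting to a generic line through $0$, should give $a_j=0$.

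The main obstacle is this cancellation argument. When $p\deg f= q\deg g$ the form $\omega_0$ is itself closed up to a factor, and a term $a_j\omega_0$ with $a_j$ a function of $f^p/g^q$ cannot be polynomial. I would handle this by noting that $a_j$ is then homogeneous and constant on the fibers, so $a_j$ is constant by irreducibility of $f,g$, and a constant with $j>0$ is $0$. Once every $a_j$ with $j>0$ is gone, $a=a_0\in\mathbb C$. The identity $dh=\Omega-a_0\omega_0$ has degree at most $\deg\omega_0$, which gives $\deg h\le\deg\omega_0+1$.
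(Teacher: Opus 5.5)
Your existence part is the paper's own argument. The paper proves this theorem by rerunning the proof of Theorem~\ref{theorem:firsintegration} with $\Omega$ in place of $\Omega/(fg)$: the local normal form from Proposition~\ref{proposition:homology} at the dicritical point, continuation of $h_U$ along $S_f$ by path integrals in the fibers, the saddles, Levi extension, and Saito--de Rham. You do exactly this.

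The gap is in the degree statement, where you diverge from the paper. You try to prove that every homogeneous component $a_j$ with $j\ge1$ vanishes, but the decisive step is not carried out. ``Comparing degrees, or restricting to a generic line through $0$, should give $a_j=0$'' is an expectation, not an argument.

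Your diagnosis of where the difficulty lies is also inverted. If $p\deg f=q\deg g$, then $i_R\omega_0=0$. Contracting $a_j\omega_0+dh_{d+j}=0$ with $R$ then gives $(d+j)h_{d+j}=0$, so $h_{d+j}=0$ and $a_j\omega_0=0$ immediately. It is the case $p\deg f\neq q\deg g$ that needs work. There, what is constant on the fibers is $h_{d+j}$ (equivalently $a_jfg$), not $a_j$, and irreducibility of $f,g$ is not what makes it constant. (Minor: $d(a_j\omega_0)=da_j\wedge\omega_0-(p+q)a_j\,df\wedge dg$, so the sign in your closedness identity is off.)

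None of this is needed, because the statement only says one \emph{may take} $a$ constant. The paper's Lemma~\ref{lemma:normalformpolynomial} simply truncates. The identities $a_j\omega_0+dh_{d+j}=0$ for $j\ge1$, which you already wrote down, say exactly that the components of $\Omega$ in degrees $>\deg\omega_0$ vanish. Subtracting them gives $\Omega=a_0\omega_0+d(\sum_{i\le d}h_i)$, with $\deg\sum_{i\le d}h_i\le\deg\omega_0+1$, without ever deciding whether $a_j=0$.

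If you want the stronger conclusion that the $a$ from the first part is itself constant, you can close it as follows:
\begin{enumerate}
\item From $dh_{d+j}=-a_j\omega_0$ you get $dh_{d+j}\wedge\omega_0=0$, so $h_{d+j}$ is constant on leaves.
\item In your coordinates $f=x$, $g=y$ near $0$, every local leaf $\{x^p=cy^q\}$, the axes included, is connected and accumulates at $0$.
\item Hence $h_{d+j}\equiv h_{d+j}(0)=0$ near $0$, and therefore everywhere.
\item Then $a_j\omega_0=0$, which forces $a_j=0$.
\end{enumerate}
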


The proof is a consequence of the proof given for Theorem~\ref{theorem:firsintegration} and of the following two lemmas~\ref{lemma:normalformpolynomial} and ~\ref{lemma:aisconstant}.
\begin{lemma}
\label{lemma:normalformpolynomial}
Let $f,g\in\mathbb{C}[z,w]$ be homogeneous
polynomials without common factors.
Let also $\Omega$ be a polynomial $1$-form
\[
\Omega = A(z,w)\,dz + B(z,w)\,dw,
\quad A,B\in\mathbb{C}[z,w],
\]
and suppose that
\[
\Omega = a \omega_0 + dh
\]
for some polynomials $a,h\in\mathbb{C}[z,w]$.

Suppose that
\[
\deg\Omega=\deg \omega_0=\deg f+\deg g-1
\]
for
\[
\omega_0 = p g\,df - q f\,dg.
\]

Then
\[
\Omega = a_0 \omega_0 + d\tilde h
\]
for some $a_0\in\mathbb{C}$
and polynomial $\tilde h$ of degree $\le \nu+1$.
\end{lemma}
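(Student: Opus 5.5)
The plan is to split everything into homogeneous components and to show that every positive-degree component of $a$ vanishes. Put $d_1=\deg f$, $d_2=\deg g$ and $\nu=d_1+d_2-1$. Then $\omega_0=p g\,df-q f\,dg$ has homogeneous coefficients of degree exactly $\nu$. Write $a=\sum_{j\ge0}a_j$ and $h=\sum_{k\ge0}h_k$ with $a_j,h_k$ homogeneous of degrees $j,k$. Collecting the part of $\Omega=a\omega_0+dh$ whose coefficients have degree $k$ gives
\[
\Omega_k=a_{k-\nu}\,\omega_0+dh_{k+1}.
\]
Since $\deg\Omega=\nu$, we have $\Omega_k=0$ for $k>\nu$, and therefore
\[
a_j\,\omega_0=-dH_j,\qquad H_j:=h_{j+\nu+1},\qquad j\ge1 .
\]
Once we know $a_j=0$ for all $j\ge1$, we get $dh_k=0$ for $k\ge\nu+2$. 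Homogeneous polynomials of positive degree with zero differential vanish, so $h_k=0$ for $k\ge\nu+2$. The conclusion then holds with $a_0\in\mathbb{C}$ and $\tilde h=\sum_{k\le\nu+1}h_k$, which has degree $\le\nu+1$.

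The core step is to show that $a_j\omega_0=dH$, with $H$ homogeneous of degree $j+\nu+1\ge\nu+2$, forces $a_j=0$. First, $dH\wedge\omega_0=-a_j\,\omega_0\wedge\omega_0=0$, so $H$ is a first integral of the pencil $\phi=f^p/g^q$, and $H$ is constant along the fibres $\phi_c$. I then want to deduce that $H\in\mathbb{C}[f,g]$. Each irreducible component of a level set of $H$ is invariant by $\mathcal F(\omega_0)$ and is not dicritical, so it is contained in a fibre $\{f^p=cg^q\}$. Hence, up to a constant, $H=f^{\alpha}g^{\beta}\prod_i(f^p-c_ig^q)^{m_i}$. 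Constancy of $H$ on generic fibres then forces the weighted-degree condition $p\alpha' =q\beta'$ for suitable exponents. Alternatively, one can argue via the Stein factorization of the rational map $\phi$: $H$ factors through $\phi$, $H=R\circ\phi$ with $R$ rational, and polynomiality of $H$ together with homogeneity pins $R$ down. This identification of $H$ as a polynomial in $f,g$ is the step I expect to be the main obstacle. It needs that the generic fibre of $\phi$ is irreducible, which should follow from $(p,q)=1$ and the general position / Morse type pair hypothesis, and it needs care with the special fibres $c=0,\infty$.

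Granting $H=F(f,g)$ with $F\in\mathbb{C}[X,Y]$, the rest is formal. We have $dH=F_X\,df+F_Y\,dg$. Because $f,g$ are coprime homogeneous polynomials, $df\wedge dg\not\equiv0$, so $df$ and $dg$ are generically independent and $f,g$ are algebraically independent. Comparing $F_X\,df+F_Y\,dg=-a_j(p g\,df-q f\,dg)$ coefficientwise gives $F_X=-pa_jg$ and $F_Y=qa_jf$, hence
\[
q\,f\,F_X(f,g)+p\,g\,F_Y(f,g)=0 .
\]
By algebraic independence this is the polynomial identity $qXF_X+pYF_Y=0$ in $\mathbb{C}[X,Y]$. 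Applied to a monomial $X^sY^r$ the operator $qX\partial_X+pY\partial_Y$ gives the eigenvalue $qs+pr\ge0$, and this vanishes only for $s=r=0$, the resonance computation of Section~\ref{section:cohomoeq}. So $F$ is constant, and since $\deg H\ge\nu+2>0$ the homogeneous polynomial $H$ is zero. Then $a_j\omega_0=0$, and $a_j=0$ because $\omega_0\not\equiv0$.

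As a backup for the obstacle step, I would avoid identifying $H$ and work directly from the closedness condition $d(a_j\omega_0)=0$, that is
\[
da_j\wedge\omega_0=-(p+q)\,a_j\,dg\wedge df .
\]
Write $a_j=f^\alpha g^\beta u$ with $u$ coprime to $fg$. This yields $du\wedge\omega_0=\kappa\,u\,df\wedge dg$ with $\kappa=q\alpha+p\beta+p+q>0$. So every factor of $u$ is an invariant curve of $\omega_0$, hence a fibre $f^p-cg^q$. Substituting back and contracting with the radial (Euler) vector field, using $i_R\,dH=(\deg H)H$, should produce a degree contradiction unless $a_j=0$.
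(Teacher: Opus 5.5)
Your first paragraph already proves the lemma. The trouble is that you then make the conclusion depend on the claim $a_j=0$ for all $j\ge1$, which the lemma does not need and which your argument does not establish. Once you know $\Omega_k=0$ for $k>\nu$, and since $a_{k-\nu}=0$ for $k<\nu$, you have
\[
\Omega=\sum_{k\le\nu}\Omega_k=a_0\,\omega_0+\sum_{k\le\nu}dh_{k+1}=a_0\,\omega_0+d\Bigl(\sum_{i\le\nu+1}h_i\Bigr),
\]
which is the statement with $\tilde h=\sum_{i\le\nu+1}h_i$. This is exactly the paper's proof. It groups the components of degree $\ge\nu+1$ (the $a_j\omega_0$ with $j\ge1$ and the $dh_i$ with $i\ge\nu+2$) into a form $\xi$, observes that $\xi$ must vanish because $\deg\Omega=\nu$, and reads off $\Omega=a_0\omega_0+d\tilde h$. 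The lemma only asserts the existence of some constant $a_0$, not that $a$ itself is constant, so the relations $a_j\omega_0=-dH_j$ can simply be discarded.

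As for the core step, what you wrote does not prove it. Your main route needs $H\in\mathbb{C}[f,g]$. You want to get this from irreducibility of the generic fibre and from general position / Morse type hypotheses, but this lemma assumes only that $f,g$ are coprime and homogeneous. Irreducibility in fact fails here: if $pd_1=qd_2\ge2$, every $f^p-cg^q$ is a binary form of degree $\ge2$, hence a union of lines through the origin. A first integral of $\omega_0$ need only be constant on each of these lines, so it factors through $[z:w]$ and not necessarily through $\phi$. Your backup stops at ``should produce a degree contradiction''.

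The stronger claim also does not follow from the literal hypotheses. For $f=1$ one has $\omega_0=-q\,dg$ and $g\,\omega_0=d(-qg^2/2)$ with $g\neq0$, while the lemma still holds. For nonconstant $f,g$, your Euler idea does finish quickly. Contracting $a_j\omega_0=-dH$ with the radial field gives $NH=-(pd_1-qd_2)\,a_jfg$, where $N=\deg H$. If $pd_1=qd_2$ then $H=0$ and $a_j=0$. Otherwise, with $\kappa=(pd_1-qd_2)/N$, one gets $\kappa\,d(a_jfg)=a_j\omega_0$, that is
\[
\kappa\,\frac{da_j}{a_j}=(p-\kappa)\frac{df}{f}-(q+\kappa)\frac{dg}{g}.
\]
Compare residues along a linear factor of $f$ and along a linear factor of $g$, with multiplicities $e,e'\ge1$ in $f,g$ and $s,s'\ge0$ in $a_j$. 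This gives $\kappa(s+e)=pe>0$ and $\kappa(s'+e')=-qe'<0$, a contradiction.
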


\begin{proof}[Proof of the lemma.]
We write
\[
h=\sum_{i\ge0} h_i
\]
as a sum of homogeneous polynomials $h_i$ of degree $i\ge0$.

Similarly, we write
\[
a=\sum_{j\ge0} a_j
\]
with $a_j$ homogeneous of degree $j\ge0$.

By hypothesis $f,g$ are homogeneous
so that $\omega_0$ is homogeneous of degree
\[
\nu=\deg f+\deg g-1.
\]
Moreover, we also assume that $\Omega$ has degree $\nu$.

From $\Omega=a\omega_0+dh$ we have
\[
\Omega
=
\sum_{j\ge0} a_j \omega_0
+
\sum_{i\ge0} d h_i .
\]

Since $\omega_0$ is homogeneous of degree $\nu$,
$a_j\omega_0$ is homogeneous of degree $j+\nu$.

Therefore we can write
\[
\Omega
=
a_0 \omega_0
+
\sum_{j\ge1} a_j \omega_0
+
\sum_{i\le \nu+1} d h_i
+
\sum_{i\ge \nu+2} d h_i .
\]

Let
\[
\xi
:=
\sum_{j\ge1} a_j \omega_0
+
\sum_{i\ge \nu+2} d h_i .
\]

Then $\xi$ is a sum of homogeneous forms,
each of degree $\ge \nu+1$.

On the other hand,
\[
\Theta
:=
a_0 \omega_0
+
\sum_{i\le \nu+1} d h_i
\]
is a sum of homogeneous forms,
each of degree $\le \nu$.

Since $\Omega$ is a form of degree $\nu$
it follows that
\[
\Omega=\Theta.
\]

Hence
\[
\Omega
=
a_0 \omega_0
+
d\!\left(\sum_{i\le \nu+1} h_i\right)
=
a_0 \omega_0 + d\tilde h,
\]
for
\[
\tilde h := \sum_{i\le \nu+1} h_i.
\]
\end{proof}
\begin{lemma}
\label{lemma:aisconstant}
Let $f,g\in\mathbb{C}[z,w]$ be polynomials without common factors.
Let also $\Omega$ be a polynomial $1$-form
\[
\Omega = A(z,w)\,dz + B(z,w)\,dw,
\quad A,B\in\mathbb{C}[z,w],
\]
and suppose that
\[
\Omega = a \omega_0
\]
for some polynomial $a\in\mathbb{C}[z,w]$.
Suppose that $\deg\Omega = \deg \omega_0$.
Then $a\in\mathbb{C}$ is a constant.
\end{lemma}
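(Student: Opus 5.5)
The plan is a direct degree count on the polynomial coefficients, using only that $\mathbb{C}[z,w]$ is an integral domain. Take the degree of a polynomial $1$-form $P\,dz+Q\,dw$ to be $\max\{\deg P,\deg Q\}$, as implicitly used throughout the paper. Write $\omega_0 = P\,dz + Q\,dw$ with
\[
P = p g f_z - q f g_z, \qquad Q = p g f_w - q f g_w .
\]
Then $\Omega = a\omega_0$ says exactly that $A = aP$ and $B = aQ$.

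\textbf{Step 1: $\omega_0\not\equiv 0$.} I will implicitly assume that $f,g$ are not both constant, since otherwise $\omega_0=0$ and the statement is void. Suppose $\omega_0\equiv 0$. On the open set where $g\neq 0$ we have $d(f^p/g^q) = f^{p-1}g^{-q-1}\,\omega_0 = 0$, so $f^p = c\,g^q$ for some constant $c$.
\begin{itemize}
\item If $c=0$, then $f\equiv 0$, and $f,g$ have a common factor unless $g$ is a nonzero constant. In that case $\omega_0 = -q f\,dg=0$ trivially, so this degenerate situation is excluded.
\item If $c\neq 0$, take an irreducible factor $\pi$ of $g$. Then $\pi \mid f^p$, hence $\pi\mid f$, contradicting that $f,g$ have no common factor. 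So $g$ is constant, and then $f^p$ is constant, so $f$ is constant too.
\end{itemize}
Hence, for nonconstant coprime $f,g$, the form $\omega_0$ is not identically zero.

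\textbf{Step 2: the degree identity.} If $a=0$ there is nothing to prove, since $0\in\mathbb{C}$. Otherwise, for every nonzero coefficient of $\omega_0$ the top-degree homogeneous components multiply to a nonzero product, because $\mathbb{C}[z,w]$ has no zero divisors. Thus $\deg(aP) = \deg a + \deg P$ whenever $P\neq 0$, and similarly for $Q$. Taking the maximum gives
\[
\deg\Omega = \deg a + \deg\omega_0 .
\]
The hypothesis $\deg\Omega = \deg\omega_0$ then forces $\deg a = 0$, that is, $a$ is a nonzero constant.

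\textbf{Main obstacle.} The only real point is Step 1: excluding $\omega_0\equiv 0$. This is exactly where the coprimality of $f,g$ is used, and the degenerate case where $f$ or $g$ is constant should be flagged explicitly. Step 2 is routine.
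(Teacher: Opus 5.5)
Your proof is correct and is the same degree count the paper has in mind when it calls the lemma ``Immediate'': with the degree of a $1$-form taken as the maximal degree of its coefficients (the paper's convention), $\deg(a\omega_0)=\deg a+\deg\omega_0$ forces $\deg a=0$. What you add is the check that $\omega_0\not\equiv 0$, which is the only place where coprimality of $f,g$ enters, together with the non-constancy that the statement tacitly needs. The case $g\equiv 0$, where your open set $\{g\neq 0\}$ is empty, falls under the same exclusion, since coprimality then forces $f$ to be a unit.
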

\begin{proof} Immediate.
\end{proof}

\section{Cusps}
We shall now apply our techniques to the study of polynomial 1-forms on $\mathbb C^2$, which define 
a cusp singularity at the origin $0\in \mathbb C^2$. 
\label{section:cusps}
\subsection{Cusp singularities}

We recall that a singularity
\[
\Omega(x,y)=a(x,y)\,dx + b(x,y)\,dy
\in \Omega^1(\mathbb{C}^2,0)
\]
at the origin $0\in\mathbb{C}^2$
is called a {\it cusp type singularity}
if it has a nilpotent first jet of the form
\[
j^1\Omega = y^{k-1}dy,
\]
and exhibits a single separatrix
necessarily of the form
\[
y^k + x^\ell = 0,
\]
and is a generalized curve in the sense
established by \cite{[C-LN-S]}.

In this case, according to \cite{Takens},  we may choose formal coordinates and rewrite $\Omega$ as

\[
\Omega
=
d(y^k + x^\ell)
+
B(x,y)(\ell y\,dx - kx\,dy)
\]

for some formal holomorphic $B(x,y)\in\mathcal{O}_2$. We shall say that $\Omega$ is a 
{\it cusp form} if this formal normal form is in fact convergent. This is the case for 
$\ell=2, k=3$ for instance (\cite{Cerveau-Moussu}). More details can be found in \cite{Takens},\cite{Cerveau-Moussu} and \cite{CerveauLoray1998} (appendix).

\begin{definition}
\label{definition:polynomialcuspform}
Assume now that $\Omega$ is polynomial.
We shall say that $\Omega$ {\it is a polynomial
cusp form} if
$
\Omega
=
d(f^p + g^q)
+
B(p g\,df - q f\,dg)
$ for some polynomials $f,g,B$ where
$p,q\in\mathbb{N}$,
$\langle p,q\rangle=1$. This is called a {\it constant coefficient } cusp form if $B$ is a constant.
\end{definition}

\subsection{Examples}
We study a couple of important examples on the zoology of polynomial cusp forms.
We start by showing that  {\em not all cusp forms $\Omega$
satisfying the homological conditions
\[
\int_{\gamma}\Omega=0,
\quad \forall\gamma\subset\phi_c,\ \forall c,
\]
admit a meromorphic first integral.}

\begin{example}
Given polynomials $f(z,w),g(z,w)$ and relatively prime numbers
$p,q\in\mathbb{N}$ we put
\[
\omega_0 = p g\,df - q f\,dg.
\]
Let $a(z,w)$ be a given polynomial.
For a given polynomial $h(z,w)$ we finally define
\[
\Omega = a \omega_0 + dh.
\]

\begin{claim}
\[
\int_{\gamma_c}\Omega = 0
\]
for every closed path $\gamma_c \subset \phi_c
=
\left\{\frac{f(z,w)^p}{g(z,w)^q}=c\right\}
\subset \mathbb{C}^2.
$
\end{claim}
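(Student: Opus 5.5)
This claim is exactly the statement of Lemma~\ref{lemma:homologyglobal}, so the plan is to reproduce that argument: split $\Omega=a\omega_0+dh$ and show that each summand integrates to zero over any closed path $\gamma_c\subset\phi_c$.

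For the exact part, $\int_{\gamma_c}dh=0$ by the fundamental theorem of calculus. Indeed, $h$ is a polynomial, hence a global single-valued function on $\mathbb C^2$, and $\gamma_c$ is closed.

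For the part $a\omega_0$, I show that $\omega_0$ restricts to zero on $\phi_c$. Parametrize $\gamma_c$ by $\gamma(t)=(z(t),w(t))$, $t\in[0,1]$.

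\begin{itemize}
\item \emph{Case $c\in\mathbb C^*$.} Along $\gamma$ we have $f(\gamma(t))^p=c\,g(\gamma(t))^q$. Differentiating in $t$ gives
\[
p f^{p-1}\tfrac{d}{dt}f(\gamma)=q c\, g^{q-1}\tfrac{d}{dt}g(\gamma).
\]
Multiply by $g$, substitute $c\,g^q=f^p$, and divide by $f^{p-1}$, which is nonzero because $f\neq 0$ on $\phi_c$ when $c\neq 0$. This yields
\[
p\, g\,\tfrac{d}{dt}f(\gamma)-q\, f\,\tfrac{d}{dt}g(\gamma)=0,
\qquad\text{that is,}\qquad \omega_0(\gamma(t))\cdot\gamma'(t)=0.
\]
\item \emph{Case $c=0$ or $c=\infty$.} Here $\phi_c$ lies in $\{f=0\}$ or $\{g=0\}$ (away from the indeterminacy points). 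On $\{f=0\}$ we get $\omega_0|_{\gamma}=p\,g\,df|_\gamma=0$, since $f\circ\gamma\equiv 0$. The case $\{g=0\}$ is symmetric.
\end{itemize}

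In every case $\gamma^*\omega_0=0$, so $\gamma^*(a\omega_0)=a(\gamma)\,\gamma^*\omega_0=0$ and therefore $\int_{\gamma_c}a\omega_0=0$. Adding the two parts gives the claim.

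The only mildly delicate point is the division by $f$ or $g$ at points where they vanish, and the treatment of the special fibers $c\in\{0,\infty\}$. Splitting the fibers into the two cases above, as I did, handles this. Alternatively, one can argue by continuity, since $\omega_0\wedge d\phi=0$ on the open set where $fg\neq0$.
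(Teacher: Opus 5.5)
Your proof is correct and follows the paper's argument: $\int_{\gamma_c} dh=0$ by the fundamental theorem of calculus, and $\gamma^*\omega_0=0$ follows by differentiating $f(\gamma(t))^p=c\,g(\gamma(t))^q$ along the path. You also treat the fibers $c\in\{0,\infty\}$ and the division by $f^{p-1}$ explicitly, which the paper's proof passes over silently.
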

\begin{proof}
\[
\int_{\gamma_c} dh = 0
\]
as we know from the Fundamental Theorem of Calculus.

Now
\[
\int_{\gamma_c} a \omega_0
=
\int_0^1
a(\gamma(t))\, \omega_0(\gamma(t))\cdot \gamma'(t)\, dt
\]
for any parametrization $\gamma(t)$, $0\le t\le1$ of $\gamma_c\subset\phi_c$.

But if we write $\gamma(t)=(z(t),w(t))$
then since $\gamma\subset\phi_c
=
\left\{\frac{f(z,w)^p}{g(z,w)^q}=c\right\}$
we conclude that
\[
f(\gamma(t))^p = c\, g(\gamma(t))^q,
\quad \forall t,
\]
and therefore
\[
\frac{d}{dt}
\left(
\frac{f(\gamma(t))^p}{g(\gamma(t))^q}
\right)=0.
\]

This implies
\[
p\,\frac{d}{dt}\big(f(\gamma(t))\big)\,g(\gamma(t))
-
q\,f(\gamma(t))\,\frac{d}{dt}\big(g(\gamma(t))\big)
=0,
\]
and then
\[
\omega_0(\gamma(t))\cdot \gamma'(t)=0.
\]

Therefore
\[
\int_{\gamma_c} a \omega_0 = 0.
\]

This shows the claim.
\end{proof}

Let us  proceed with a more complete example.

Let
\[
\Omega = a(z,w)(2w\,dz-3z\,dw) + d(z^2+w^3),
\quad a(z,w)\in\mathbb{C}[z,w].
\]

Then
\[
\Omega = d(z^2+w^3) + a(z,w)(2w\,dz-3z\,dw)
\]
describes a cusp type singularity
and for $a(z,w)\neq0$ we have a single separatrix
given by
\[
z^2+w^3=0.
\]
Its reduction of singularities produces
a $3$ components exceptional divisor.
The central divisor has a holonomy group generated by two maps

\[
\gamma_1,\gamma_2\in \mathrm{Diff}(\mathbb{C},0)
\quad\text{s.t.}\quad
\gamma_1^2=\gamma_2^3=\mathrm{Id}.
\]

By suitable choices of $a(z,w)$ we may obtain holonomy
groups
\[
H<\mathrm{Diff}(\mathbb{C},0),
\]
\[
H=\langle\gamma_1,\gamma_2\rangle
\]
with $H$ non-solvable
and, in particular, $H$ is not finite,
so that the corresponding foliation
\[
\Omega=0
\]
does not admit a meromorphic
or holomorphic first integral (see \cite{mattei-moussu}).

This shows that {\em not all cusp  $1$-forms $\Omega$
satisfying the homological conditions
\[
\int_{\gamma}\Omega=0,
\quad \forall\gamma\subset\phi_c,\ \forall c,
\]
admit a meromorphic first integral.}
\end{example}
Our next example shows that a polynomial cusp form with constant coefficients exhibits a liouvillian first integral, of hypergeometrical type: 
\begin{example}[hypergeometrical first integral]
\label{example:hypergeometricalcusp}
Let us give a more explicit example.
We take the $1$-form
\[
\Omega = 2y\,dx - 3x\,dy + d(x^2+y^3).
\]

Let us put
\[
\psi = x^2+y^3,
\quad
\phi = \frac{x^2}{y^3}.
\]

Then from $(x,y)$ to $(\phi,\psi)$
we have the following ramified
change of coordinates:

\[
\left\{
\begin{aligned}
x &= \pm \sqrt{\frac{\phi\psi}{\phi+1}},\\
y &= \left(\frac{\psi}{\phi+1}\right)^{1/3}.
\end{aligned}
\right.
\]

From
\[
\Omega = (2y\,dx - 3x\,dy) + d(x^2+y^3)
\]
we get
\[
\Omega
=
\frac{d(x^2/y^3)}{x^2/y^3}
+
d(x^2+y^3).
\]

Hence
\[
\Omega = \frac{d\phi}{\phi} + d\psi.
\]

Substituting the change of coordinates,

\[
\Omega
=
\pm
\left(\frac{\phi\psi}{\phi+1}\right)^{1/2}
\left(\frac{\psi}{\phi+1}\right)^{1/3}
\frac{d\phi}{\phi}
+
d\psi
\]

\[
=
\pm
\frac{\phi^{1/2}\psi^{5/6}}{(\phi+1)^{5/6}}
\frac{d\phi}{\phi}
+
d\psi
\]

\[
=
\psi^{5/6}
\left[
\pm
\frac{\phi^{-1/2}}{(\phi+1)^{5/6}}\,d\phi
+
\frac{d\psi}{\psi^{5/6}}
\right].
\]

The given $1$-form admits the Liouvillian
first integral

\[
\mathfrak{F}=\int \frac{d\psi}{\psi^{5/6}}
\;+\;
\int
\frac{d\phi}{\phi^{1/2}(\phi+1)^{5/6}}.
\]

Let us take a closer look at the
second integral.

For the classical substitution
\[
t=\frac{\phi}{1+\phi}
\]
we obtain
\[
\phi=\frac{t}{1-t},
\qquad
d\phi=\frac{dt}{(1-t)^2}.
\]

Then
\[
\int
\frac{d\phi}{\phi^{1/2}(1+\phi)^{5/6}}
=
B_t\!\left(\frac12,\frac13\right)
+
C,
\]
where $B_t(\alpha,\beta)$ is the incomplete
Beta function or, in terms of the hypergeometric function,
we have

\[
\int
\frac{d\phi}{\phi^{1/2}(1+\phi)^{5/6}}
=
2\sqrt{\frac{\phi}{1+\phi}}\;
{}_2F_1
\!\left(
\frac12,\frac23;\frac32;\frac{\phi}{1+\phi}
\right)
+
C.
\]
Thus $\mathcal{F}(\Omega)$ exhibits the liouvillian first integral (\cite{C-S ETDS})
\[
f
= 6\psi^{1/6}
+ B_t\!\left(\frac12,\frac13\right)+C,
\]
where
\[
\psi=x^2+y^3,
\qquad
t=\frac{\phi}{1+\phi}
=\frac{x^2/y^3}{1+x^2/y^3}
=\frac{x^2}{x^2+y^3}.
\]

Therefore
\[
\mathfrak {F}
= 6(x^2+y^3)^{1/6}
+ B_{\frac{x^2}{x^2+y^3}}\!\left(\frac12,\frac13\right)+C.
\]

In terms of the hypergeometric function we have
\[
I
= \int \frac{d\phi}{\phi^{1/2}(1+\phi)^{5/6}}
= 2\sqrt{\frac{\phi}{1+\phi}}\;
{}_2F_1\!\left(\frac12,\frac23;\frac32;\frac{\phi}{1+\phi}\right).
\]

For the hypergeometric function
\[
{}_2F_1(a,b;c;z)
:= \sum_{m=0}^{\infty}
\frac{(a)_m(b)_m}{(c)_m}\,z^m,
\quad |z|<1.
\]

In these terms the first integral $\mathfrak {F}$ is given by
\[
\mathfrak {F}
= 6(x^2+y^3)^{1/6}
+ 2\sqrt{\frac{x^2/y^3}{1+x^2/y^3}}\;
{}_2F_1\!\left(\frac12,\frac23;\frac32;\frac{x^2}{x^2+y^3}\right).
\]

\bigskip

\end{example}

\begin{remark}{\rm 
The classical incomplete Beta function is defined as

\[
B_x(\alpha,\beta)
=
\int_0^x
t^{\alpha-1}(1-t)^{\beta-1}\,dt,
\]
while the hypergeometric function is given by

\[
{}_2F_1(a,b;c;z)
=
\sum_{m=0}^{\infty}
\frac{(a)_m(b)_m}{(c)_m m!}\,
z^m,
\]

for $|z|<1$, which is the fundamental solution
of the second order linear ODE below

\[
z(1-z)u''
+
[c-(a+b+1)z]u'
-
abu
=
0,
\]

called Gauss hypergeometric equation.

Recall that $(\alpha)_m$ stands for the rising factorial or Pochhammer symbol
\[
(\alpha)_m = \alpha(\alpha+1)\cdots(\alpha+m-1),
\]
so that ${}_2F_1(a,b;c;z)$ is defined for complex numbers $a,b,c$ and $c\notin\mathbb Z_0^{-}$.

}
\end{remark}

\subsection{Globalization theorems for cusp forms}

Let us now prove the existence of a polynomial cusp form under 
certain local cusp condition and the vanishing of the form in the 
first homology of a suitable pencil.

\begin{proof}[Proof of Theorem~\ref{theorem:firstglobalizationcusp}]

Let $\Omega$ be given as in the statement. 
From the hypothesis
\[
\int_{\gamma_c} \Omega = 0,
\qquad
\forall\, \gamma_c \subset \phi_c,
\qquad
\phi_c=\left\{\frac{f(z,w)^2}{g(z,w)^3}=c\right\},
\]
$c\in\mathbb{C}\cup\{\infty\}$,
we obtain
from Theorem~\ref{theorem:secondinteglemma}  
\[
\Omega = a\omega_0 + dh
\]
for some $a(z,w), h(z,w)\in \mathbb C[z,w]$. We may assume that $h(0,0)=0$.
Since the cusp $f^2 + g^3$ is invariant we conclude that $(f^2 + g^3) \big|(f^2 + g^3)\wedge dh(z,w)$ in $\mathbb C[z,w]$ and 
therefore $(f^2 + g^3) \big|h(z,w)$ so that we have $h=(f^2 + g^3) h_1$ for some $h_1 \in \mathbb C[z,w]$. 
 This gives 
 \[ 
 \Omega= a(3fdg - 2g df ) + d(h_1(f^2 + g^3))
 \]
Since $\deg \Omega \leq \deg (f^2 + g^3)$ we conclude that 
$h_1 \in \mathbb C$ is constant. This shows that $\Omega$ is of the form 
\[ 
\Omega = \lambda [d(f^2 + g^3) + \mu (3fdg - 2g df)]
\]
for some $\lambda, \mu \in \mathbb C$.   
This already proves the first part of the theorem. 
As for the second part, put $\Omega_0:=d(x^2 + y^3) + \mu(3xdy - 2 y dx)$. Then 
$\Omega=\tau^*(\Omega_0)$ for the polynomial map 
$\tau:=(f,g)$. As we have seen $\Omega_0$ admits a first integral of liouvillian hypergeometric type and therefore 
the same will hold for $\Omega$.
\end{proof}
Now we prove that a  polynomial 1-form leaving invariant a cusp $f^p + g ^q=0$ and vanishing in the 1st homology of $f^p/g^q$, having a controled degree, admits a liouvillian 
first integral of hypergeometrical type:
\begin{proof}[Proof of Theorem~\ref{theorem:secondglobalizationcusp}]
For a matter of simplicity we shall consider $p=2$ and $q=3$.
Since $(f,g)$ are generic we may assume that the restrictions $(f\big|_U, g\big|_U)$ to some 
open neighborhood $U$ of the origin $0 \in \mathbb C^2$ are a coordinate system. 
Then, the local hypothesis at the origin implies that 
$\Omega = d(f^2+g^3) + b(2g\,df - 3f\,dg)$ for some $b \in \mathcal O_2$.
On the other hand, by the triviality of $\Omega$ in the first homology groups 
of the fibers $\phi_c$ implies that 
\[
\Omega= a(2g\,df - 3f\,dg) + dh= a \omega_0 + dh
\]
for some $a, h \in \mathbb C[z,w]$.  

Comparing these two writings we obtain
\[
a(2g\,df - 3f\,dg) + dh
=
d(f^2+g^3) + b(2g\,df - 3f\,dg).
\]

Hence
\[
(a-b)(2g\,df - 3f\,dg)
=
d\big[(f^2+g^3) - h\big].
\]

Taking exterior derivatives we obtain
\[
d\!\left[(a-b)fg\right]
\wedge
d\!\left(\frac{f^2}{g^3}\right)
=0.
\]

Thus
\[
d[(a-b)fg] \wedge d(f^2/g^3)=0,
\]
which implies (cf. Stein factorization theorem \cite{Gr-Re})
\[
(a-b)fg = \gamma\!\left(\frac{f^2}{g^3}\right)
\]
for some one-variable function $\gamma$.

Since $f,g$ are polynomial functions in $\mathbb{C}^2$
we conclude that either
\[
\frac{f^2}{g^3}=\lambda \in \mathbb{C}
\quad\text{is constant}
\]
or $\gamma$ is constant.

But $f^2=\lambda g^3$ implies $f$ and $g$ have common
factors, contradiction.

Thus we must have
\[
\gamma=\gamma_0=\text{constant}
\]
and therefore
\[
(a-b)fg=\gamma_0,
\]
which implies $\gamma_0=0$ and hence
\[
a=b.
\]

From
\[
\Omega = a\omega_0 + dh
=
d(f^2+g^3) + b\omega_0
\]
we obtain
\[
dh = d(f^2+g^3),
\]
and therefore we may assume that
\[
h = f^2 + g^3.
\]

This gives us a global writing 
\[
\Omega = a\omega_0 + d(f^2+g^3)
\]
as stated in the first part. The degree restriction on $\deg \Omega$ implies that 
$a$ is a constant (cf. Lemma~\ref{lemma:aisconstant}).  The existence of a liouvillian hypergeometrical type first integral is then a consequence of Example~\ref{example:hypergeometricalcusp}.  

\end{proof}


\section{The (local) case of dimension \(m \ge 2\).}

We shall now investigate pencils \(f^p/g^q = c\) in dimension \(m \ge 2\).
We put
\[
\Omega_0 = p g df - q f dg
\]
where \(f,g \in \mathcal O_m\) are holomorphic germs in general position at \(0 \in \mathbb C^m\).
We assume that \(f(0)=g(0)=0\).

\begin{proposition}
\label{proposition:cohomologyeqdimm}
Let \(\eta\) be a meromorphic 1-form in a neighborhood of the origin
\(0 \in \mathbb C^m\), having simple poles and poles set
\[
(\eta)_\infty \subset \{ f g = 0 \}.
\]

Suppose that
\[
d\eta \wedge \Omega_0 = 0.
\]

Then we have the following cases:

(i) \(p>1, q>1 \Rightarrow
\eta = a \frac{\Omega_0}{fg}
+ \lambda \frac{df}{f}
+ \mu \frac{dg}{g}
+ dh.
\)

(ii) \(p=1<q \Rightarrow
\eta = a \frac{\Omega_0}{fg}
+ \lambda \frac{df}{f}
+ \mu \frac{dg}{g}
+ c \frac{g^{q-1}}{f} dg
+ dh.
\)

(iii) \(q=1<p \Rightarrow
\eta = a \frac{\Omega_0}{fg}
+ \lambda \frac{df}{f}
+ \mu \frac{dg}{g}
+ \tilde c \frac{f^{p-1}}{g} df
+ dh.
\)

(iv) \(p=q=1 \Rightarrow
\eta = a \frac{\Omega_0}{fg}
+ \lambda \frac{df}{f}
+ \mu \frac{dg}{g}
+ \alpha \frac{df}{g}
+ \beta \frac{dg}{f}
+ dh.
\)
\end{proposition}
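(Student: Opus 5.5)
Since $f,g$ are in general position with $f(0)=g(0)=0$, we may choose local coordinates $(x,y,z_3,\dots,z_m)$ at $0\in\mathbb C^m$ with $f=x$ and $g=y$. In these coordinates $\Omega_0=p y\,dx-qx\,dy$ involves only $x,y$, and the pencil is $\phi=x^p/y^q$. The plan is to reduce to the two-dimensional cohomological analysis of Section~\ref{section:cohomoeq}, treating $z=(z_3,\dots,z_m)$ as holomorphic parameters, and then to control the $dz_j$-components using the hypothesis $d\eta\wedge\Omega_0=0$.

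\textbf{Step 1: decompose $\eta$.} Write $\eta=\eta_{xy}+\sum_j c_j\,dz_j$ and expand the coefficients in Laurent series in $x,y$ with simple poles along $xy=0$ and holomorphic coefficients in $z$. Subtract the resonant part $R(\eta_{xy})$, computed fibrewise as in the Proposition of Section~\ref{section:cohomoeq}. Its coefficients $\lambda,\mu,c,\tilde c,\alpha,\beta$ may a priori depend on $z$. Solving the cohomological equation coefficientwise, $h_{nm}=(q a_{n-1,m}+p b_{n,m-1})/(nq+mp)$, with $z$ as a parameter gives $h_0(x,y,z)$ holomorphic. Then $\eta-R-d h_0$ has $dx,dy$-part wedging to zero against $\Omega_0$.

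\textbf{Step 2: use the integrability hypothesis.} The hypothesis $d\eta\wedge\Omega_0=0$ is what forces the coefficients to be constant. Look at the $dz_j\wedge dx\wedge dy$ components of $d\eta\wedge\Omega_0$. The term $\partial_{z_j}\lambda\, dz_j\wedge \frac{dx}{x}\wedge\Omega_0$ contributes $-q\,\partial_{z_j}\lambda\, dz_j\wedge dx\wedge dy$ plus similar terms, and the non-logarithmic resonant coefficients contribute monomials $y^q/x$, $x^p/y$, $x/y$, $y/x$ times $\partial_{z_j}$ of the coefficient. The non-resonant part contributes only non-resonant monomials, because $\Omega_0$ is homogeneous of weight zero with respect to the weights $(q,p)$. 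Comparing monomials therefore gives $\partial_{z_j}c=\partial_{z_j}\tilde c=\partial_{z_j}\alpha=\partial_{z_j}\beta=0$ and $\partial_{z_j}(q\lambda+p\mu)=0$. The remaining freedom in $\lambda,\mu$ along the direction $(p,-q)$ is harmless, since $p\frac{dx}{x}-q\frac{dy}{y}=\Omega_0/(xy)$ can be absorbed into $a$. So constants can be arranged.

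\textbf{Step 3: identify the remainder.} After Step 2, set $\theta:=\eta-R-dh_0$, which is holomorphic. Its $dx,dy$-part is divisible by $\Omega_0$ via the two-variable Saito--de Rham argument done with parameters, and $d\theta\wedge\Omega_0=0$. Write $\theta=a\,\Omega_0/(xy)+\sum_j c_j\,dz_j$ after modification. The $dz_j\wedge dx\wedge dy$ part of $d\theta\wedge\Omega_0$ then reads $(\partial_x c_j\, dx+\partial_y c_j\, dy)\wedge\Omega_0=0$. Hence each $c_j$ is a function of $z$ and of the first integral $x^p/y^q$ only, and holomorphy forces $c_j=c_j(z)$. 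Finally, the $dz_i\wedge dz_j\wedge\cdot$ components give $\partial_{z_i}c_j=\partial_{z_j}c_i$ modulo $\Omega_0$, so $\sum_j c_j\,dz_j=dk(z)$. Set $h=h_0+k$.

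\textbf{Main obstacle.} The hardest part is Step 3, and in particular making the Saito--de Rham division work uniformly in the parameters $z$. The form $\Omega_0$ has codimension-two singular set $\{x=y=0\}$ in $\mathbb C^m$, so the de Rham--Saito lemma applies directly in $\mathcal O_m$. I would first try to invoke it in $\mathcal O_m$ on the form $\eta-R-dh_0$ rather than fibrewise. The $dz$-components then have to be handled through the integrability condition, as above.
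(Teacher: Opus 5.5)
\textbf{There is a genuine gap: your holomorphy claims are false, and in cases (ii)--(iv) this breaks Step~3.}

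\textbf{Step~1.} The coefficient $F=-(qxA+pyB)$ of $\eta_{xy}\wedge\Omega_0$ contains Laurent monomials $x^{-1}y^m$ and $x^n y^{-1}$, because $A$ and $B$ have simple poles. For the non-resonant ones, your formula $h_{nm}=-F_{nm}/(qn+pm)$ yields polar terms. So $h_0$ is not holomorphic, and neither is $\theta$.

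No better choice of $h$ exists. Take $p,q>1$ and the closed form $\eta=\frac{y}{x}\,dy$, so $d\eta\wedge\Omega_0=0$. Wedging any decomposition of type (i) with $\Omega_0$ kills the $a$-term and gives $py^2=x\,(q\lambda+p\mu+qxh_x+pyh_y)$. This is impossible on $\{x=0\}$ if $h$ is holomorphic. The actual solution is $h=\frac{p}{2p-q}\frac{y^2}{x}$, $a=\frac{1}{2p-q}\frac{y^2}{x}$.

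So the most one can obtain, and what the proposition must mean, is $a,h$ with simple poles along $fg=0$, i.e.\ $xy\,a$ and $xy\,h$ holomorphic. With that bookkeeping your Steps~1 and~2 go through:
\begin{itemize}
\item The division is explicit: $a=x\beta_x/p$, where $\beta=\beta_x\,dx+\beta_y\,dy$ is the $dx,dy$-part of $\eta-R-dh_0$.
\item The resonance comparison in Step~2 is correct. The $dz_j$-components only contribute $(qx\partial_x+py\partial_y)c_j$, which has no resonant monomials.
\end{itemize}

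\textbf{Step~3.} As written, this step fails in cases (ii)--(iv). The $dz_j$-coefficients may have simple poles, so $(qx\partial_x+py\partial_y)c_j=0$ only says that $c_j$ is a combination of resonant Laurent monomials with coefficients depending on $z$. In case (i) the only such monomial is the constant, so your conclusion $c_j=c_j(z)$ survives there. But the resonant monomials also include $y^q/x=\phi^{-1}$ when $p=1$, $x^p/y$ when $q=1$, and $x/y,\ y/x$ when $p=q=1$.

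For instance, $\eta=\frac{y^q}{x}\,dz_3$ with $p=1$ satisfies $d\eta\wedge\Omega_0=0$. Yet among the listed terms only $dh$ has a $dz_3$-component, which forces $\partial_{z_3}h=y^q/x$.

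The missing step is as follows. Closedness in $z$ lets you write $\sum_j\kappa_j\,dz_j=d_z\kappa$. Then $\frac{y^q}{x}\,d_z\kappa=d(\kappa\frac{y^q}{x})+\kappa\frac{y^q}{x}\frac{\Omega_0}{xy}$, using $d(y^q/x)=-\frac{y^q}{x}\frac{\Omega_0}{xy}$. This again produces $a,h$ with simple poles, and the other resonant monomials are handled the same way.

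\textbf{Comparison with the paper.} The paper never does this computation. It restricts $\eta$ to a generic plane, applies the two-dimensional proposition of Section~\ref{section:cohomoeq}, and extends by the Berthier--Cerveau theorem. Your parametric route is more elementary and self-contained, and it exposes exactly these extra terms. Note that the two-dimensional proposition the paper relies on asserts $a,h\in\mathcal O_2$ and already fails for $\frac{y}{x}\,dy$. So the holomorphy defect is shared with the paper's own proof.
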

\begin{proof}

We consider a holomorphic embedding
\[
\delta_0 : (\mathbb C^2,0) \to (\mathbb C^m,0)
\]
in general position with respect to \(\Omega_0, f, g\).
We put \(\eta^* = \delta_0^* \eta\) for the pull-back of \(\eta\) by \(\delta_0\).
Let also
\[
f^* = f \circ \delta_0,
\quad
g^* = g \circ \delta_0.
\]

Then, according to the proposition in dimension two, we have the following cases.

1st: \(p>1, q>1\):

\[
\eta^* = a^* \frac{\Omega_0^*}{f^* g^*}
+ \lambda \frac{df^*}{f^*}
+ \mu \frac{dg^*}{g^*}
+ dh^*.
\]

Since \(d\eta \wedge \Omega_0 = 0\), by the extension theorem
\cite{[Cerveau--Berthier]}(Extension theorem p.405, considering the hypothesis $d \eta \wedge \omega_0=0$) we have

\[
\eta = a \frac{\Omega_0}{fg}
+ \lambda \frac{df}{f}
+ \mu \frac{dg}{g}
+ dh.
\]

The other cases are proven similarly.
\end{proof}

We say that two germs $f,g\in\mathcal{O}_m$ are in \emph{general position} if
$(df\wedge dg)(0)\neq 0$.
In particular, $f$ and $g$ are submersive at the origin $0\in \mathbb{C}^m$ and there is a local system of coordinates 
 $(x_1,x_2,..,x_m)\in \mathbb C^m,0$ with $x_1=f, x_2=g$. Two holomorphic functions 
 $f, g \in \mathcal O(U)$ for any open subset $U\subset \mathbb C^m$ are {\it in general position}
 if for every intersection point $\xi \in (f=0)\cap (g=0)$, the germms $f_\xi, g_\xi \in \mathcal O_{m, \xi}$ are in general position.

Analogously to Proposition~\ref{proposition:homology}, as in the proof of Proposition~\ref{proposition:cohomologyeqdimm} we can prove:
\begin{theorem}[Relative cohomology lemma]

Let \(\Omega \) be a germ of holomorphic 1-form at \(0 \in \mathbb C^m\).
Let \(f,g \in \mathcal O_m\) be germs in general position,
\(p,q \in \mathbb N\), \(\langle p,q \rangle = 1\).
\begin{enumerate}

\item Then the following conditions are equivalent:
\begin{enumerate}
\item
\[
d\Omega \wedge \Omega_0 = 0
\quad \text{and} \quad
\int_{\gamma_c} \Omega = 0,
\quad \forall \gamma_c \subset \phi_c.
\]

\item 
$\exists a,h \in \mathcal O_m
\quad \text{such that} \quad
\Omega = a \Omega_0 + dh.
$
\end{enumerate}

\item Similarly, the following conditions are equivalent:
\begin{enumerate}
\item 
\[
d\!\left(\frac{\Omega}{fg}\right) \wedge \Omega_0 = 0
\quad \text{and} \quad
\int_{\gamma_c} \frac{\Omega}{fg} = 0.
\]

\item
\[
\exists a,h \in \mathcal O_m
\quad \text{such that} \quad
\frac{\Omega}{fg}
=
a \frac{\Omega_0}{fg}
+ dh.
\]
\end{enumerate}
\end{enumerate}
\end{theorem}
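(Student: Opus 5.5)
The plan is to reduce to dimension two by restricting to generic planes, apply Proposition~\ref{proposition:homology} there, and then glue the two-dimensional solutions back together using the integrability condition $d\Omega\wedge\Omega_0=0$, following the scheme of the proof of Proposition~\ref{proposition:cohomologyeqdimm}.

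\textbf{Easy directions.} For (1)(b)$\Rightarrow$(1)(a) the argument is the one in Lemma~\ref{lemma:homologyglobal}. The form $\Omega_0$ vanishes on tangent vectors to the fibers $\phi_c$, so $\int_{\gamma_c} a\Omega_0=0$, and $\int_{\gamma_c} dh=0$. Moreover
\[
d\Omega\wedge\Omega_0=da\wedge\Omega_0\wedge\Omega_0+a\,d\Omega_0\wedge\Omega_0=a\,d\Omega_0\wedge\Omega_0 .
\]
This vanishes because $\Omega_0=fg\,\frac{d(f^p/g^q)}{f^p/g^q}$ up to sign, so $\Omega_0$ is integrable. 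Item (2)(b)$\Rightarrow$(2)(a) is handled identically, with $\frac{\Omega_0}{fg}$ in place of $\Omega_0$.

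\textbf{Main direction (1)(a)$\Rightarrow$(1)(b).} Choose coordinates $(x_1,\dots,x_m)$ with $x_1=f$ and $x_2=g$, which is possible by general position. For each parameter $s=(x_3,\dots,x_m)$ near $0$, consider the plane $\delta_s:(x,y)\mapsto(x,y,s)$. The pulled-back fibers $\delta_s^{-1}(\phi_c)$ are the curves $x^p/y^q=c$. The loops $\gamma_{(x_0,y_0)}$ used in Section~\ref{section:homoleq} lie in $\phi_c\cap\{x''=s\}$, so the homology hypothesis holds for $\delta_s^*\Omega$. Proposition~\ref{proposition:homology} then gives $\delta_s^*\Omega=a_s\omega_0+dh_s$.

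The explicit formula $h_{n,m}=\frac{qa_{n-1,m}+pb_{n,m-1}}{nq+mp}$ shows that $h_s$ can be chosen depending holomorphically on $s$, with normalization $h_s(0,0)=0$. This defines a holomorphic $H(x,y,s)$ with $(\Omega-dH)|_{\{s=\mathrm{const}\}}\wedge\omega_0=0$. In other words, the $dx_1,dx_2$-part of $\Omega-dH$ is a multiple $a\Omega_0$, so that
\[
\Omega-dH-a\Omega_0=\sum_{j\ge3}c_j\,dx_j .
\]

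\textbf{Correcting the transverse part.} This is the expected main obstacle. Set $\theta=\Omega-dH-a\Omega_0$. Using $d\Omega\wedge\Omega_0=0$ gives $d\theta\wedge\Omega_0=-da\wedge\Omega_0\wedge\Omega_0-a\,d\Omega_0\wedge\Omega_0=0$. Expanding this in the coordinates, each $c_j$ must satisfy $\partial_x c_j\,(qx)+\partial_y c_j\,(py)=\,$(terms forcing $c_j$ to be a function of $\phi$ and $s$). Since $c_j$ is holomorphic and $x^p/y^q$ is not, the resonance analysis of Section~\ref{section:cohomoeq} shows $c_j=c_j(s)$, with the only resonance at $(0,0)$. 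The condition $d\theta\wedge\Omega_0=0$ also forces $\sum_{j} c_j(s)\,ds_j$ to be closed, hence equal to $dk(s)$. Replacing $H$ by $H+k$ then yields $\Omega=a\Omega_0+dh$.

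Alternatively, the extension theorem of \cite{[Cerveau--Berthier]} can be invoked exactly as in Proposition~\ref{proposition:cohomologyeqdimm}, which gives this step directly.

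\textbf{Item (2).} Item (2) follows in the same way. Apply Proposition~\ref{proposition:cohomologyeqdimm} to $\eta=\Omega/fg$, which has simple poles on $\{fg=0\}$ and satisfies $d\eta\wedge\Omega_0=0$. This produces the residual terms $\lambda\frac{df}{f}+\mu\frac{dg}{g}+\cdots$. These are then killed by the lemma on $R$ in Section~\ref{section:homoleq}, giving $c=\tilde c=\alpha=\beta=0$ and $\lambda q+\mu p=0$. Finally, $\lambda\frac{df}{f}+\mu\frac{dg}{g}$ with $\lambda q+\mu p=0$ is a constant multiple of $\frac{\Omega_0}{fg}$, so it is absorbed into $a$.
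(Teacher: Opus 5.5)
\textbf{Part (1).} Your argument for part (1) is correct, and it differs from the paper's only in the last step. The paper restricts to one generic plane and then invokes the Cerveau--Berthier extension theorem. You instead solve $qx\,\partial_xH+py\,\partial_yH=qxA+pyB$ with holomorphic dependence on $s=(x_3,\dots,x_m)$ and correct the transverse part by hand. Done precisely, that computation gives exactly $qx\,\partial_xc_j+py\,\partial_yc_j=0$ (right-hand side $0$, not ``terms forcing\ldots'') together with $\partial_{s_k}c_j=\partial_{s_j}c_k$. So your route is self-contained and avoids the extension theorem. (For holomorphic $\Omega$ with $d\Omega\wedge\Omega_0=0$ the period hypothesis in (1) is in fact automatic, since the fibers are normalized by $\tau\mapsto(c'\tau^q,\tau^p)$.)

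\textbf{Part (2): the gap.} The gap is where you apply Proposition~\ref{proposition:cohomologyeqdimm} to $\eta=\Omega/fg$. With $a,h$ holomorphic that proposition is false, and so is the implication (2)(a)$\Rightarrow$(2)(b) itself. Take $\Omega=dg$, i.e.\ $\eta=dy/(xy)$ in your coordinates $x=f$, $y=g$.
\begin{itemize}
\item Since $d\eta=-dx\wedge dy/(x^2y)$, we get $d\eta\wedge\Omega_0=0$.
\item Take a fiber $x^p=cy^q$ with $c\neq0$; these are the only fibers on which $\eta$ is defined. Parametrize it by $x=c'\tau^q$, $y=\tau^p$ with $c'^p=c$, times the $s$-polydisc. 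Then $\eta=(p/c')\tau^{-q-1}d\tau$, which has zero residue, so every period vanishes.
\item But (2)(b) says $\Omega=a\Omega_0+fg\,dh$. The right-hand side vanishes at $0$, while $dg(0)\neq0$.
\end{itemize}
More generally, (2)(b) forces $\{f=0\}$ and $\{g=0\}$ to be $\Omega$-invariant, and (2)(a) does not.

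The source of the error is the normal form of Section~\ref{section:cohomoeq}. It silently drops non-resonant polar terms such as $a_{-1,-1}\,dx/(xy)$ and $b_{-1,-1}\,dy/(xy)$. These can only be absorbed with a meromorphic $h$, for instance $\frac{dy}{xy}=-\frac{1}{qx}\cdot\frac{\Omega_0}{fg}+d\bigl(-\frac{p}{qx}\bigr)$. The paper's own sketch goes through Proposition~\ref{proposition:homology} (i)$\Rightarrow$(iii) and has the same defect. Part (2) therefore cannot be proved as stated.

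\textbf{A version that does hold.} Add the hypothesis that $\Omega$ leaves $\{fg=0\}$ invariant, and your part (1) argument proves it verbatim. Write $\Omega=yA'\,dx+xB'\,dy+xy\sum_jC'_j\,ds_j$. The planar equation becomes $qx\,\partial_xH+py\,\partial_yH=qA'+pB'$. Its only obstruction is $(qA'+pB')(0,0,s)$, which equals $\frac{1}{2\pi i}\int_\gamma\eta$ over the loops $\gamma_{(x_0,y_0)}$ in the plane $\{s\}$. So in this version the period hypothesis is genuinely used. Alternatively, the identity above suggests keeping the hypotheses and allowing $a,h$ to have poles along $\{fg=0\}$.
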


\begin{proof}
The proof is similar to what we have already done: (1) reduction to dimension two, by  considering 
holomorphic embeddings $\tau \colon (\mathbb C^2, 0) \to (\mathbb C^m,0)$ in general position with respect to 
$\Omega_0=gdf - q f dg, f$ and $g$; (2) using Proposition~\ref{proposition:homology} we solve this local dimension $2$ case; (3) go back and solove the dimension $m \geq 2$ case, by applying  the extension theorem 
in \cite{[Cerveau--Berthier]}(Extension theorem p.405) considering the hypothesis $d \eta \wedge \Omega_0=0$ for $\eta=\Omega$ as in part (1) or $\eta=\Omega/fg$ as in part (2)).

\end{proof}
\section{First degree homogeneous deformations: the persistence theorem}

In this section we consider first degree deformations
$\Omega_t = \Omega_0 + t \Omega_1$
of
$
\Omega_0 = p g df - q f dg
$
by integrable 1-forms, where $t$ is a complex parameter.
Here, \(f,g\) are homogeneous polynomials in \(\mathbb C^m\),
\(p,q \in \mathbb N\), \(\langle p,q \rangle = 1\).

We have

\[
0 = \Omega_t \wedge d\!\left(\frac{\Omega_t}{fg}\right)
=
\left(\frac{\Omega_0 + t\Omega_1}{fg}\right)
\wedge
\left(d\frac{\Omega_0}{fg} + t\, d\frac{\Omega_1}{fg}\right).
\]

Since
\[
d\!\left(\frac{\Omega_0}{fg}\right)
=
d\!\left(\frac{p df}{f} - \frac{q dg}{g}\right)
=0,
\]
we obtain

\[
d\!\left(\frac{\Omega_1}{fg}\right) \wedge \Omega_0 = 0.
\]

Thus, by the previous proposition, we have

1st case \(p>1, q>1\):

\[
\frac{\Omega_1}{fg}
=
a \frac{\Omega_0}{fg}
+ \lambda \frac{df}{f}
+ \mu \frac{dg}{g}
+ dh.
\]

2nd case \(p=1<q\):

\[
\frac{\Omega_1}{fg}
=
a \frac{\Omega_0}{fg}
+ \lambda \frac{df}{f}
+ \mu \frac{dg}{g}
+ c \frac{g^{q-1}}{f} dg
+ dh.
\]

3rd case \(q=1<p\):

\[
\frac{\Omega_1}{fg}
=
a \frac{\Omega_0}{fg}
+ \lambda \frac{df}{f}
+ \mu \frac{dg}{g}
+ \tilde c \frac{f^{p-1}}{g} df
+ dh.
\]

Assume now that

\[
\int_{\gamma_c} \frac{\Omega_t}{fg} = 0,
\quad \forall \gamma_c \subset \phi_c.
\]

Then, on each case, we have
\[
\lambda q + \mu p = 0,
\quad
c=0,
\quad
\tilde c=0,
\quad
\alpha=\beta=0,
\]
the other cases being similar and we may state:
\begin{lemma}
For the integrable deformation $\Omega_t = \Omega_0 + t \Omega_1$, if 
$\int_{\gamma_c} \Omega_t=0, \forall \gamma_c \subset \phi_c, \forall c$ then 

\[
\frac{\Omega_t}{fg}
=
(1+\tilde a_t) \frac{\Omega_0}{fg}
+ dh.
\]

If, moreover, \(\Omega_t, f,g\) are homogeneous and
$
\deg \Omega_t = \deg \Omega_0 = \deg f + \deg g - 1,
$
then
\[
\Omega_t = c_t \Omega_0
\]
for some constant \(c_t \in \mathbb C\).
\end{lemma}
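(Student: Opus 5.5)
Starting from the decomposition just obtained, write $\Omega_1/(fg)$ in whichever of the four cases applies, using the relative cohomology proposition (Proposition~\ref{proposition:cohomologyeqdimm}). In each case this gives
\[
\frac{\Omega_1}{fg}=a\frac{\Omega_0}{fg}+R+dh ,
\]
where $R$ is the residual logarithmic and resonant part. Concretely,
\[
R=\lambda\frac{df}{f}+\mu\frac{dg}{g}+c\frac{g^{q-1}}{f}dg+\tilde c\frac{f^{p-1}}{g}df+\alpha\frac{df}{g}+\beta\frac{dg}{f},
\]
with the terms that do not occur in the given case set to zero.

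Next I integrate along the fibres. The hypothesis, read as in the paragraph before the statement (that is, for $\Omega_t/(fg)$), gives $\int_{\gamma_c}\Omega_t/(fg)=0$ for every closed path $\gamma_c\subset\phi_c$. Since $\Omega_0/(fg)$ restricts to zero on every fibre, this reduces to $t\int_{\gamma_c}\Omega_1/(fg)=0$, so $\int_{\gamma_c}\Omega_1/(fg)=0$ for $t\neq0$. The terms $a\,\Omega_0/(fg)$ and $dh$ integrate to zero along $\gamma_c$, because $\phi_c$ is $\Omega_0$-invariant and $dh$ is exact. Hence $\int_{\gamma_c}R=0$ for all $c$.

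I then apply the Lemma of Section~\ref{section:homoleq}. It gives $c=\tilde c=\alpha=\beta=0$ and $\lambda q+\mu p=0$. This last relation forces $\lambda\,df/f+\mu\,dg/g$ to be a constant multiple of $p\,df/f-q\,dg/g=\Omega_0/(fg)$: indeed $(\lambda,\mu)=(kp,-kq)$ with $k=\lambda/p$. Absorbing this constant into $a$, I set $\tilde a_t:=t(a+k)$ and replace $h$ by $th$, which yields
\[
\frac{\Omega_t}{fg}=(1+\tilde a_t)\frac{\Omega_0}{fg}+dh .
\]

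For the homogeneous statement I multiply by $fg$ and get $\Omega_t=(1+\tilde a_t)\Omega_0+fg\,dh$. The step I expect to be the main obstacle is ensuring that $fg\,dh$ is a polynomial form: $h$ is only a local germ, and it must be globalized. I would try first to globalize $h$ exactly as in the proof of Theorem~\ref{theorem:firsintegration}, extending along $S_f$ through the linearizable saddles and then using the Levi extension. In the homogeneous case there is a simpler route: I expand $h$ and $\tilde a_t$ into homogeneous components, using the $\mathbb C^*$-invariance of both the pencil and the data.

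Granting this, I compare homogeneous degrees. Since $\Omega_t$ and $\Omega_0$ are homogeneous of degree $\nu=\deg f+\deg g-1$, the argument of Lemma~\ref{lemma:normalformpolynomial} shows that only the constant term $a_0$ of $1+\tilde a_t$ survives. It also shows that the only part of $fg\,dh$ that survives is $fg\,dh_1$, with $h_1$ linear. But an $\Omega_0$-invariance/divisibility argument rules this out. The relation $d(\Omega_t/fg)\wedge\Omega_0=0$ implies that $dh_1\wedge d(f^p/g^q)=0$. A nonconstant linear $h_1$ cannot be a function of $f^p/g^q$, because $f^p/g^q$ is homogeneous of degree $p\deg f-q\deg g$ while $h_1$ has degree one; the exceptional equal-degree situation is excluded by the general position of $f$ and $g$. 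Therefore $dh_1=0$. This gives $\Omega_t=c_t\Omega_0$ with $c_t=a_0$ (cf.\ Lemma~\ref{lemma:aisconstant}).
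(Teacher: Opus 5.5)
Your first part is the paper's route, made explicit. You use the decomposition of $\Omega_1/(fg)$ from Proposition~\ref{proposition:cohomologyeqdimm}, the fibre integrals forcing $c=\tilde c=\alpha=\beta=0$ and $\lambda q+\mu p=0$, and then absorb $\lambda\,df/f+\mu\,dg/g=(\lambda/p)\,\Omega_0/(fg)$ into the coefficient of $\Omega_0/(fg)$, a step the paper leaves implicit. Your second part also follows the paper's idea, a comparison of homogeneous degrees. However, your degree count there is off by one, and the argument you use to repair the resulting spurious term is invalid.

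Write $h=\sum_i h_i$ in homogeneous components. The coefficients of $fg\,dh_i$ have degree $\deg f+\deg g+i-1=\nu+i$; the factor $fg$ shifts the count relative to Lemma~\ref{lemma:normalformpolynomial}, where $dh$ appears alone. Hence the degree-$\nu$ component of $(1+\tilde a_t)\Omega_0+fg\,dh$ is
\[
(1+\tilde a_t(0))\,\Omega_0+fg\,dh_0=(1+\tilde a_t(0))\,\Omega_0 ,
\]
and $\Omega_t=c_t\Omega_0$ with $c_t=1+\tilde a_t(0)$ follows at once; no linear $h_1$ ever survives. This matters because your argument for $dh_1=0$ does not work. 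By the first part, $d(\Omega_t/fg)=d\tilde a_t\wedge\Omega_0/(fg)$, since $d(\Omega_0/(fg))=0$. So $d(\Omega_t/fg)\wedge\Omega_0$ vanishes identically (in dimension two it is a $3$-form anyway). It therefore carries no information about $h$ and cannot yield $dh_1\wedge d(f^p/g^q)=0$. Replace that paragraph by the correct count.

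Your concern about globalizing $h$ is also unnecessary. The identity $\Omega_t=(1+\tilde a_t)\Omega_0+fg\,dh$ holds between convergent power series at the origin, and comparing homogeneous components of Taylor expansions is purely local. No extension along $S_f$, no Levi extension and no $\mathbb C^*$-invariance are needed. Since $\Omega_t$ and $\Omega_0$ are polynomial, the resulting identity of germs is an identity of polynomial forms. With the degree count corrected, your proof coincides with the paper's.
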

\begin{proof}
It only remains to prove the final part. Since 
$\deg \Omega_t = \deg \Omega_0 = \deg f + \deg g -1$, we conclude that 
$\tilde a_t$ is constant and $dh_t=0$. 
\end{proof}

\begin{proof}[Proof of Theorem~\ref{theorem:persistence}]

We first assume \(p>1, q>1\).
Then we may write

\[
\frac{\Omega_t}{fg}
=
a_t \frac{\Omega_0}{fg}
+ \lambda_t \frac{df}{f}
+ \mu_t \frac{dg}{g}
+ dh_t.
\]

Since \(\Omega_t\) is polynomial and \(\deg \Omega_t = \deg \Omega_0\),
we conclude that \(a_t\) is constant and \(dh_t=0\).

Hence

\[
\frac{\Omega_t}{fg}
=
a_t \frac{\Omega_0}{fg}
+ \lambda_t \frac{df}{f}
+ \mu_t \frac{dg}{g}.
\]

\[
=
a_t \left(p \frac{df}{f} - q \frac{dg}{g}\right)
+ \lambda_t \frac{df}{f}
+ \mu_t \frac{dg}{g}
\]

\[
=
(a_t p + \lambda_t)\frac{df}{f}
-
(a_t q - \mu_t)\frac{dg}{g}.
\]

Since we have \(0\) as a dicritical singularity of \(\Omega_t\),
we conclude that necessarily
\[
\frac{a_t p + \lambda_t}{a_t q - \mu_t}
=
\frac{m_t}{n_t}
\]
for some \(m_t, n_t \in \mathbb N\).

Thus

\[
\frac{\Omega_t}{fg}
=
c_t \left(
m_t \frac{df}{f}
-
n_t \frac{dg}{g}
\right)
=
c_t \, d \ln \left(\frac{f^{m_t}}{g^{n_t}}\right).
\]

Hence
\[
\phi_t = \frac{f^{m_t}}{g^{n_t}}
\]
is a rational first integral.
The other cases are quite similar, in view of Lemma~\ref{lemma:homologyglobal} and Propositions~\ref{proposition:homology} and ~\ref{proposition:cohomologyeqdimm}. 
\end{proof}


\section{Declarations:}

\noindent{\bf Funding}: Partial financial support was received from Fundação Getúlio Vargas (FGV-Rio de Janeiro), through the  The School of Applied Mathematics (EMAP-FGV).

\noindent{\bf Competing interests}: The authors have no competing interests to declare that are relevant to the content of this article.


\bibliographystyle{amsalpha}

\end{document}